\def\ZZ{{\mathbb{Z}}}% \ZZ == \mathbb{Z}
\def\QQ{{\mathbb{Q}}}% \QQ == \mathbb{Q}
\def\RR{{\mathbb{R}}}% \RR == \mathbb{R}
\def\C{{\mathscr{C}}}% \C == \mathscr{C}
\def\CC{{\mathbb{C}}}% \CC == \mathbb{C}
\def\K{{\mathscr{K}}}
\def\PP{{\mathbb{P}}}% \PP == \mathbb{P}
\def\E{{\mathscr{E}}}% \E == \mathscr{E}
\def\H{{\mathscr{H}}}% \H == \mathscr{H}
\def\M{{\mathscr{M}}}% \M == \mathscr{M}
\def\V{{\mathscr{V}}}% \V == \mathscr{V}
\newtheorem{them}{Theorem}[section]
\newtheorem{pro}[them]{Proposition}
\newtheorem{lem}[them]{Lemma}
\newtheorem{rem}[them]{Remark}
\newtheorem{cor}[them]{Corollary}
\newtheorem{nota}[them]{Notation}
\newtheorem{cl}[them]{Claim}
\begin{document}

\title[On projective manifolds swept out by cubic varieties]{On projective manifolds swept out by cubic varieties}
\author{Kiwamu Watanabe}
\date{\today}

\address{Graduate School of Mathematical Sciences, University of Tokyo, 
3-8-1 Komaba Meguro-ku Tokyo 153-8914, Japan.}
\email{watanabe@ms.u-tokyo.ac.jp}

\subjclass[2000]{Primary~14J40, 14M99, 14N99, Secondary~14D99, 14E30.}
\keywords{swept out by cubic varieties, covered by lines, deformation of rational curves, contraction of an extremal ray, Hilbert scheme.}

\maketitle

%%%%%%%%%%%%%%%%%%%%%%%%%%%%%%%%%%%%%%%%%%%%%%%%%%%%%%%%%%%%%

\begin{abstract}
We study structures of embedded projective manifolds swept out by cubic varieties. We show if an embedded projective manifold is swept out by high-dimensional smooth cubic hypersurfaces, then it admits an extremal contraction which is a linear projective bundle or a cubic fibration. As an application, we give a characterization of smooth cubic hypersurfaces. We also classify embedded projective manifolds of dimension at most five swept out by copies of the Segre threefold $\PP^1 \times \PP^2$.  In the course of the proof, we classify projective manifolds of dimension five swept out by planes.     
\end{abstract}

\tableofcontents

\section{Introduction}

The structures of embedded complex projective manifolds swept out by varieties with preassigned properties have been studied by several authors. For example, E. Sato showed that projective manifolds covered by linear subspaces of dimension at least $[\frac{n}{2}]+1$ are linear projective bundles \cite{S}. Recently, M. C. Beltrametti and P. Ionescu proved that projective manifolds swept out by quadrics of dimension at least $[\frac{n}{2}]+2$ are either linear projective bundles or quadric fibrations \cite{BI} (see also \cite{KS} due to Y. Kachi and Sato). Their result recovers B. Fu's one on a characterization of quadrics \cite{Fu}. Remark that related problems were also dealt in \cite{MS}, \cite{Wat}. 

In this paper, we investigate structures of embedded projective manifolds swept out by cubic varieties. 
Recall that an embedded projective manifold of degree $3$ is a cubic hypersurface, the Segre $3$-fold $\PP^1 \times \PP^2 \subset \PP^5$ or one of its linear sections, that is, either a cubic scroll or a twisted cubic curve \cite{X}. By a slight abuse of terminology, we say that an embedded projective manifold is a {\it hypersurface}     
 if it has codimension $1$ in its linear span. 
We prove the following:

\begin{them}\label{hyp} Let $X \subset \PP^N$ be a complex projective manifold of dimension $n \geq 5$. Assume that there is a smooth cubic hypersurface in $X$ of dimension $s > [\frac{n}{2}]+2$ through a general point of $X$. Then $X$ admits a contraction of an extremal ray $\varphi  :X \rightarrow Y$ which contracts the cubic hypersurfaces and it is a linear projective bundle or a cubic fibration (, which means a fibration whose general fibers are cubic hypersurfaces). 
\end{them}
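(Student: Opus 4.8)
The strategy is to pass from the family of cubic hypersurfaces to the family of lines they contain, to observe that this family of lines has an unusually large variety of minimal rational tangents, and then to imitate the Mori-theoretic argument of Beltrametti--Ionescu \cite{BI} for quadrics. \emph{From cubics to lines:} since $s>[\frac{n}{2}]+2$ forces $s\ge 5$, a general cubic hypersurface $Z\subset X$ of dimension $s$ is smooth and is covered by lines of $\PP^N$ lying on $X$, and by the ``general point of a general member'' principle the point $x$ may be chosen general in $X$ and also in a cubic $Z\ni x$. Thus $X$ carries an unsplit dominating family $\M$ of lines -- the component parametrising lines lying on the cubics through general points -- with a single numerical class $[\ell]$ and variety of minimal rational tangents $\C_x\subset\PP(T_xX)$ at general $x$. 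Writing the equation of $Z$ at $x$ in the form $x_0^2L+x_0Q+C=0$ exhibits $\C_x$ as containing the variety of minimal rational tangents $W_x$ of $Z$ at $x$, a smooth complete intersection of a quadric and a cubic of dimension $s-3$, linearly nondegenerate in a linear subspace $\PP^{s-1}\subset\PP(T_xX)$. Hence
\[
p:=\dim\C_x\ \ge\ s-3\ \ge\ \Bigl[\tfrac{n}{2}\Bigr],\qquad -K_X\cdot\ell\ =\ p+2\ \ge\ \Bigl[\tfrac{n}{2}\Bigr]+2 .
\]

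\emph{The extremal contraction.} Because $\dim\mathrm{Locus}(\M_x)=p+1$ and $2(p+1)>n$, the loci of $\M$-lines through two general points of $X$ meet, so $X$ is chain connected by $\M$ with chains of length at most two; combining this with the unsplitness of $\M$ as in \cite{BI} produces a $K_X$-negative extremal ray $R$ with $[\ell]\in R$. Its contraction $\varphi=\mathrm{cont}_R\colon X\to Y$ is of fibre type, since the $\M$-lines cover $X$ and are all contracted by $\varphi$; by the Ionescu--Wi\'sniewski inequality its general fibre $F$ -- which is smooth -- has dimension at least $\ell(R)-1=p+1$, and every cubic $Z$, being connected and swept out by $\M$-lines, lies in a fibre of $\varphi$.

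\emph{The general fibre.} Here $F$ contains the smooth cubic hypersurface $Z$ of dimension $s$, so $\dim F\ge s$; moreover $F$ is covered by lines (of degree one for $\mathcal O_{\PP^N}(1)|_F$) and, being a fibre of $\mathrm{cont}_R$, is chain connected by them, so its variety of minimal rational tangents still contains the $(2,3)$-complete intersection $W_x$ spanning $\PP^{s-1}$. If $\dim F=s$ then $Z=F$, a smooth cubic $s$-fold. If $\dim F=s+1$, then from the inclusion $W_x\subset\C^F_x$ together with adjunction/scroll-type considerations (equivalently, the projective geometry of the second fundamental form) one deduces $F\cong\PP^{s+1}$, with $Z$ a cubic hypersurface in it. The cases $\dim F\ge s+2$, and $\dim F\in\{s,s+1\}$ with $F$ of any other type, are ruled out by a dimension count for the lines on $F$ together with the low-dimensional classifications established earlier in the paper, notably the classification of five-folds swept out by planes applied to suitable linear sections of $F$. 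Thus the general fibre of $\varphi$ is $\PP^{s+1}$ or a smooth cubic $s$-fold, uniformly over $Y$; in the first case $\varphi$ is a linear $\PP^{s+1}$-bundle by the standard structure theory of contractions with projective-space fibres, and in the second it is a cubic fibration, and in both cases it contracts the cubic hypersurfaces.

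\emph{Expected main obstacle.} The heart of the matter is the identification of $F$ in the third step: promoting ``the variety of minimal rational tangents of $F$ contains a large complete intersection of a quadric and a cubic'' to ``$F$ is a projective space or a smooth cubic hypersurface'', and excluding the intermediate possibilities. By comparison, producing the extremal ray $R$ and checking that $\varphi$ is of fibre type is the routine part, once the size estimate $-K_X\cdot\ell\ge[\frac{n}{2}]+2$ is in hand.
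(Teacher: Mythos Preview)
Your overall architecture --- pass to the covering family of lines on the cubics, bound the VMRT dimension by $s-3$, produce an extremal ray, and then analyse the general fibre $F$ --- matches the paper. Your description of $W_x=F_1(x,Z)\subset\PP^{s-1}$ as a nondegenerate $(2,3)$-complete intersection of dimension $s-3$ is exactly what the paper uses, and your estimate $-K_X\cdot\ell\ge[\tfrac{n}{2}]+2$ agrees with inequality~(\ref{mi}). The extremal ray is obtained in the paper via the Novelli--Occhetta criterion (Theorem~\ref{NO}) rather than the chain-connectedness argument you sketch, but this is a minor variation.

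The gap is in the fibre identification, and it is not just vagueness but a structural error. You organise the argument as a case split on $\dim F\in\{s,s+1\}$ versus $\dim F\ge s+2$, asserting that the latter is ``ruled out''. It is not. In the projective-space outcome one has $-K_F=(f+1)H|_F$, so $f=m-1$ where $m=-K_X\cdot\K$; since $m$ is only bounded \emph{below} by $s-1$, the dimension $f$ can exceed $s+1$ (indeed the paper only records $f\ge s+1$ and then invokes Ein's Theorem~\ref{ein}). So the dichotomy you set up is false, and no amount of ``dimension count for the lines on $F$'' will repair it. Your appeal to ``the classification of five-folds swept out by planes applied to suitable linear sections of $F$'' is also misplaced: Theorem~\ref{plane} is proved \emph{after} Theorem~\ref{hyp}, is not used in its proof, and in any case concerns $5$-folds, not linear sections of an $f$-fold with $f$ unbounded.

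What the paper actually does at this step --- and what replaces your hand-wave about ``adjunction/scroll-type considerations'' --- is a single application of Zak's inequality (Proposition~\ref{Zak}) inside $\PP_\ast(T_xF)$. One first shows (via Theorem~\ref{Wi}) that $\rho_F=1$, then that the VMRT $\mathscr{L}_x\subset\PP_\ast(T_xF)$ of a covering family of lines on $F$ is smooth, irreducible and nondegenerate (Proposition~\ref{VMRT}), of dimension $r_F-2$. Your subvariety $W_x$ sits inside $\mathscr{L}_x$ with ${\rm codim}_{\langle W_x\rangle}W_x=2$; Zak's inequality then forces $2\ge{\rm codim}_{\PP_\ast(T_xF)}\mathscr{L}_x=f+1-r_F$, i.e.\ $r_F\ge f-1$. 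Now Kobayashi--Ochiai and the del Pezzo classification (Theorems~\ref{KO} and~\ref{del}) list the possibilities for $F$, and a second use of Zak together with an elementary linear-span argument eliminates $Q^f$, $G(2,\CC^5)$-sections and $(2,2)$-intersections, leaving $\PP^f$ (linear projective bundle via Theorem~\ref{ein}) or a cubic hypersurface. You correctly flagged this step as ``the heart of the matter''; the missing idea is precisely this use of Zak's inequality to bound the Fano index of $F$ rather than its dimension.
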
 

We immediately obtain a characterization of smooth cubic hypersurfaces as a corollary. This is a cubic version of the theorems on characterizations of quadrics due to Kachi-Sato \cite[Main Theorem~0.2]{KS}, Fu \cite[Theorem~2]{Fu} and Beltrametti-Ionescu \cite[Corollary~3.2]{BI}:

\begin{cor}\label{characubic} Let $X \subset \PP^N$ be a nondegenerate complex smooth projective $n$-fold with $5 \leq n <N$. Then the following are equivalent:
\begin{enumerate}
\item $X$ is a cubic hypersurface.
\item Through a general point $x \in X$, there is a smooth cubic hypersurface $S_x \subset X$ of dimension $s > [\frac{n}{2}]+2$ and the cubics $S_x$ and $S_{x'}$ intersect for two general points $x,x' \in X$.
\end{enumerate}
\end{cor}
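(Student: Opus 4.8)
The plan is to deduce the corollary from Theorem~\ref{hyp}; the implication (i)$\Rightarrow$(ii) is essentially trivial and all the content lies in (ii)$\Rightarrow$(i).

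For (i)$\Rightarrow$(ii): if $X\subset\PP^N$ is a cubic hypersurface, then by the convention on the word ``hypersurface'' recalled in the introduction together with nondegeneracy we have $\langle X\rangle=\PP^{n+1}=\PP^N$ (in particular $N=n+1$). One may simply take $S_x=X$ for every $x\in X$: this is a smooth cubic hypersurface of dimension $s=n$ passing through $x$, and $S_x\cap S_{x'}=X\neq\emptyset$ for all $x,x'$. The only point to record is the numerical inequality $n>[\tfrac{n}{2}]+2$, equivalently $\lceil\tfrac{n}{2}\rceil>2$, which holds precisely because $n\geq 5$.

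For (ii)$\Rightarrow$(i): the first half of (ii) is exactly the hypothesis of Theorem~\ref{hyp}, so we obtain an extremal contraction $\varphi\colon X\to Y$ that contracts the cubic hypersurfaces $S_x$ and is either a linear projective bundle or a cubic fibration. Since $\varphi$ contracts $S_x$, we have $S_x\subseteq\varphi^{-1}(\varphi(x))$ for every $x$ in the dense open subset $U\subseteq X$ over which the family $\{S_x\}$ is defined. We then bring in the second half of (ii): if $\dim Y\geq 1$, the preimage under $\varphi\times\varphi$ of the diagonal of $Y\times Y$ is a proper closed subset of $X\times X$, so a general pair $(x,x')\in U\times U$ maps to two distinct points of $Y$; as distinct fibres of $\varphi$ are disjoint, this forces $S_x\cap S_{x'}\subseteq\varphi^{-1}(\varphi(x))\cap\varphi^{-1}(\varphi(x'))=\emptyset$, contradicting the assumption that $S_x$ and $S_{x'}$ meet for a general pair of points. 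Hence $Y$ is a point. It remains to eliminate the linear projective bundle case: a linear projective bundle over a point is a linearly embedded $\PP^n\subset\PP^N$, which would be degenerate since $n<N$, contrary to hypothesis. Therefore $\varphi$ is a cubic fibration over a point, that is, its unique fibre $X$ is a cubic hypersurface; it is smooth by assumption, and nondegeneracy again gives $\langle X\rangle=\PP^{n+1}=\PP^N$, so $X$ is a smooth cubic hypersurface, which is~(i). I do not expect a genuine obstacle once Theorem~\ref{hyp} is available; the only step calling for a little care is the passage from ``general point'' to ``general pair of points'', which is settled by intersecting the relevant dense open subsets of $X$ and of $X\times X$.
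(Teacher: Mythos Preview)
Your proposal is correct and is precisely the intended argument. The paper does not write out a separate proof of Corollary~\ref{characubic}; it simply declares that the result is obtained ``immediately'' from Theorem~\ref{hyp}, and your deduction---using the intersection hypothesis to force $Y$ to be a point, then ruling out $\PP^n$ by nondegeneracy---is exactly that immediate step.
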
 

We also classify embedded projective manifolds of dimension at most $5$ swept out by copies of the Segre $3$-fold $\PP^1 \times \PP^2$:

\begin{them}\label{Segre3} Let $X \subset \PP^N$ be a complex projective manifold of dimension $n \leq 5$. Assume that there is a copy of the Segre $3$-fold $\PP^1 \times \PP^2$ through a general point of $X$ in $X$. Then $X$ is one of the following:
\begin{enumerate}
\item a linear $\PP^d$-bundle, where $d \geq 2$,
\item a hyperplane section of the Grassmann variety $G(2,\CC^5) \subset \PP_{\ast}(\bigwedge^2\CC^5)$, 
\item $\PP^1 \times Q^4 \subset \PP^{11}$, or
\item $X$ admits a contraction of an extremal ray $\varphi  :X \rightarrow C$ to a smooth curve whose general fiber is projectively equivalent to a Segre $4$-fold $\PP^2 \times \PP^2 \subset \PP^8$.
\end{enumerate}
\end{them}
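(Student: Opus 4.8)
The plan is to run the same Hilbert-scheme-of-cubics machinery that underlies Theorem~\ref{hyp}, but now with the cubic threefold $\PP^1\times\PP^2$ playing the role of the sweeping subvariety, and to push the resulting extremal contraction through the very short list of low-dimensional possibilities. First I would fix an irreducible component $\H$ of the Hilbert scheme parametrizing the copies of the Segre threefold contained in $X$ and passing through a general point $x$, and study the universal family $p\colon\U\to\H$, $q\colon\U\to X$. Since $\dim X\le 5$ and the sweeping subvarieties are $3$-dimensional, two general members $S_x$ and $S_{x'}$ must meet (a dimension count: if $n\le 5$ and $\dim S=3$, then generic members through two general points intersect); this is exactly the hypothesis that gave gluing in the quadric and cubic cases, so it lets me conclude that the cubics sweeping out $X$ form a ``connecting family'' and hence span a single extremal ray $R$ in $\overline{NE}(X)$. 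I would then invoke the structure of the contraction $\varphi\colon X\to Y$ of $R$: the general fibre $F$ contains, or is swept out by, copies of $\PP^1\times\PP^2$, so $\dim F\ge 3$, and $Y$ has dimension $0$, $1$ or $2$.

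Next I would treat the cases according to $\dim Y$. If $\dim Y=0$ then $X$ itself is a Fano manifold of dimension $\le 5$ with Picard number one swept out by Segre threefolds; using adjunction along a minimal rational curve inside the ruling of each $\PP^1\times\PP^2$ (which carries two rulings of lines, so $X$ is covered by lines and conics) I would bound the index and pseudoindex of $X$ and match against the classification of Fano manifolds of large index/coindex. This should force $X$ to be $\PP^5$ or a smooth quadric $Q^5$ (cases already inside (i), or excluded because $Q^5$ does not actually contain $\PP^1\times\PP^2$ as a linear section) or a del~Pezzo/Mukai manifold, and the requirement that it genuinely contains a copy of $\PP^1\times\PP^2$ through a general point singles out $G(2,\CC^5)$ and its hyperplane section, giving (ii), and $\PP^1\times Q^4$, giving (iii). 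If $\dim Y=1$, then $Y$ must be a smooth curve and the general fibre $F$ is a smooth projective $4$-fold swept out by $\PP^1\times\PP^2$'s with Picard number one (it has its own extremal contraction that is trivial since $F$ is a fibre), so by the $\dim Y=0$ analysis applied one dimension down, $F$ is forced to be the Segre fourfold $\PP^2\times\PP^2\subset\PP^8$; this is exactly outcome (iv). If $\dim Y=2$, then $\dim F=3$ and every fibre is itself (birational to, hence equal to) $\PP^1\times\PP^2$, so $\varphi$ is a $\PP^1\times\PP^2$-fibration, which one checks forces $X$ to be a projective bundle, landing in (i) (after possibly a further reduction of $Y$).

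The key input I would lean on is the classification of projective $5$-folds swept out by planes (promised in the abstract and presumably established earlier), since both rulings of $\PP^1\times\PP^2$ produce a $2$-dimensional family of planes through a general point: this reduces many of the subcases to known lists and is what makes the numerology tractable in dimension $5$. The main obstacle, I expect, is the borderline situation $\dim Y\in\{0,1\}$ where $X$ (or the general fibre $F$) has Picard number one but is not a priori a hypersurface --- there one cannot appeal to Theorem~\ref{hyp} directly because $\PP^1\times\PP^2$ is a cubic \emph{threefold} of dimension $3\le[\frac{n}{2}]+2$, well below the range of that theorem, so the bound on the family of cubics is just barely too weak and one must instead run a bespoke argument with minimal rational curves and the second fundamental form / second osculating space of $X$ along the lines of the two rulings to pin down the Fano type. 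Carefully ruling out spurious Fano fivefolds of coindex $\le 3$ that happen to be covered by planes but do not contain $\PP^1\times\PP^2$ subvarieties --- and checking that the ones that survive embed with the right hyperplane class so that the copies are genuine linear (Segre) embeddings --- is where the real work lies.
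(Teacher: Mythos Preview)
Your proposal has a genuine structural gap at the very first step. Extremal rays live in the cone of \emph{curves}, so saying that ``the cubics sweeping out $X$ \ldots\ span a single extremal ray $R$'' is not well-posed: the Segre threefold $\PP^1\times\PP^2$ has Picard number $2$, and its two rulings give two numerically \emph{distinct} classes of lines in $X$. There is no reason these should lie on a common ray, and in fact the paper's whole mechanism rests on their being different. The paper constructs two covering families of lines, $\K_1$ (lines of the second type, lying in the planes $\{\mathrm{pt}\}\times\PP^2$) and $\V_1$ (lines of the first type), shows each spans an extremal ray via Theorems~\ref{NO} and~\ref{NO2}, and then plays the two resulting contractions $\varphi_{\K_1}$, $\varphi_{\V_1}$ against each other. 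The contraction $\varphi_{\K_1}$ collapses the planes, so Theorem~\ref{plane} applies and gives the short list; the second contraction $\varphi_{\V_1}$ is what singles out $\PP^1\times Q^4$ in the quadric-fibration case (it forces a $\PP^1$-bundle structure over $\PP^4$ or $Q^4$, and then the rank-two Fano bundle classification finishes it).

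This gap propagates into errors in your case analysis. You place $\PP^1\times Q^4$ under ``$\dim Y=0$, $\rho_X=1$'', but $\PP^1\times Q^4$ has Picard number $2$; in the paper it arises precisely when the two extremal rays $\RR_{\ge 0}[\K_1]$ and $\RR_{\ge 0}[\V_1]$ are different. Likewise you assert that in the $\dim Y=1$ case the general fibre has Picard number one, yet the actual outcome there is $F\cong\PP^2\times\PP^2$, which has $\rho_F=2$. Finally, your remark that ``both rulings of $\PP^1\times\PP^2$ produce a $2$-dimensional family of planes'' is false: only the fibres of the first projection are planes, and it is exactly this asymmetry between the two rulings that makes the two-ray argument work.
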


Remark that there are many examples of projective $n$-folds swept out by copies of the Segre $3$-fold $\PP^1 \times \PP^2$ in the case where $n>5$, e.g., manifolds swept out by $5$-dimensional linear subspaces. From this viewpoint, the assumption that $n \leq 5$ seems natural. 

The strategy to show Theorem~\ref{characubic} and Theorem~\ref{Segre3} is as follows: The manifold $X$ as in Theorem~\ref{characubic} and Theorem~\ref{Segre3} is covered by lines. We produce a large covering family of lines based on deformation theory of rational curves. It turns out that the family defines a contraction of an extremal ray. Then a general fiber is a Fano manifold of coindex at most $3$. Investigating the fiber by using classification results of Fano manifolds, we classify $X$ as in Theorem~\ref{characubic} and Theorem~\ref{Segre3}. 

The contents of this paper are organized as follows. In Section~2, we recall some facts on deformation theory of rational curves and explain results on extremal contractions and Fano manifolds. In particular, we review two results due to C. Novelli and G. Occhetta (Theorem~\ref{NO} and Theorem~\ref{NO2}) which play an important role in our proof. It asserts that a large covering family of lines spans an extremal ray of the Kleiman-Mori cone. Section~3 is devoted to prove Theorem~\ref{hyp}. The idea of our proof is analogous to one of \cite{BI}. One of significant steps is to show the existence of a covering family of lines induced from cubic hypersurfaces (Lemma~\ref{famcubic}). Section~4 deals with a classification of projective $5$-folds swept out by planes (Theorem~\ref{plane}). In \cite[Main Theorem]{S}, Sato classified projective manifolds swept out by linear subspaces of dimension at least $[\frac{n}{2}]$. In this sense, our case is a next case to the one in \cite[Main Theorem]{S}. Although a related topic was dealt in \cite[Theorem~1.1, Question~6.2]{NO}, our result does not follow from theirs. Actually, we give a negative answer to \cite[Question~6.2]{NO} in Remark~\ref{NOQ}. More precisely, we give an example of an embedded projective manifold of dimension $2n+1$ which is swept out by linear subspaces of dimension $n$ but does not contain a linear subspace of dimension $n$ with nef normal bundle. In Section~5, we prove Theorem~\ref{Segre3}. The main difficulty lies in the case where the dimension of $X$ is $5$. Since projective manifolds swept out by copies of the Segre $3$-fold $\PP^1 \times \PP^2$ are also swept out by planes, we can narrow the possibilities of manifolds by using Theorem~\ref{plane}. Furthermore we also investigate a contraction of an extremal ray which is defined by a covering family of lines induced from lines on the first factors of copies of the Segre $3$-fold $\PP^1 \times \PP^2$. 

In this paper, we work over the field of complex numbers. \\  

{\bf Acknowledgements.} 
The author would like to express his deep gratitude to Professor Hajime Kaji for his valuable suggestions and advice. The author is also greatly indebted to Professor Hiromichi Takagi for reading a preliminary version of this paper and his several helpful comments. In particular, he taught the author a part of the proof of Proposition~\ref{P2} (Remark~\ref{takagi}). The author would also like to thank the referee for careful reading and useful comments which simplified proofs of Proposition~\ref{sch} and \ref{fb}.
He would like to thank Dr. Katsuhisa Furukawa, Dr. Yoshinori Gongyo, Dr. Shinpei Ishii and Dr. Daizo Ishikawa for their encouragement during my studies. He is partially supported by Research Fellowships of the Japan Society for the Promotion of Science for Young Scientists.

\section{Preliminaries}

We use the notation and terminology as in \cite{BS}, \cite{H}, \cite{Hw2}, \cite{Ko}. For a vector space $V$, $\PP_{{\ast}}(V)$ denotes the projective space of lines through the origin in $V$. 

\subsection{Families of lines and varieties of minimal rational tangents}\label{l}

Let $X \subset \PP^N$ be a smooth projective variety of dimension $n$, which is called a {\it projective manifold of dimension $n$} or a {\it smooth projective $n$-fold}. For an $m$-tuple of numerical polynomials $\PP(t):=(P_1(t),\cdots,P_m(t))$, we denote by ${\rm FH}_{\PP(t)}(X)$ the flag Hilbert scheme relative to $\PP(t)$, which parametrizes $m$-tuples $(Z_1, \cdots, Z_m)$ of closed subschemes of $X$ such that $Z_i$ has Hilbert polynomial $P_i(t)$ and $Z_1 \subset \cdots \subset Z_m$ (see \cite[Section~4.5]{Ser}). 
In particular, for a numerical polynomial $P(t) \in \QQ[t]$, we denote by ${\rm Hilb}_{P(t)}(X)$ the Hilbert scheme relative to $P(t)$. We also denote by $F_1(X)$ the Hilbert scheme of lines on $X$. An irreducible component of $F_1(X)$ is called a {\it family of lines} on $X$. For a point $x \in X$, we denote by $F_1(x,X)$ the Hilbert scheme of lines in $X$ passing through $x$. 
Let ${\rm Univ}(X)$ be the universal family of ${\rm Hilb}(X)$ with the associated morphisms $\pi :{\rm Univ}(X) \rightarrow {\rm Hilb}(X)$ and $\iota : {\rm Univ}(X) \rightarrow X$. 
For a subset $V$ of ${\rm Hilb}(X)$, $\iota ({\pi}^{-1}(V))$ is denoted by ${\rm Locus}(V) \subset X$. 
A {\it covering family of lines} $\K$ means an irreducible component of $F_1(X)$ satisfying ${\rm Locus}(\K)=X$. For a family of lines $\K$, since all members of $\K$ are numerically equivalent, we can define the intersection number of $\K$ with a divisor $D$ on $X$ as the one of $D$ and a member of $\K$ and we denote it by $D.\K$. For a subset $\K \subset F_1(X)$, a {\it $\K$-line} means a line which is a member of $\K$.

From here on, assume that $X \subset \PP^N$ is covered by lines. A line $l$ is {\it free} if the restriction $T_X|_l$ of the tangent bundle of $X$ to $l$ is nef. If a line $l$ is free, $F_1(X)$ is smooth at $[l]$ (see \cite[Theorem~4.3.5]{Ser}). In particular, $F_1(X)$ has a unique irreducible component containing $[l]$. A family of lines $\K$ is a covering family if and only if $\K$ contains a free line (see \cite[II. Corollary~3.5.3 and Proposition~3.10]{Ko}). 
For a covering family of lines $\K$ on $X$, there exists a minimal proper closed subset $Z_{\K} \subset X$ such that every line through a point of $X \setminus Z_{\K}$ is free. 
In fact, since being free is an open condition and $\K$ is proper, the locus of non-free lines in $\K$ is closed and it is exactly $Z_{\K}$. 
 We call $Z_{\K}$ the {\it non-free locus of $\K$}.  
Let $x$ be a point on $X \setminus Z_{\K}$ and set $\K_x:=\{[l] \in \K|x \in l\}$.
For any $\K_x$-line $l$, the tangent bundle $T_X$ satisfies $T_X|_l \cong {\mathscr{O}}_{\PP^1}(2) \oplus {\mathscr{O}}_{\PP^1}(1)^{p} \oplus {\mathscr{O}}_{\PP^1}^{n-p-1}$ for some non-negative integer $p$. From a fundamental argument of deformation theory of rational curves, $\K_x$ is a disjoint union of smooth varieties of dimension $p$ (see \cite[Theorem~1.3]{Hw2}). We define the tangent map ${\tau}_x : F_1(x,X) \rightarrow \PP_{{\ast}}(T_xX)$ by assigning the tangent vector at $x$ to each member of $F_1(x,X)$. 
It is known that the tangent map ${\tau}_x$ is an embedding (see \cite[Proposition~1.5]{Hw2}). Hence $F_1(x,X)$ can be seen as a (possibly disconnected) subvariety of $\PP_{{\ast}}(T_xX)$. We denote by $\C_x \subset \PP_{{\ast}}(T_xX)$ the image ${\tau}_x(\K_x)$, which is called the {\it variety of minimal rational tangents} (with respect to $\K$) at $x$. The following proposition is a fundamental result on varieties of minimal rational tangents.

\begin{pro}[{\cite[Proposition~1.5, Theorem~2.5, Section~5~Question~2]{Hw2}}]\label{VMRT} Let $X \subset \PP^N$ be a smooth projective $n$-fold covered by lines, $\K$ a covering family of lines on $X$ and $\C_x$ its variety of minimal rational tangents at a general point $x \in X$. Assume that ${\rm Pic}(X) \cong \ZZ[{\mathscr{O}}_X(1)]$. If $\dim \C_x \geq \frac{n-1}{2}$, then $\C_x \subset \PP_{{\ast}}(T_xX)$ is smooth, irreducible and nondegenerate.
\end{pro}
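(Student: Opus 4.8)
The plan is to extract smoothness of $\C_x$ formally from the results already recalled and to deduce irreducibility and non-degeneracy by studying the distribution on $X$ generated by the affine varieties of minimal rational tangents, in the spirit of Hwang--Mok. First observe that $X$ is a Fano manifold: as $\mathrm{Pic}(X)\cong\ZZ[\O_X(1)]$ has rank one and $-K_X\cdot\K>0$, the class $-K_X$ is a positive multiple of $\O_X(1)$, hence ample; in particular $\pi_1(X)=1$. Since $\tau_x$ is an embedding and $\K_x$ is a disjoint union of smooth irreducible varieties for general $x$ (both recalled above), $\C_x=\tau_x(\K_x)$ is automatically smooth, and its irreducible components are \emph{pairwise disjoint} smooth subvarieties of $\PP_{{\ast}}(T_xX)\cong\PP^{n-1}$. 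As two irreducible subvarieties of $\PP^{n-1}$ whose dimensions sum to at least $n-1$ necessarily meet, the hypothesis $\dim\C_x\geq\frac{n-1}{2}$ forces $\C_x$ to have exactly one component $\C^{0}$ of dimension $p:=\dim\C_x$, every other component having dimension strictly less than $n-1-p\leq\frac{n-1}{2}$. It therefore remains to exclude the small components (irreducibility) and to prove that $\C^{0}$ spans $\PP_{{\ast}}(T_xX)$ (non-degeneracy).

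For this I would pass to the dense open subset $X^{\circ}$ of general points of $X$ and form the subsheaf $\mathscr{W}\subset T_X|_{X^{\circ}}$ whose fibre at $x$ is the linear span of the affine cone over $\C^{0}$; by upper semicontinuity $\mathscr{W}$ is a subbundle of some rank $k$ on a (possibly smaller) open set, and every general $\K$-line $\ell$ is everywhere tangent to $\mathscr{W}$, because $[T_y\ell]\in\C_y$ for every $y\in\ell$. Along such an $\ell$ one checks that $\mathscr{W}|_\ell$ contains the positive part $\O_{\PP^1}(2)\oplus\O_{\PP^1}(1)^{p}$ of $T_X|_\ell$, so $\det\mathscr{W}\cdot\K>0$ and, as $\mathrm{Pic}(X)\cong\ZZ[\O_X(1)]$, $\det\mathscr{W}$ is ample. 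Now $k-1\geq p\geq\frac{n-1}{2}$, hence $T_X/\mathscr{W}$ has rank $n-k\leq\frac{n-1}{2}$, and the key point is that this smallness forces the Frobenius bracket $\wedge^{2}\mathscr{W}\to T_X/\mathscr{W}$ to vanish identically on $\{v\wedge w:v,w\in\widehat{\C^{0}_x}\}$, where $\widehat{\C^{0}_x}$ is the affine cone over $\C^{0}$: this is a Zak-type rigidity statement, the point being that along the variety of minimal rational tangents this bracket is governed by the projective second fundamental form of $\C_x$, which is too degenerate to be non-zero once $\dim\C_x$ is this large relative to its span. Since $\widehat{\C^{0}_x}$ spans $\mathscr{W}_x$, vanishing on $\{v\wedge w:v,w\in\widehat{\C^{0}_x}\}$ propagates to all of $\wedge^{2}\mathscr{W}_x$, so $\mathscr{W}$ is integrable; on the other hand the distribution generated by the varieties of minimal rational tangents is bracket-generating when $\mathrm{Pic}(X)\cong\ZZ$ (in essence Theorem~2.5 of \cite{Hw2}), and these two facts together give $\mathscr{W}=T_X$, i.e. $\C_x=\C^{0}$ is non-degenerate. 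For irreducibility I would run a parallel argument on the small components: a component $\C^{i}$ ($i\geq 1$), being disjoint from $\C^{0}$, either spans a proper $\K$-invariant sub-distribution, excluded as above, or --- after spreading the finitely many components of $\K_x$ over $X^{\circ}$ --- yields a connected finite étale cover of a big open subset of $X$, which by purity of the branch locus extends over all of $X$ and is then trivial because $\pi_1(X)=1$; either way we get a contradiction, so $\C_x$ is irreducible.

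The step I expect to be the main obstacle is exactly this Zak-type implication --- that $\dim\C_x\geq\frac{n-1}{2}$ degenerates the projective second fundamental form of $\C_x$, and with it the Frobenius bracket of $\mathscr{W}$ along the minimal rational tangents --- together with its globalisation. Both $\mathscr{W}$ and the candidate étale covers are under control only over the locus of free, ``standard'' $\K$-lines, and one must rule out that a pathology hides along $Z_{\K}$ or along the loci where the splitting type of $T_X|_\ell$ jumps; this is precisely where $\dim\C_x\geq\frac{n-1}{2}$ is indispensable, since it makes the relevant loci too large to be absorbed into such bad sets, so that the conclusions obtained over $X^{\circ}$ hold on all of $X$. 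Granting this, smoothness, irreducibility and non-degeneracy of $\C_x$ follow.
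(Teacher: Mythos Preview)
The paper does not supply a proof of this proposition: it is stated with a citation to \cite{Hw2} and no argument is given in the paper itself. There is therefore no ``paper's proof'' against which to compare your attempt.

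On the merits of your sketch, there is one concrete oversight in the irreducibility step. You allow $\C_x$ to have, besides the large component $\C^{0}$ of dimension $p:=\dim\C_x$, further ``small'' components of dimension $<n-1-p$, and then devote the bulk of your argument to ruling these out via sub-distributions and \'etale covers. But no such small components can exist: the paragraph immediately preceding the proposition already records that $\K_x$ is a disjoint union of smooth varieties \emph{all of the same dimension $p$}, since for every free $\K$-line $\ell$ through $x$ one has $T_X|_\ell\cong{\mathscr{O}}_{\PP^1}(2)\oplus{\mathscr{O}}_{\PP^1}(1)^{p}\oplus{\mathscr{O}}_{\PP^1}^{\,n-1-p}$ with $p+2=-K_X\cdot\K$ determined numerically by the family $\K$, not by the individual line. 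Thus every component of $\C_x\subset\PP^{n-1}$ has dimension $p\geq\frac{n-1}{2}$; as they are pairwise disjoint, the projective dimension theorem leaves exactly one. (The paper itself uses precisely this one-line argument in the proof of Theorem~\ref{hyp}.) Your \'etale-cover discussion is aimed at a non-existent obstruction.

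For non-degeneracy, forming the linear-span distribution $\mathscr{W}$ and contradicting ${\rm Pic}(X)\cong\ZZ$ is indeed Hwang's route. But the step you flag as the ``main obstacle''---that the inequality $\dim\C_x\geq\frac{n-1}{2}$ forces a Zak-type degeneration of the second fundamental form and hence the vanishing of the Frobenius bracket of $\mathscr{W}$---is asserted rather than argued, and in fact mis-locates the role of the dimension hypothesis: in Hwang's treatment that bound is used only to obtain irreducibility, while non-degeneracy is deduced from ${\rm Pic}(X)\cong\ZZ$ together with smoothness (and irreducibility) of $\C_x$, with no further appeal to $\dim\C_x$. So your outline has the right architecture, but carries a superfluous complication for irreducibility and an unproven, mis-diagnosed step for non-degeneracy.
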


\begin{pro}\label{fano} Let $X \subset \PP^N$ be a projective manifold covered by lines and $x$ a general point on $X$. Assume that $F_1(x,X)$ is irreducible. Then there exists a unique covering family of lines. In particular, its variety of minimal rational tangents at $x$ coincides with $F_1(x,X) \subset \PP_{{\ast}}(T_xX)$. 

\end{pro}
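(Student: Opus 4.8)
The plan is to show first that the irreducibility of $F_1(x,X)$ at a general point $x$ forces $F_1(X)$ to have a unique covering component. Since $X$ is covered by lines, $F_1(X)$ has at least one covering family, say $\K$; by definition a covering family contains a free line, and a free line $l$ through a general point $x$ has $[l]$ lying in a unique irreducible component of $F_1(X)$ (as recalled in the excerpt, $F_1(X)$ is smooth at the point corresponding to a free line). First I would argue that if $\K$ and $\K'$ were two distinct covering families, then for a general $x$ both $\K_x$ and $\K'_x$ are nonempty (each covering family passes through a general point, since its non-free locus is a proper closed subset), and they would give disjoint nonempty closed subsets of $F_1(x,X)$, because a free line belongs to only one component of $F_1(X)$ and hence to only one of $\K,\K'$. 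This contradicts the assumed irreducibility of $F_1(x,X)$. Therefore the covering family $\K$ is unique.

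Next I would identify $\K_x$ with $F_1(x,X)$ for general $x$. By construction $\K_x = \{[l]\in\K \mid x\in l\}$ is a closed subset of $F_1(x,X)$. Conversely, if $[l]\in F_1(x,X)$ for a general $x$ lying outside $Z_{\K}$, then $l$ is free, so $[l]$ lies in the unique component of $F_1(X)$ through it; since that component must be a covering family (a component containing a free line is covering) and $\K$ is the only covering family, $[l]\in\K$, hence $[l]\in\K_x$. Thus $\K_x = F_1(x,X)$ as subschemes, at least set-theoretically, and since $x$ is general (outside $Z_{\K}$), the scheme structures agree by the smoothness of $F_1(x,X)$ at free lines. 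Applying the tangent map $\tau_x$, which is an embedding by the cited result of Hwang, the variety of minimal rational tangents $\C_x = \tau_x(\K_x) = \tau_x(F_1(x,X))$, i.e. $\C_x$ coincides with $F_1(x,X) \subset \PP_{\ast}(T_xX)$.

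The only subtlety — and the step I expect to require the most care — is the interplay between "general point of $X$" and the non-free locus $Z_{\K}$ of the (a priori not yet unique) covering families: I must ensure that a single general $x$ lies outside $Z_{\K} \cup Z_{\K'}$ for all relevant families simultaneously, and outside the locus where $F_1(x,X)$ might fail to be irreducible. Since there are finitely many components of $F_1(X)$, only finitely many covering families occur, each with a proper closed non-free locus, so their union is a proper closed subset and a general $x$ avoids it; combined with the hypothesis that $F_1(x,X)$ is irreducible for general $x$, this is enough. Everything else is a direct application of the deformation-theoretic facts recalled before Proposition~\ref{VMRT}.
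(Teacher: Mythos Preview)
Your argument is correct and follows the same line as the paper's: decompose $F_1(X)$ into covering components $F^i$ and non-covering components $G^j$, observe that for general $x$ only the covering components contribute to $F_1(x,X)$, and use irreducibility to force a unique covering component. One point to tighten in your first paragraph: two disjoint nonempty \emph{closed} subsets alone do not contradict irreducibility---you need either that the $\K_x$'s together exhaust $F_1(x,X)$ (which is exactly what the paper writes as $F_1(x,X)=\bigcup_i F^i(x,X)$, and which you yourself supply later when you note that every line through a general $x$ lies in some covering family), or that each $\K_x$ is clopen in the smooth scheme $F_1(x,X)$.
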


\begin{proof} Assume the contrary. Let 
\begin{eqnarray}
F_1(X)= \bigcup F^i \cup \bigcup G^j \nonumber
\end{eqnarray}
be the irredundant irreducible decomposition, where $F^i$'s are covering families and $G^j$'s are not. From our assumption, we have at least two $F^i$'s. 
For a general point $x$ on $X$, 
\begin{eqnarray}
F_1(x,X)= \bigcup F^i(x,X), \nonumber
\end{eqnarray}
where $F^i(x,X)$ is the family of lines from $F^i$ passing through $x$.
Then we see that $F^i(x,X)$ is non-empty and $F^i(x,X)$ is not contained in $F^{i'}$ for $i \neq i'$. This is a contradiction. Consequently, $X$ has a unique covering family of lines.
\end{proof}

\begin{pro}\label{nf} Let $X \subset \PP^N$ be a projective manifold and $Y$ a smooth hyperplane section of $X$. Assume that $Y$ is covered by lines. Then the following holds.
\begin{enumerate}
\item There exists a covering family of lines $\K_X$ for $X$ which contains a covering family of lines for $Y$. In particular, $X$ is also covered by lines.
\item $Y$ is not contained in the non-free locus $Z_{\K_X}$.
\end{enumerate}   

\end{pro}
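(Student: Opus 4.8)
The plan is to build the family $\K_X$ by deforming lines of $Y$ inside $X$, using the fact that the normal bundle of a line grows by one when one passes from a smooth hyperplane section to the ambient manifold. First I would fix a covering family of lines $\K_Y$ on $Y$, which exists since $Y$ is covered by lines, and choose a free $\K_Y$-line $l \subset Y$ through a general point; such a line exists because a covering family always contains a free line. By adjunction one has $N_{l/X} \cong N_{l/Y} \oplus N_{Y/X}|_l$, and since $N_{Y/X}|_l = \O_l(1)$ while $N_{l/Y}$ is nef (as $l$ is free in $Y$, so $T_Y|_l$ is nef and hence $N_{l/Y}$ is nef), the bundle $N_{l/X}$ is nef with $h^1(l, N_{l/X}) = 0$. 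Therefore $F_1(X)$ is smooth at $[l]$ of the expected dimension, so there is a unique irreducible component $\K_X$ of $F_1(X)$ containing $[l]$; since $[l]$ corresponds to a free line, $\K_X$ is a covering family of lines on $X$ (by \cite[II.~Corollary~3.5.3 and Proposition~3.10]{Ko}). As $[l]$ also lies in $\K_Y$, the component $\K_X$ contains $\K_Y$ after identifying $F_1(Y)$ with the sublocus of $F_1(X)$ parametrizing lines contained in $Y$; more precisely the natural closed immersion $F_1(Y) \hookrightarrow F_1(X)$ sends the component $\K_Y$ into $\K_X$ because it sends $[l]$ to $[l]$ and $\K_X$ is the only component of $F_1(X)$ through $[l]$. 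This proves (i).

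For (ii), recall that the non-free locus $Z_{\K_X} \subset X$ is the closed set of points all of whose $\K_X$-lines are non-free, equivalently the image under $\iota$ of the locus of non-free members of $\K_X$. Suppose for contradiction that $Y \subset Z_{\K_X}$. A general point $y$ of $Y$ lies on a $\K_Y$-line $l' \subset Y$ which is free in $Y$ (since $\K_Y$ is a covering family on $Y$, free lines dominate $Y$); by the same adjunction computation as above, $N_{l'/X}$ is nef, so $l'$ is a free line on $X$. But $l'$ is a member of $\K_X$ by part (i), so $l'$ is a free member of $\K_X$ passing through $y$, contradicting $y \in Z_{\K_X}$. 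Hence $Y \not\subset Z_{\K_X}$.

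The only genuinely delicate point is the comparison of Hilbert schemes in part (i): one must check that the inclusion of schemes $Y \subset X$ induces a morphism $F_1(Y) \to F_1(X)$ which is a closed immersion and which, on the smooth locus where lines are free, identifies components correctly. This follows from the functorial description of the Hilbert scheme together with the smoothness of $F_1(X)$ at free points, but I would state it carefully. Everything else is a routine normal bundle computation via the conormal exact sequence $0 \to N_{l/Y} \to N_{l/X} \to N_{Y/X}|_l \to 0$ and $N_{Y/X}|_l \cong \O_{\PP^1}(1)$, noting that an extension of $\O_{\PP^1}(1)$ by a nef bundle on $\PP^1$ is again nef.
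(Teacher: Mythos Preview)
Your argument for part~(i) is correct and essentially the same as the paper's: both pick a free $\K_Y$-line $l$, compare bundles restricted to $l$ via the short exact sequence coming from $Y\subset X$ (the paper uses tangent bundles, you use normal bundles, which is equivalent since $T_{\PP^1}$ is fixed), conclude that $l$ is free in $X$, and take the unique component of $F_1(X)$ through $[l]$.

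Your argument for part~(ii), however, rests on an incorrect description of the non-free locus. You write that $Z_{\K_X}$ is ``the closed set of points all of whose $\K_X$-lines are non-free, equivalently the image under $\iota$ of the locus of non-free members of $\K_X$.'' These two descriptions are \emph{not} equivalent, and the paper uses the second one: by definition $Z_{\K_X}$ is the locus swept out by the non-free $\K_X$-lines, so $y\in Z_{\K_X}$ means that \emph{some} non-free $\K_X$-line passes through $y$, not that every $\K_X$-line through $y$ is non-free. Consequently, exhibiting a free $\K_X$-line through a general $y\in Y$ does not contradict $y\in Z_{\K_X}$; there could still be a non-free $\K_X$-line through $y$ (for instance one not contained in $Y$).

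This is exactly why the paper's proof of (ii) is more elaborate. Assuming $Y\subset Z_{\K_X}$, the paper takes the irredundant decomposition $Z_{\K_X}=Z_1\cup\cdots\cup Z_m$, observes that $Y=Z_{i_0}$ for some $i_0$ (since $Y$ is an irreducible divisor), and then for a general $y\in Y$ picks a \emph{non-free} $\K_X$-line $l_y$ through $y$. The key step is to show $l_y\not\subset Y$: if $l_y\subset Y$ then, by generality of $y$, $l_y$ would be free in $Y$ and hence (by the computation in (i)) free in $X$, a contradiction. Since $l_y\subset Z_{\K_X}$ but $l_y\not\subset Y=Z_{i_0}$, irreducibility forces $l_y\subset Z_{i_1}$ for some $i_1\neq i_0$, so a general $y\in Y$ lies in $\bigcup_{i\neq i_0}Z_i$, whence $Y\subset Z_{i_1}$ for some $i_1\neq i_0$, contradicting irredundancy. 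Your argument recovers only the trivial consequence of (i) that a general point of $Y$ lies on some free $\K_X$-line, which is strictly weaker than $Y\not\subset Z_{\K_X}$.
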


\begin{proof} $\rm (i)$ Since $Y$ is covered by lines, there exists a covering family of lines $\K_Y$ for $Y$ and a free $\K_Y$-line $l$ in $Y$. Then we have an exact sequence
\begin{eqnarray}
0 \rightarrow T_Y|_l \rightarrow T_X|_l \rightarrow N_{Y/X}|_l \rightarrow 0. \nonumber
\end{eqnarray}
We have $T_Y|_l \cong {\mathscr{O}}_{\PP^1}(2) \oplus {\mathscr{O}}_{\PP^1}(1)^{p} \oplus {\mathscr{O}}_{\PP^1}^{q}$ for some non-negative integers $p, q$ and $N_{Y/X}|_l \cong {\mathscr{O}}_{\PP^1}(1)$. Since this sequence splits, $l$ is also free in $X$. Then there is a unique family of lines $\K_X$ for $X$ containing $[l]$. Hence $\K_X$ is a covering family of lines which contains $\K_Y$.\\ 
$\rm (ii)$ Let 
\begin{eqnarray}\label{Z} 
Z_{\K_X}=Z_1 \cup \cdots \cup Z_m \nonumber
\end{eqnarray}
be the irredundant irreducible decomposition of the non-free locus of $\K_X$.  
Assume that $Y$ is contained in $Z_{\K_X}$. Then $Y$ is contained in $Z_{i_0}$ for some $i_0$. Since we have ${\rm codim}_XY=1$, $Y$ coincides with $Z_{i_0}$.  
From the definition of the non-free locus, through a point $y$ on $Y$, there is a $\K_X$-line $l_y$ which is not free in $X$. 
Then $l_y$ is not contained in $Y$ for a general point $y$ on $Y$. In fact, if $Y$ contains $l_y$, then it follows from the generality of $y$ that $l_y$ is free in $Y$. The same argument as in $\rm (i)$ implies that $l_y$ is also free in $X$. This is a contradiction. 
Hence one sees that $l_y$ is contained in $Z_{i_1}$ for some $i_1 \neq i_0$, provided that $y$ is general. In particular, $Z_{i_1}$ contains a general point $y$. This implies that $Y$ is contained in 
\begin{eqnarray}
Z_1 \cup \cdots \cup \hat{Z_{i_0}} \cup \cdots \cup Z_m. \nonumber
\end{eqnarray}
Therefore $Z_{i_0}$ coincides with $Z_{i_2}$ for some $i_2 \neq i_0$. This gives rise to a contradiction.
\end{proof}

\begin{pro}\label{sechomog} Let $W$ be a rational homogeneous manifold of Picard number $1$ and $W \subset \PP^N$ the embedding defined by the very ample generator 
 of the Picard group of $W$. Let $\Lambda \subset \PP^N$ be a linear subspace, $X$ the linear section of $W$ by $\Lambda$ and $x$ a general point on $X$. Assume that $X$ is a projective manifold covered by lines and it is not a projective space. If $\dim F_1(x,W)>{\rm codim}_{\PP^N}\Lambda$, then $F_1(x,X)$ is a smooth and irreducible variety of degree $>1$. Furthermore there exists a unique covering family of lines for $X$ and $F_1(x,X) \subset \PP_{{\ast}}(T_xX)$ is its variety of minimal rational tangents at $x$.  
\end{pro}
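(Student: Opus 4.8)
The plan is to recognize $F_1(x,X)$ as a linear section of the variety of minimal rational tangents of $W$ and then combine a Lefschetz-type connectedness statement with the deformation theory recalled above. Since $W$ is embedded by the ample generator of ${\rm Pic}(W)$ and $X=W\cap\Lambda$ is not a projective space, $W$ itself is not a projective space, so its index is $\le\dim W$ and hence the variety of minimal rational tangents $\C^W_x:=F_1(x,W)\subset\PP_{\ast}(T_xW)$ --- a smooth irreducible homogeneous variety --- has dimension $p_W:=\dim F_1(x,W)\le\dim W-2$. Put $c:={\rm codim}_{\PP^N}\Lambda$ and $n:=\dim X$; the hypothesis is $p_W>c$, so $n\ge\dim W-c\ge 3$. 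A line $\ell\ni x$ lies on $X=W\cap\Lambda$ exactly when it lies on $W$ and, since $x\in\Lambda$ and $\Lambda$ is linear, its tangent direction at $x$ lies in $T_x\Lambda$; passing through the (embedding) tangent map one obtains a scheme-theoretic identification
\[
F_1(x,X)=\C^W_x\cap L_x,\qquad L_x:=\PP_{\ast}\bigl(T_xW\cap T_x\Lambda\bigr)\subset\PP_{\ast}(T_xW),
\]
where $L_x$ is a linear subspace of codimension $\le c$, because $\dim(T_xW\cap T_x\Lambda)\ge\dim W-c$. In particular $\dim F_1(x,X)\ge p_W-c\ge 1$.

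Granting this, $F_1(x,X)$ is a linear section of the irreducible projective variety $\C^W_x$ cut by a linear space of codimension $\le c\le p_W-1=\dim\C^W_x-1$; being positive dimensional, it is connected (the intersection of an irreducible projective variety with a linear subspace is connected once it has positive dimension). For smoothness and irreducibility I would use that $X$ is covered by lines: a general $x$ lies outside ${\rm Locus}(G)$ for every non-covering family $G$ of lines and outside the non-free locus of every covering family, so every line through $x$ is free and belongs to some covering family $\K^i$, giving $F_1(x,X)=\bigsqcup_i\K^i_x$. This union is genuinely disjoint --- a free line lies on a unique irreducible component of $F_1(X)$ --- and each $\K^i_x$ is a disjoint union of smooth varieties by the deformation theory recalled above; since $F_1(x,X)$ is connected, it must consist of a single such smooth variety, hence is smooth and irreducible and equals $\K_x$ for a covering family $\K$. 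Proposition~\ref{fano} then gives that $\K$ is the unique covering family of lines on $X$ and that $F_1(x,X)=\C_x$ is its variety of minimal rational tangents at $x$.

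It remains to prove $\deg F_1(x,X)>1$, i.e.\ that $\C_x$ is not a linear subspace of $\PP_{\ast}(T_xX)$. Since $n\ge 3$, the Lefschetz theorem for the linear section $X\subset W$ gives ${\rm Pic}(X)\cong\ZZ$. Suppose $\C_x$ were a linear subspace. If $\dim\C_x\ge\frac{n-1}{2}$, Proposition~\ref{VMRT} forces $\C_x$ to be nondegenerate, hence to equal $\PP_{\ast}(T_xX)$, so $X\cong\PP^n$ by the characterization of projective space via its variety of minimal rational tangents --- contradicting the hypothesis. In general the same conclusion holds, because a projective manifold of Picard number one whose variety of minimal rational tangents at a general point is a linear subspace is a projective space (Cho--Miyaoka--Shepherd-Barron, and its refinements by Kebekus, Hwang--Mok and Araujo), again contradicting $X\not\cong\PP^n$. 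Therefore $\deg F_1(x,X)>1$.

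I expect the main obstacle to lie in two places. One is making the identification $F_1(x,X)=\C^W_x\cap L_x$ precise at the scheme-theoretic level and checking that $L_x$ really has codimension $\le c$ even when $\Lambda$ meets $W$ non-transversally --- only $\dim(T_xW\cap T_x\Lambda)\ge\dim W-c$ is needed, which always holds. The other is the degree estimate, which rests on the characterization of $\PP^n$ by its variety of minimal rational tangents together with the computation ${\rm Pic}(X)\cong\ZZ$ (hence on the range in which the Lefschetz theorem is available); by comparison the connectedness and the smoothness/irreducibility reduction are formal once the identification is in hand.
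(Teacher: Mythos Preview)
Your argument is correct and tracks the paper's closely: both identify $F_1(x,X)$ with the linear slice $F_1(x,W)\cap\PP_\ast(T_x\Lambda)$, invoke Fulton--Hansen for connectedness (your phrasing ``a positive-dimensional linear section of an irreducible projective variety is connected'' is exactly this), obtain smoothness from the freeness of all lines through a general $x$, and conclude irreducibility and the uniqueness of the covering family via Proposition~\ref{fano}.

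The one substantive difference is how you establish $\deg F_1(x,X)>1$. The paper records that the VMRT $\C^W_x=F_1(x,W)$ of a rational homogeneous $W\neq\PP^N$ in its minimal embedding already has degree $>1$ (citing \cite{E}, \cite{HM}, \cite{LM}) and leaves the passage to $F_1(x,X)$ implicit: by adjunction $-K_X\cdot\ell=r_W-c$, so $\dim F_1(x,X)=p_W-c$ is the expected dimension of the linear section, and an irreducible linear section of expected dimension inherits the degree of the ambient variety by B\'ezout. You instead argue by contradiction through the characterization of $\PP^n$ by a linear VMRT (Cho--Miyaoka--Shepherd-Barron, Hwang--Mok, Araujo), after using Lefschetz to get $\rho_X=1$. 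This works, but it imports a theorem considerably deeper than anything else in the proof. Note also that your first sub-case via Proposition~\ref{VMRT} does not suffice by itself --- for instance in the paper's application to codimension-five linear sections of the spinor variety $S_4$ one has $\dim\C_x=1<2=(n-1)/2$ --- so the Araujo-type result is really carrying the weight in your route. The paper's approach is lighter, staying within elementary degree considerations once $\deg\C^W_x>1$ is known.
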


\begin{proof} For a general point $x \in X$, $F_1(x,X)$, $F_1(x,W)$ and $F_1(x,\Lambda)$ can be naturally embedded into $\PP_{{\ast}}(T_x\PP^N)$. Then $F_1(x,X)=F_1(x,W) \cap F_1(x,\Lambda)$ in $\PP_{{\ast}}(T_x\PP^N)$. It is known that $F_1(x,W)$ is irreducible variety of degree $>1$ (see \cite{E}, \cite{HM}, \cite{LM}). According to the Fulton-Hansen Connectedness Theorem \cite[Corollary~1]{FH}, $F_1(x,X)$ is connected by the assumption that $\dim F_1(x,W)>{\rm codim}_{\PP^N}\Lambda$. Furthermore, from the generality of $x \in X$, $F_1(x,X)$ is smooth. Hence the first part of our assertion holds. The second part follows from Proposition~\ref{fano}. 
\end{proof}

\subsection{Fano manifolds}

A {\it Fano manifold} means a projective manifold $X$ with ample anticanonical divisor $-K_X$. For a Fano manifold $X$, the {\it index} is defined as the greatest positive integer $r_X$ such that $-K_X=r_XH$ for some ample divisor $H$ on $X$, and the {\it pseudo-index} is defined as the minimum  $i_X$ of the intersection numbers of the anticanonical divisor with rational curves on $X$. Given a projective manifold $X$, we denote by $\rho_X$ the Picard number of $X$.  

\begin{them}[\cite{Wi}]\label{Wi} Let $F$ be a smooth Fano $n$-fold of index $r_F$ and pseudo-index $i_F$. 
\begin{enumerate}
\item If $i_F > \frac{n}{2}+1$, then $\rho_F=1$, and
\item if $r_F=\frac{n}{2}+1$ and $\rho_F \geq 2$, then $F \cong \PP^{r_F-1} \times \PP^{r_F-1}$.
\end{enumerate}
\end{them}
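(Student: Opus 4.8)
The argument is Mori theory. Suppose $\rho_F\ge 2$; this will contradict $i_F>\frac n2+1$ in case (i), while in case (ii) the very same estimate, now an equality throughout, pins down $F$. The one real ingredient is Wi\'sniewski's inequality on fibres of extremal contractions: if $\psi\colon F\to Y$ is the contraction of an extremal ray $R\subset\overline{NE}(F)$, with exceptional locus $E(\psi)$ (so $E(\psi)=F$ if $\psi$ is of fibre type) and $G$ an irreducible component of a positive-dimensional fibre, then $\dim E(\psi)+\dim G\ge n+l(R)-1$, where $l(R):=\min\{-K_F\cdot C:[C]\in R,\ C\ \text{rational}\}\ge i_F$. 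I also use two trivialities: (a) every curve inside a fibre of $\psi$ has numerical class in $R$; (b) closed subvarieties $A,B\subset F$ with $A\cap B\ne\emptyset$ satisfy $\dim(A\cap B)\ge\dim A+\dim B-n$, so a finite nonempty $A\cap B$ forces $\dim A+\dim B\le n$.

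\emph{Case (i).} Assume $\rho_F\ge 2$. By the cone theorem there are distinct extremal rays $R_1\ne R_2$; let $\psi_i\colon F\to Y_i$ be their contractions and $G_i$ a positive-dimensional fibre component of $\psi_i$. Wi\'sniewski's inequality with $\dim E(\psi_i)\le n$ gives $\dim G_i\ge l(R_i)-1\ge i_F-1$. By (a), a curve in $G_1\cap G_2$ would lie in $R_1\cap R_2=\{0\}$, impossible; so $G_1\cap G_2$ is finite. At least one $\psi_i$ is of fibre type --- otherwise both exceptional loci are proper, forcing $\dim E(\psi_i)\ge\dim G_i\ge i_F$, and a short count still yields intersecting fibres --- so the $G_i$ can be chosen meeting, and (b) gives $2(i_F-1)\le\dim G_1+\dim G_2\le n$, i.e. $i_F\le\frac n2+1$, contrary to hypothesis. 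Hence $\rho_F=1$.

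\emph{Case (ii).} Now $r_F=\frac n2+1$ and $\rho_F\ge 2$, so $i_F\ge r_F=\frac n2+1$; with the bound $i_F\le\frac n2+1$ from the argument of (i) this forces $i_F=r_F=\frac n2+1$ and turns every inequality there into an equality. So $\dim G_i=\frac n2$, hence $\dim E(\psi_i)=n$: both $\psi_i\colon F\to Y_i$ are of fibre type with $\dim Y_i=\frac n2$. Applying (a),(b) to an arbitrary fibre of $\psi_i$ against a general fibre of the other contraction shows every fibre of $\psi_i$ has dimension exactly $\frac n2$, so $\psi_i$ is equidimensional with $l(R_i)=\frac n2+1$ the maximal length allowed; by the structure theorem for equidimensional extremal fibrations of maximal length, each $\psi_i$ is a $\PP^{n/2}$-bundle over a smooth base $Y_i$, so a general fibre $G_1$ of $\psi_1$ is $\cong\PP^{n/2}$. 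Then $\psi_2|_{G_1}\colon\PP^{n/2}\to Y_2$ is finite by (a), hence surjective as $\dim Y_2=\frac n2$; by Lazarsfeld's theorem on finite maps from projective space $Y_2\cong\PP^{n/2}$ or a smooth quadric, and the quadric is excluded numerically via $-K_F=(\frac n2+1)H$ (with the few small-dimensional cases done by hand), so $Y_2\cong\PP^{n/2}$, and symmetrically $Y_1\cong\PP^{n/2}$ and $\rho_F=\rho_{Y_i}+1=2$. Finally $\psi_1\times\psi_2\colon F\to\PP^{n/2}\times\PP^{n/2}$ is finite, of the same degree as $\psi_2|_{G_1}\colon\PP^{n/2}\to\PP^{n/2}$, which is $1$ by Lazarsfeld once more; a finite birational morphism of smooth projective varieties is an isomorphism, so $F\cong\PP^{n/2}\times\PP^{n/2}=\PP^{r_F-1}\times\PP^{r_F-1}$.

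\emph{Main obstacle.} The weight of the proof sits in the last part of (ii): upgrading each $\psi_i$ to a genuine $\PP^{n/2}$-bundle --- in particular seeing that $Y_i$ is smooth, which is what lets one invoke Lazarsfeld --- and then ruling out the quadric alternative for the bases, the one step that becomes sensitive to the value of $n$ and needs the small cases treated separately. A lesser wrinkle is the reduction in (i) ensuring that the two fibres $G_1,G_2$ can be taken to intersect when neither contraction is of fibre type; granting that, both (i) and the reduction in (ii) are nothing but remarks (a), (b) and Wi\'sniewski's inequality.
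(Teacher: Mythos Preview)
The paper does not prove this theorem. It is quoted from the literature --- the reference \cite{Wi} is Wi\'sniewski, \emph{On a conjecture of Mukai}, Manuscripta Math.\ 68 (1990) --- and used as a black box. So there is no ``paper's own proof'' to compare against; what you have written is an attempt to reprove Wi\'sniewski's result, and your strategy (the fibre-dimension inequality plus an intersection count in $F$) is exactly Wi\'sniewski's.

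That said, two steps in your write-up are not yet proofs. In (i), the parenthetical ``a short count still yields intersecting fibres'' when neither contraction is of fibre type is not an argument: on an arbitrary $n$-fold two closed subsets of dimension $>n/2$ need not meet. One clean fix is to take a point $x$ in a nontrivial fibre $G_1$ of $\psi_1$ and look at the locus $V_x$ swept out by rational curves of class in $R_2$ through $x$; bend-and-break gives $\dim V_x\ge i_F-1$, and now $G_1\cap V_x\ni x$ so (b) applies. In (ii), ``degree $1$ by Lazarsfeld once more'' is wrong as stated: Lazarsfeld's theorem identifies the \emph{target} of a finite surjection from $\PP^m$, not the degree --- there are finite maps $\PP^m\to\PP^m$ of every degree. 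The degree is pinned down instead by the index: $H|_{G_1}\cong\mathcal{O}_{\PP^{n/2}}(1)$ since $-K_F|_{G_1}=-K_{G_1}$, while $H$ restricted to $G_1$ must be a positive multiple of $(\psi_2|_{G_1})^*\mathcal{O}(1)$; this forces the pullback to be $\mathcal{O}(1)$, hence degree $1$. Your ``excluded numerically'' for the quadric base and the appeal to a ``structure theorem for equidimensional extremal fibrations of maximal length'' are likewise gestures rather than arguments; both are true, but if you are reproving Wi\'sniewski you should supply them.
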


\begin{them}[\cite{KO}]\label{KO} Let $F$ be a smooth Fano $n$-fold of index $r_F$. Then $r_F \leq n+1$. Furthermore, if $r_F = n+1$, then $F$ is isomorphic to $\PP^n$. If $r_F = n$, then $F$ is isomorphic to $Q^n$.

\end{them}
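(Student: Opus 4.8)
The plan is to produce on $F$ a minimal covering family of lines and to read off the structure of $F$ from its variety of minimal rational tangents. For the bound $r_F\le n+1$: since $F$ is Fano it is uniruled, so some rational curve passes through a general point, and by Mori's bend-and-break (see \cite{Ko}) we may take such a curve $C_0$ of minimal anticanonical degree, so that $0<-K_F\cdot C_0\le n+1$. Writing $-K_F=r_FH$ with $H$ ample and noting that $H\cdot C_0$ is a positive integer, we get $r_F\le r_F(H\cdot C_0)=-K_F\cdot C_0\le n+1$.

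Now assume $r_F\ge n$; we may assume $n\ge 2$, as $n=1$ forces $F\cong\PP^1$. Then $r_F(H\cdot C_0)=-K_F\cdot C_0\le n+1<2r_F$, so $H\cdot C_0=1$: the curve $C_0$ is a line with respect to $H$, and being a minimal rational curve through a general point it is free (see \cite{Ko}). Hence its family $\K$ is a covering family of lines with $H\cdot\K=1$ and $-K_F\cdot\K=r_F$, and by the deformation theory of rational curves recalled above the variety of minimal rational tangents $\C_x\subset\PP_{{\ast}}(T_xF)\cong\PP^{n-1}$ at a general point $x$ has dimension $-K_F\cdot\K-2=r_F-2$.

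If $r_F=n+1$, then $\dim\C_x=n-1=\dim\PP_{{\ast}}(T_xF)$, and since $\PP_{{\ast}}(T_xF)$ is irreducible this forces $\C_x=\PP_{{\ast}}(T_xF)$, so ${\rm Locus}(\K_x)=F$. One then identifies $F$ with $\PP^n$ and $H$ with the hyperplane class: the universal line over $\K_x\cong\PP^{n-1}$ is a $\PP^1$-bundle dominating $F$, and a standard argument shows that $F$ is the blow-down of one of its sections, whence $F\cong\PP^n$; equivalently this is the classical characterization of projective space by the positivity of $-K_F$ along rational curves. If instead $r_F=n$, then $\dim\C_x=n-2$, so $\C_x\subset\PP^{n-1}$ is a nondegenerate hypersurface. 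For $n\ge 3$ every rational curve $C$ satisfies $-K_F\cdot C=n(H\cdot C)\ge n>\frac n2+1$, so the pseudo-index of $F$ exceeds $\frac n2+1$ and Theorem~\ref{Wi}(i) gives $\rho_F=1$; since the index equals $n$ one has ${\rm Pic}(F)=\ZZ[{\mathscr{O}}_F(1)]$, so Proposition~\ref{VMRT} applies (note $n-2\ge\frac{n-1}{2}$) and $\C_x$ is a smooth, irreducible, nondegenerate hypersurface in $\PP^{n-1}$; one checks that its degree is $2$, hence $\C_x$ is a smooth quadric, and the analogous reconstruction yields $F\cong Q^n$ with $H$ the hyperplane class. (For $n\le 2$ one invokes the classification of del Pezzo surfaces, the case $r_F=n=2$ being $Q^2\cong\PP^1\times\PP^1$.)

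The main obstacle is precisely the reconstruction step in the two equality cases: passing from the knowledge that $\C_x$ is all of $\PP^{n-1}$ (resp.\ a smooth quadric) to an actual identification of $F$ with $\PP^n$ (resp.\ $Q^n$), together with the auxiliary fact that $\deg\C_x=2$ when $r_F=n$; these are genuinely geometric inputs, whereas everything leading up to them is bookkeeping with bend-and-break and deformation theory. An alternative route for the equality cases, bypassing varieties of minimal rational tangents, is induction on $n$ through a smooth member $D\in|H|$: by adjunction $-K_D=(r_F-1)H|_D$ on the $(n-1)$-fold $D$, so $D\cong\PP^{n-1}$ or $Q^{n-1}$ by induction; using $H^i(F,{\mathscr{O}}_F)=0$ for $i>0$ one then computes $h^0(F,H)$ and $H^n$ and concludes that $|H|$ embeds $F$ as a linear subspace or a quadric hypersurface. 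There the delicate step is to show that $|H|$ is base-point free, so that the induction can be set in motion.
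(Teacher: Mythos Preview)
The paper does not prove this theorem at all: Theorem~\ref{KO} is simply quoted from Kobayashi--Ochiai \cite{KO} and used as a black box, so there is no ``paper's own proof'' to compare against. What follows is therefore an assessment of your sketch on its own merits.

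Your argument for the bound $r_F\le n+1$ via bend-and-break is clean and correct (though historically anachronistic, since \cite{KO} predates Mori's theorem). The equality cases, however, are not proved. You yourself flag the reconstruction step---passing from $\C_x=\PP^{n-1}$ to $F\cong\PP^n$, and from $\C_x$ a smooth quadric to $F\cong Q^n$---as the ``main obstacle,'' and indeed it is: these are essentially the theorems of Cho--Miyaoka--Shepherd-Barron and Mok/Hwang, each a substantial piece of work, not something one waves through. Equally serious is the unsupported assertion ``one checks that its degree is $2$'': nothing in your setup forces the degree of the VMRT hypersurface, and without it the quadric case collapses. There is also a risk of circularity in invoking Theorem~\ref{Wi} and the VMRT machinery, parts of which may themselves rest on \cite{KO}.

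Your closing paragraph is the honest one: the inductive route through a smooth $D\in|H|$, with $-K_D=(r_F-1)H|_D$, is exactly how Kobayashi and Ochiai proceed. The genuine content of \cite{KO} lies in establishing that $|H|$ is base-point free (via Riemann--Roch and Kodaira vanishing) and that a smooth member exists, after which Hirzebruch--Riemann--Roch and the inductive hypothesis finish the job. If you want a self-contained proof, that is the argument to write out; the VMRT route, as presented, is a heuristic outline rather than a proof.
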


\begin{them}[\cite{F1,F2}]\label{del} Let $X$ be a smooth Fano $n$-fold with index $r_X=n-1$ (i.e. a del Pezzo manifold) whose Picard group is generated by a very ample line bundle.  Then $X$ is isomorphic to one of the following: 
\begin{enumerate}
\item a cubic hypersurface, 
\item a complete intersection of two quadric hypersurfaces, or 
\item a linear section of the Grassmann variety $G(2,\CC^5) \subset \PP_{\ast}(\bigwedge^2\CC^5)$.
\end{enumerate}
\end{them}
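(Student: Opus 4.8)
The plan is to reconstruct $X$ from the projective embedding given by $|H|$, organised by the single numerical invariant $d := H^n$ (we may assume $n \ge 3$, the case $n \le 2$ being vacuous). Writing $-K_X = (n-1)H$, so that $H = K_X + nH$ with $nH$ ample, Kodaira vanishing gives $H^i(X,H) = 0$ for $i > 0$; the same holds all along the \emph{ladder} $X = X^{(n)} \supset X^{(n-1)} \supset \cdots \supset X^{(1)}$ obtained by successively cutting with general members of the base-point-free system $|H|$. Indeed these sections are smooth and irreducible by Bertini, adjunction gives $-K_{X^{(k)}} = (k-1)\,H|_{X^{(k)}}$ (so each $X^{(k)}$ with $k \ge 2$ is Fano, whence $H^j(\O_{X^{(k)}}) = 0$ for $j > 0$), and $X^{(1)}$ is a smooth elliptic curve with $\deg(H|_{X^{(1)}}) = d$. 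Running the restriction sequences $0 \to \O_{X^{(k)}} \to \O_{X^{(k)}}(H) \to \O_{X^{(k-1)}}(H) \to 0$ upward from $h^0(X^{(1)},H) = d$ yields $h^0(X,H) = n + d - 1$, so $|H|$ embeds $X$ as a nondegenerate $n$-fold of degree $d$ and codimension $d - 2$ in $\PP^{n+d-2}$; in particular $d \ge 3$, since for $d = 1$ the image would lie in $\PP^{n-1}$ and for $d = 2$ it would be all of $\PP^{n}$, impossible for a del Pezzo $n$-fold. Finally I would record that $X$ is projectively normal and arithmetically Cohen--Macaulay, propagating this up the ladder from the elliptic curve section (where it is classical) via the same cohomology sequences, now for the twists $\O_{X^{(k)}}(mH)$, whose intermediate cohomology vanishes by Kodaira vanishing and Serre duality.

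Given the embedding, the three cases $d = 3,4,5$ are treated in turn. If $d = 3$, then $X \subset \PP^{n+1}$ is a hypersurface of degree $3$, i.e. a cubic. For $d \ge 4$, the computation above applied to $2H = K_X + (n+1)H$ gives $h^0(X,2H) = (n+1)d + \binom{n}{2}$, and combined with projective normality this yields $\dim I_X(2) = \binom{n+d}{2} - h^0(X,2H)$, which equals $2$ when $d = 4$ and $5$ when $d = 5$. In the case $d = 4$, $X \subset \PP^{n+2}$ thus lies on a pencil of quadrics. Every member of this pencil is irreducible, since a reducible quadric would force the irreducible nondegenerate $X$ into a hyperplane; hence two distinct members meet properly in a subscheme of pure dimension $n$ and degree $4$ by B\'ezout, and $X$, being an $n$-dimensional irreducible subscheme of degree $4$ contained in it, coincides with this complete intersection of two quadrics.

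In the case $d = 5$, $X \subset \PP^{n+3}$ has codimension $3$, degree $5$, and $\dim I_X(2) = 5$; moreover $I_X$ is generated by these quadrics (again checked along the ladder, the bottom rung being the elliptic quintic in $\PP^4$). Here I would strengthen ``arithmetically Cohen--Macaulay'' to the statement that the homogeneous coordinate ring of $X$ is Gorenstein, using that $X$ is subcanonical, $\omega_X \cong \O_X(-(n-1))$, to identify the canonical module. The Buchsbaum--Eisenbud structure theorem for codimension-$3$ Gorenstein ideals then presents $I_X$ as generated by the $4 \times 4$ sub-Pfaffians of a $5 \times 5$ skew-symmetric matrix of linear forms --- which is precisely the shape of the Pl\"ucker ideal of $G(2,\CC^5) \subset \PP_{\ast}(\bigwedge^2\CC^5)$ --- so the linear forms appearing as entries cut out a $\PP^{n+3} \subseteq \PP^9$ with $X = G(2,\CC^5) \cap \PP^{n+3}$, a linear section of the Grassmannian.

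The step I expect to be hardest is the $d = 5$ case: establishing that the coordinate ring is Gorenstein and that the Buchsbaum--Eisenbud matrix may be taken with linear entries --- equivalently, in Fujita's original approach, proving the rigidity assertion that a del Pezzo manifold of degree $5$ with cyclic Picard group is unique in each dimension, and hence must be $G(2,\CC^5) \cap \PP^{n+3}$. A subsidiary technical point is keeping projective normality and arithmetic Cohen--Macaulayness under uniform control along the ladder, and tracking the hypothesis on $\mathrm{Pic}$ under hyperplane sections, where Grothendieck--Lefschetz is unconditional only down to threefolds. An alternative that avoids the explicit syzygy analysis is to observe that del Pezzo manifolds are exactly the polarized manifolds with $\Delta$-genus $n + d - h^0(X,H) = 1$, and to invoke the classification of polarized varieties of $\Delta$-genus $1$; this is the route of \cite{F1,F2}.
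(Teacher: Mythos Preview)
The paper does not prove this theorem; it is quoted in the preliminaries as a result of Fujita \cite{F1,F2} and used as a black box in the later classification arguments.  There is therefore no proof in the paper to compare against.

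Your sketch is a faithful outline of Fujita's ladder argument, and the treatment of the individual degrees $d=3,4,5$ is essentially correct (the Buchsbaum--Eisenbud presentation for $d=5$, with the observation that the five Pfaffian generators of degree $2$ force the skew matrix to have linear entries, is the standard modern packaging of Fujita's rigidity step).  The one substantive gap is that you jump straight to ``the three cases $d=3,4,5$'' without establishing the upper bound $d\le 5$.  Cutting to the surface step of the ladder gives only $d\le 9$, and the statement that no del Pezzo $n$-fold with $d\ge 6$ has cyclic Picard group is genuine content: the degree-$6,7$ examples ($\PP^2\times\PP^2$, $(\PP^1)^3$, $\PP(T_{\PP^2})$, the blow-up of $\PP^3$ at a point) all have $\rho\ge 2$, and excluding further candidates is precisely part of Fujita's classification of $\Delta$-genus one varieties.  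You implicitly concede this in your final paragraph by deferring to that classification, so the honest summary is that your direct argument handles $3\le d\le 5$ cleanly but does not, on its own, close off $d\ge 6$.
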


\subsection{Extremal contractions}\label{extcont}

Let $X$ be a projective manifold and $NE(X)$ the cone of effective $1$-cycles on $X$. 
By the Contraction Theorem, given a $K_X$-negative extremal ray $R$ of the Kleiman-Mori cone $\overline{NE}(X)$, we obtain the contraction of the extremal ray $\varphi_R :X \rightarrow Y$. 
We say that $\varphi_R$ is {\it of fibering type} if $\dim X > \dim Y$.

Let $(X,H)$ be a pair consisting of a projective manifold $X$ of dimension $n$ and an ample line bundle $H$ on $X$, that is, a smooth polarized $n$-fold. %We say that $(X,L)$ is a {\it scroll} over a normal variety $Y$ of dimension $m$ if there exists a morphism with connected fibers $\varphi : X \rightarrow Y$ such that $K_X+(n-m+1)L = \varphi ^{\ast}A$ for some ample line bundle $A$ on $Y$. 
$(X,H)$ is called a {\it linear $\PP^d$-bundle} over a normal variety of dimension $m$ if there exists a surjective morphism $\varphi : X \rightarrow Y$ such that every fiber $(F,H|_F)$ is isomorphic to $(\PP^d,{\mathscr{O}}_{\PP^d}(1))$, where $d=n-m$. This is equivalent to say that 
there exists a surjective morphism $\varphi : X \rightarrow Y$ such that  $\E:=\varphi_{{\ast}}H$ is a locally free sheaf of rank $d+1$ and $(X,H)$ is isomorphic to $(\PP(\E),{\mathscr{O}}_{\PP(\E)}(1))$, where ${\mathscr{O}}_{\PP(\E)}(1)$ is the tautological line bundle on $\PP(\E)$ (see \cite[Section~3.2]{BS}).

%\begin{them}[{\cite[Proposition~14.1.3, Conjecture~14.1.10]{BSW},\cite[Theorem~1.7]{Ein}}]\label{lbdl} Let $X$ be a smooth projective $n$-fold and $L$ a very ample line bundle. Assume that $(X,L,\varphi )$ is a scroll, where $\varphi  :X \rightarrow Y$ is as in the definition of a scroll. Assume that 
%\begin{enumerate}
%\item $n \geq 2\dim Y+1$, or
%\item $3 \geq \dim Y$ and $n \geq 2\dim Y-1$.
%\end{enumerate}
%Then $\varphi $ is a linear projective bundle.
%\end{them}

\begin{them}[{\cite[Theorem~1.7]{Ein}}]\label{ein} Let $X \subset \PP^N$ be a smooth projective $n$-fold. Assume that there is an $f$-dimensional linear subspace $\Lambda$ in $X$ such that $N_{\Lambda/ X} \cong {\mathscr{O}}^{n-f}$ and $n \leq 2f-1$. Then $X \cong \PP_Y(\E)$, where $\E$ is a locally free sheaf of rank $f+1$ on a smooth projective $(n-f)$-fold $Y$.

\end{them}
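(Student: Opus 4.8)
The plan is to move $\Lambda$ inside $X$, using the triviality of its normal bundle, and to read off the projective bundle structure from the resulting family of linear subspaces.

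First I would record the deformation theory. As $\Lambda\cong\PP^f$ (and $f\geq 1$ since $n\leq 2f-1$), from $N_{\Lambda/X}\cong\O_\Lambda^{n-f}$ we get $H^1(\Lambda,N_{\Lambda/X})=0$, so the Hilbert scheme of $X$ is smooth at $[\Lambda]$ of dimension $h^0(N_{\Lambda/X})=n-f$; let $T$ be the irreducible component through $[\Lambda]$, $q:\U\to T$ the universal family and $p:\U\to X$ the evaluation morphism. The Hilbert polynomial being constant in a flat family and $\deg_{\PP^N}\Lambda=1$, every member $\Lambda_t$ is again an $f$-plane in $\PP^N$; hence $\O_{\PP^N}(1)$ restricts to $\O_{\PP^f}(1)$ on each fibre of $q$, which therefore makes $\U$ a genuine projective bundle $\PP_T(\E)$ with $\E:=q_{\ast}p^{\ast}\O_X(1)$ locally free of rank $f+1$ and $p^{\ast}\O_X(1)=\O_{\PP_T(\E)}(1)$. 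Since $N_{\Lambda/X}$ is trivial it is globally generated, so $dp$ is surjective at a general point of $\Lambda$; thus $p$ is dominant and, being proper, surjective, and $X$ is swept out by the $(n-f)$-dimensional family of linear $\PP^f$'s parametrized by $T$.

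Next I would prove that $p$ is birational. Since the members form a connected family, the lines they contain are all numerically equivalent, so by adjunction $\det N_{\Lambda_t/X}=\O_{\PP^f}$ for every $t$, and then for general $t$ semicontinuity gives $N_{\Lambda_t/X}\cong\O_{\PP^f}^{n-f}$; hence the scheme of members through a general point $x$ has Zariski tangent space $H^0(\Lambda_t,N_{\Lambda_t/X}\otimes\I_x)=H^0(\PP^f,\I_x)^{n-f}=0$, so finitely many members pass through $x$ and $p$ is finite of some degree $d$. Assume $d\geq 2$: a general $x$ lies on two distinct members $\Lambda_1,\Lambda_2$, and since $\dim T_x\Lambda_1+\dim T_x\Lambda_2=2f>n=\dim T_xX$ their tangent spaces meet non-trivially; as the $\Lambda_i$ are linear, this forces a whole line through $x$ into $\Lambda_1\cap\Lambda_2$, so $\Lambda_1\cap\Lambda_2$ is a linear subspace of dimension $\geq 2f-n\geq 1$. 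A dimension count on the incidence variety $\{(t_1,t_2,x):t_1\neq t_2,\ x\in\Lambda_{t_1}\cap\Lambda_{t_2}\}$ — using that only finitely many members contain a given line, so the members meeting a fixed $\Lambda_{t_1}$ along a positive-dimensional linear subspace form too small a family for their union over all $\Lambda_{t_1}$ to cover $X$ twice — then contradicts $n\leq 2f-1$. Hence $d=1$.

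Finally, a finite birational morphism onto the smooth, hence normal, variety $X$ is an isomorphism by Zariski's Main Theorem, so $X\cong\U\cong\PP_T(\E)$; and $T$ is a projective component of a Hilbert scheme which, being the base of a $\PP^f$-bundle with smooth total space $X$, is itself smooth, so $Y:=T$ works. The main obstacle is the step $d=1$: the hypothesis $n\leq 2f-1$ is precisely what makes the tangent spaces of two members through a common point overlap, and the delicate point — acute in the boundary case $n=2f-1$ — is to turn the resulting shared line into a contradiction, for which one needs the sharp finiteness statements about the families of members through a point or containing a line.
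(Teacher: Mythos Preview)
This theorem is quoted from \cite{Ein} without proof in the paper, so there is no in-paper argument to compare against; I can only assess your sketch on its own merits. Your deformation-theoretic setup is correct: smoothness of the Hilbert scheme at $[\Lambda]$, the identification $\U\cong\PP_T(\E)$, and the surjectivity and generic finiteness of $p$ all go through as you describe. The semicontinuity argument showing $N_{\Lambda_t/X}\cong\O^{n-f}$ for general $t$ is also fine, and in fact more is true: since the locus of $t$ with non-trivial normal bundle has dimension $\le n-f-1$, its image under $p\circ q^{-1}$ has dimension $\le n-1$, so \emph{every} member through a general $x$ has trivial normal bundle.

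The genuine gap is the step $d=1$, and your own closing sentence already concedes it. The proposed dimension count does not yield a contradiction: in the boundary case $n=2f-1$ one finds $\dim T=f-1$, the intersections $\Lambda_{t_1}\cap\Lambda_{t_2}$ are generically lines, and the incidence variety has dimension exactly $n$ --- all inequalities are balanced. The auxiliary claim that only finitely many members contain a given line is also unjustified for members with jumping normal bundle. The argument that works is not a dimension count but a normal-bundle computation on the shared line. If $d\ge2$, take two members $\Lambda_1\neq\Lambda_2$ through a general $x$, both with trivial normal bundle, and a line $\ell\subset\Lambda_1\cap\Lambda_2$. From the sequence $0\to N_{\ell/\Lambda_1}\to N_{\ell/X}\to N_{\Lambda_1/X}|_\ell\to 0$ one gets $N_{\ell/X}\cong\O(1)^{f-1}\oplus\O^{n-f}$. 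Each $N_{\ell/\Lambda_i}\cong\O(1)^{f-1}$ is a subbundle of $N_{\ell/X}$, and since $\mathrm{Hom}(\O(1),\O)=0$ it must land in the $\O(1)^{f-1}$ summand; a rank-$(f-1)$ subbundle of a rank-$(f-1)$ bundle is the whole thing, so $N_{\ell/\Lambda_1}=N_{\ell/\Lambda_2}$ inside $N_{\ell/X}$. Hence $T_y\Lambda_1=T_y\Lambda_2$ along $\ell$, and linearity forces $\Lambda_1=\Lambda_2$, the desired contradiction.
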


\begin{pro}[{\cite[Lemma~2.12]{fujita}}]\label{fujita} Let $\varphi:X \rightarrow Y$ be a surjective morphism from a manifold to a normal projective variety and $H$ an ample line bundle on $X$. Assume that $(F,H|_F) \cong (\PP^d,{\mathscr{O}}_{\PP^d}(1))$ for a general fiber $F$ of $\varphi$ and every fiber of $\varphi$ is $d$-dimensional. Then $Y$ is smooth and $\varphi$ makes $(X,H)$ a linear $\PP^d$-bundle with $X \cong \PP(\varphi_{{\ast}}H)$.
\end{pro}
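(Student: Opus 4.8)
This is \cite[Lemma~2.12]{fujita}; here is the strategy I would use. Since a general fibre of $\varphi$ is $\PP^{d}$, we have $\dim Y=n-d$, so $\varphi$ is equidimensional of relative dimension $d$. It is enough to produce a locally free sheaf $\E$ of rank $d+1$ on $Y$ together with an isomorphism $(X,H)\cong(\PP(\E),\O_{\PP(\E)}(1))$ over $Y$: granting this, the projection $\PP(\E)\to Y$ is a $\PP^{d}$-bundle, its total space $X$ is smooth, hence $Y$ is smooth, and $(X,H)$ is a linear $\PP^{d}$-bundle with $\E=\varphi_{\ast}H$ in the sense recalled above. The candidate is $\E:=\varphi_{\ast}H$, and the counit $\varphi^{\ast}\varphi_{\ast}H\to H$ will provide the morphism $X\to\PP(\E)$.

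First I would analyse the scheme-theoretic fibres $F_{y}:=\varphi^{-1}(y)$ over the smooth locus $Y^{\circ}\subseteq Y$, whose complement has codimension $\geq 2$ because $Y$ is normal. Over $Y^{\circ}$ the morphism $\varphi$ is flat by ``miracle flatness'' ($X$ is Cohen--Macaulay, $Y^{\circ}$ is regular, and every fibre has the expected dimension $d$), so the fibres have constant Hilbert polynomial with respect to $H$, namely that of $(\PP^{d},\O_{\PP^{d}}(1))$; in particular $H^{d}\cdot[F_{y}]=1$. Since $H$ is ample, each $d$-dimensional component of $F_{y}$ contributes at least $1$ to this intersection number, so the cycle $[F_{y}]$ is a single reduced $d$-dimensional component $W$ with $H^{d}\cdot W=1$, and a reduced irreducible $d$-dimensional variety of degree $1$ is a linear $\PP^{d}$ on which $H$ restricts to $\O_{\PP^{d}}(1)$. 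Comparing Hilbert polynomials once more excludes embedded or lower-dimensional components and nilpotents, so $F_{y}\cong\PP^{d}$ for every $y\in Y^{\circ}$. Then cohomology and base change show that $\varphi_{\ast}H$ is locally free of rank $d+1$ over $Y^{\circ}$, its formation commuting with base change, and that $\varphi^{\ast}\varphi_{\ast}H\to H$ is surjective over $Y^{\circ}$, being fibrewise the surjection $H^{0}(\PP^{d},\O_{\PP^{d}}(1))\otimes\O_{\PP^{d}}\twoheadrightarrow\O_{\PP^{d}}(1)$; the resulting $Y^{\circ}$-morphism $\varphi^{-1}(Y^{\circ})\to\PP(\varphi_{\ast}H|_{Y^{\circ}})$ is quasi-finite and proper, hence finite, of degree $1$ (it is an isomorphism on a general fibre), and its target is normal, so it is an isomorphism.

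The crux is to extend this across $Y\setminus Y^{\circ}$, where ``miracle flatness'' is unavailable because the base is only normal. What one needs is that $\E=\varphi_{\ast}H$ is locally free of rank $d+1$ on all of $Y$ (equivalently, that every scheme-theoretic fibre of $\varphi$ is $\PP^{d}$); for then $\varphi^{\ast}\E\to H$ globally defines a finite birational $Y$-morphism $X\to\PP(\E)$ onto the $\PP^{d}$-bundle $\PP(\E)$, which is normal, hence the morphism is an isomorphism and all assertions follow. To control the behaviour over $Y\setminus Y^{\circ}$, I would first note that, $\varphi$ being equidimensional, $\varphi^{-1}$ preserves codimension, so $X\setminus\varphi^{-1}(Y^{\circ})$ has codimension $\geq 2$ in the smooth variety $X$; this forces $\varphi_{\ast}H$ to be an $S_{2}$ (hence reflexive) sheaf on $Y$, so it coincides with the reflexive hull of the rank-$(d+1)$ bundle $\varphi_{\ast}H|_{Y^{\circ}}$, and one then has to promote ``reflexive of rank $d+1$'' to ``locally free'' using the smoothness of $X$ and the bundle structure already established in codimension $1$. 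Alternatively, one may try to deduce flatness of $\varphi$ over the whole of the reduced base $Y$ from the constancy of the fibre Hilbert polynomial, and then rerun the previous paragraph over $Y$. This passage from codimension $1$ to all of $Y$ is where essentially all the work lies; everything else is formal.
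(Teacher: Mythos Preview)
The paper does not supply a proof of this statement; it is simply quoted as \cite[Lemma~2.12]{fujita} and used as a black box. So there is no in-paper argument to compare your sketch against beyond the bare citation.

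Your treatment over the smooth locus $Y^{\circ}$ is correct and standard: miracle flatness, the degree-one constraint forcing every scheme-theoretic fibre over $Y^{\circ}$ to be a linear $\PP^{d}$, cohomology and base change, and Zariski's main theorem combine to give the $\PP^{d}$-bundle structure there. You have also correctly located where the real content lies, namely the extension across $Y\setminus Y^{\circ}$.

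But that extension is not actually carried out, and neither of your two proposed routes closes the gap as written. Reflexivity of $\varphi_{\ast}H$ on the normal variety $Y$ does not by itself yield local freeness, and you offer no concrete mechanism by which ``smoothness of $X$ together with the bundle structure in codimension one'' promotes a reflexive sheaf on a possibly singular base to a bundle; on a general normal variety this implication simply fails. The alternative route---deduce flatness of $\varphi$ from constancy of the fibrewise Hilbert polynomial over a reduced base---is circular as stated: you only know the Hilbert polynomial over $Y^{\circ}$, and its value at the singular points of $Y$ is exactly what is in question. So the proposal has a genuine gap at precisely the step you flag as the hard one.

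The way Fujita-style arguments close it is to prove that $Y$ is smooth \emph{first}, by slicing, rather than to attack $\varphi_{\ast}H$ directly. One cuts $X$ by $d$ successive general members of $|H|$; at each stage Bertini keeps the slice smooth, ampleness of $H$ together with $H|_{F}\cong\O_{\PP^{d}}(1)$ guarantees that a general member meets every fibre properly, and the general fibre drops from $\PP^{k}$ to $\PP^{k-1}$. After $d$ cuts one obtains a smooth subvariety $Z\subset X$ with $\varphi|_{Z}:Z\to Y$ finite and of degree $H^{d}\cdot F=1$ over a general point; since $Y$ is normal, Zariski's main theorem makes $\varphi|_{Z}$ an isomorphism, whence $Y\cong Z$ is smooth. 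Once $Y$ is known to be smooth, miracle flatness applies over all of $Y$, and your first two paragraphs then finish the proof verbatim. This inductive slicing is the missing idea.
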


\begin{pro}[\cite{Wis}]\label{Wis} Let $X$ be a projective manifold. Let $\varphi_R$ be the contraction of an extremal ray $R \subset \overline{NE}(X)$, $E$ the exceptional locus of $\varphi_R$ and $F$ a component of a non-trivial fiber of $\varphi_R$. Then 
\begin{eqnarray}
\dim E \geq \dim X -1 +l(R)-\dim F, \nonumber
\end{eqnarray}
where $l(R):={\rm min} \{-K_X.C~|~C {\rm ~is~ a~ rational~ curve~ in~}  R\}$.
\end{pro}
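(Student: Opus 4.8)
The plan is to produce, through a general point $x$ of the fibre component $F$, a family $\V$ of rational curves of class $R$ whose total locus sits inside $E$ and whose curves through $x$ sweep out a subvariety of $F$, and then to read the inequality off from a dimension count of these two loci.

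First I set up the family. Since $\varphi_R$ is the contraction of a $K_X$-negative extremal ray, $-K_X$ is $\varphi_R$-ample, so every positive-dimensional fibre of $\varphi_R$ is covered by rational curves (see \cite{Ko}), all of which have numerical class in $R$. Choose $x\in F$ general, so that $F$ is the unique component of $\varphi_R^{-1}(\varphi_R(x))$ containing $x$; then there is a rational curve through $x$ with class in $R$. Let $\ell_x$ be the minimum of $-K_X\cdot C$ over rational curves $C\ni x$ with $[C]\in R$, so $\ell_x\geq l(R)$, and fix a curve $C_0$ realising $\ell_x$. By the standard deformation-theoretic lower bound for the space of rational curves on $X$, some irreducible component $\V$ of it through $[C_0]$ satisfies
\[
\dim\V\ \geq\ -K_X\cdot C_0+\dim X-3\ =\ \dim X+\ell_x-3 .
\]
Every member of $\V$ has class $[C_0]\in R$, hence is contracted by $\varphi_R$, so ${\rm Locus}(\V)\subseteq E$. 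Setting $\V_x:=\{[C]\in\V\mid x\in C\}$ (nonempty, since $[C_0]\in\V_x$), each component of ${\rm Locus}(\V_x)$ is irreducible, meets $x$, and lies in $\varphi_R^{-1}(\varphi_R(x))$, hence is contained in $F$; thus ${\rm Locus}(\V_x)\subseteq F$.

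Next comes the dimension count. The universal family ${\rm Univ}(\V)$ has dimension $\dim\V+1$, and over $x$ its evaluation map to $X$ has a fibre which is finite over $\V_x$, so $\dim{\rm Locus}(\V)\geq\dim\V+1-\dim\V_x$. I also claim $\dim{\rm Locus}(\V_x)\geq\dim\V_x+1$: otherwise a general point $z$ of ${\rm Locus}(\V_x)$ would lie on a positive-dimensional subfamily of $\V_x$, i.e.\ of rational curves through the two points $x$ and $z$, and bend-and-break applied to a general (hence irreducible) such curve would produce a rational curve through $x$ whose class is a summand of $[C_0]$ in $\overline{NE}(X)$ — hence in $R$ — and of anticanonical degree strictly smaller than $\ell_x$, contradicting the minimality of $\ell_x$. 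Combining the two estimates,
\begin{align*}
\dim E+\dim F &\ \geq\ \dim{\rm Locus}(\V)+\dim{\rm Locus}(\V_x) \\
&\ \geq\ (\dim\V+1-\dim\V_x)+(\dim\V_x+1)\ =\ \dim\V+2 \\
&\ \geq\ (\dim X+\ell_x-3)+2\ =\ \dim X+\ell_x-1\ \geq\ \dim X+l(R)-1 ,
\end{align*}
which is the assertion.

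The step I expect to be the main obstacle is the inequality $\dim{\rm Locus}(\V_x)\geq\dim\V_x+1$, that is, the bend-and-break argument: one must present the hypothetical positive-dimensional family of curves through the two fixed points $x$ and $z$ as an honest family of morphisms $\PP^1\to X$, degenerate it, and verify that the resulting reducible one-cycle genuinely has a rational component through $x$ of strictly smaller anticanonical degree — here the hypothesis that $R$ is $K_X$-negative is what guarantees that the discarded components contribute a strictly positive amount — with numerical class still in $R$. The remaining ingredients — the covering of $F$ by rational curves of class $R$, the bound $\dim\V\geq\dim X+\ell_x-3$, and the elementary fibre-dimension estimates — are routine applications of the deformation theory of rational curves recalled in \cite{Ko}.
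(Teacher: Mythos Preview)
The paper does not give its own proof of this proposition: it is simply quoted from Wi\'sniewski \cite{Wis}. So there is nothing to compare against in the paper itself.

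That said, your argument is essentially Wi\'sniewski's original proof (as reproduced, for instance, in \cite{De} or \cite{Ko}): choose a rational curve of minimal anticanonical degree through a general point $x\in F$, use the deformation-theoretic lower bound $\dim\V\geq -K_X\cdot C_0+\dim X-3$, observe that ${\rm Locus}(\V)\subseteq E$ and ${\rm Locus}(\V_x)\subseteq F$, and combine the two fibre-dimension estimates. The step you flag --- that bend-and-break yields a \emph{rational} component through $x$ of \emph{strictly} smaller $-K_X$-degree still lying in $R$ --- is indeed the crux, and your justification is the right one: extremality of $R$ forces every effective summand of the degenerate cycle to lie in $R$, hence to have positive $-K_X$-degree, so the component through $x$ is a proper summand; rationality of the components comes from working in (a compactification of) ${\rm Hom}(\PP^1,X)$, where limits are trees of $\PP^1$'s. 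One small point worth making explicit is that the inequality $\dim{\rm Locus}(\V)\geq\dim\V+1-\dim\V_x$ uses upper semicontinuity of fibre dimension: $x$ need not be general in ${\rm Locus}(\V)$, but the general fibre of the evaluation map can only be smaller than the fibre over $x$. With that understood, the proof is correct.
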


The following theorem plays an important role in our proof.

\begin{them}[{\cite[Theorem~3.3]{NO}}]\label{NO} Let $X \subset \PP^N$ be a smooth projective $n$-fold with a covering family of lines $\K$. Denote by $[\K]$ the numerical class of a member of $\K$. Assume that $-K_X.\K \geq \frac{n+1}{2}$. Then $[\K]$ spans an extremal ray of $NE(X)$.% and $M:=K_X+mH$ is the pull-back of a divisor by the contraction of the extremal ray $\RR_{\geq 0}[\K]$. Furthermore,  
 %if $M$ is not nef, then there exists an extremal ray $R \subset NE(X)$ satisfying the following conditions:
%\begin{enumerate}
%\item $M.R<0$,
%\item $NE(X)=\langle [\K],R \rangle$, and
%\item every non-trivial fiber of the contraction of the extremal ray $R$ has dimension $m$ and $m=\frac{n+1}{2}$.
%\end{enumerate}
\end{them}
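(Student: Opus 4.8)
The plan is to deduce this from the general machinery on families of rational curves developed in Kollár's book together with Wiśniewski-type inequalities, following the approach of Novelli--Occhetta. First I would recall that since $\K$ is a covering family of lines, it contains a free line, and for a general point $x \in X \setminus Z_\K$ the variety $\K_x$ is smooth of dimension $p = -K_X.\K - 2$. The hypothesis $-K_X.\K \geq \frac{n+1}{2}$ thus gives $\dim \C_x = p \geq \frac{n-3}{2}$, so the varieties of minimal rational tangents are reasonably large.

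The key point is to show that the numerical class $[\K]$ cannot be written as a nontrivial effective sum of two other classes in $NE(X)$, i.e. that it generates a face of dimension one. The standard tool here is to consider, for a general pair of points $x,y$, the locus of $\K$-chains (curves that are connected unions of $\K$-lines) through $x$ and $y$: one shows via a dimension count using the bend-and-break estimates (\cite[II]{Ko}) that if $[\K]$ were not extremal, one could glue members of $\K$ to members of the ``other'' families and break them, producing too many rational curves of low anticanonical degree through a general point, contradicting the dimension bound on the space of deformations. Concretely, if $[\K] = \alpha_1 + \alpha_2$ with $\alpha_i$ effective nonzero, then a member of $\K$ through a general point can be deformed keeping $x$ fixed into a reducible curve with components in the rays of the $\alpha_i$; counting the dimension of such deformations against $-K_X.\K - 2 = \dim \K_x$ forces a contradiction once $-K_X.\K \geq \frac{n+1}{2}$. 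This is exactly the content of the argument in \cite[Theorem~3.3]{NO}, which I would invoke after setting up the deformation-theoretic preliminaries (the smoothness of $F_1(X)$ at free lines and the dimension $\dim_{[l]} F_1(X) = -K_X.l + n - 3$ from \cite[Theorem~4.3.5]{Ser} and \cite[II.1.2]{Ko}).

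The hard part will be the gluing-and-breaking dimension estimate: one must be careful that the family $\K$ being only a component of $F_1(X)$ (not all lines) does not cause trouble, and that the non-free locus $Z_\K$ does not interfere, which is why one works at a general point where $\K_x$ is genuinely $p$-dimensional and smooth. The inequality $\frac{n+1}{2}$ is precisely what is needed so that $2\dim \K_x + 2 = 2(-K_X.\K) \geq n+1 > n$, which via a Reid--Mori type incidence argument (two general subvarieties of $X$ of total dimension exceeding $n$ must meet) guarantees that any two general $\K$-lines are joined by a connected $\K$-chain of bounded length; this connectedness is what ultimately prevents $[\K]$ from decomposing. Once $[\K]$ is known to span an extremal ray of $NE(X)$, and since $-K_X.\K > 0$, that ray is $K_X$-negative, so the Contraction Theorem applies; but the statement as given only asserts extremality, so I would stop there.
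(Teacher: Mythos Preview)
The paper does not prove this theorem at all: it is stated in the Preliminaries (Section~2.3) with attribution \cite[Theorem~3.3]{NO} and no proof is given. It is simply quoted as an external result of Novelli--Occhetta, to be applied later. So there is nothing in the paper to compare your argument against; in that sense your proposal, which ultimately says ``I would invoke \cite[Theorem~3.3]{NO},'' agrees with what the paper does.

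That said, the heuristic sketch you give around the citation is not quite the Novelli--Occhetta argument and contains a genuine misstep. You propose that if $[\K]=\alpha_1+\alpha_2$ then a member of $\K$ through a general point ``can be deformed keeping $x$ fixed into a reducible curve with components in the rays of the $\alpha_i$.'' But $\K$ consists of lines, i.e.\ curves of $H$-degree $1$; bend-and-break cannot produce a nontrivial effective decomposition of such a curve, since any component would have to have strictly smaller positive $H$-degree. The family $\K$ is \emph{unsplit} precisely for this reason, and that unsplitness is the starting point, not an obstacle, in \cite{NO}. Their actual mechanism is different: one uses that for an unsplit covering family the locus $\mathrm{Locus}(\K_x)$ has dimension at least $-K_X.\K-1$ and that every curve contained in it is numerically proportional to $[\K]$; then one chains these loci and compares with the loci built from curves in a putative second extremal class, obtaining a dimension contradiction once $-K_X.\K\geq\frac{n+1}{2}$. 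Your last paragraph (the $2(-K_X.\K)\geq n+1$ incidence count and $\K$-chain connectedness) is closer to the real idea than the bend-and-break paragraph; if you want to reconstruct the proof rather than cite it, that is the thread to pull on.
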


%\begin{proof} The first part of the assertion is \cite[Theorem~3.3]{NO}. It follows from \cite[Corollary~3.17]{KM} that $L$ is the pull-back of a divisor by the contraction of the extremal ray $\RR_{\geq 0}[\K]$. We see that the second part comes from their proof.
%\end{proof}

As a corollary of a generalization of Theorem~\ref{NO}, C.Novelli and G.Occhetta prove the following:

\begin{them}[{\cite[Corollary~4.4]{NO2}}]\label{NO2} Let $X \subset \PP^N$ be a projective manifold of dimension at most five and $\K$ a covering family of lines on $X$. Then $[\K]$ spans an extremal ray of $NE(X)$.
\end{them}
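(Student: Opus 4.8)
The plan is to reduce the statement to Theorem~\ref{NO} by controlling the anticanonical degree $-K_X.\K$ of the covering family. If $-K_X.\K \geq \frac{n+1}{2}$, Theorem~\ref{NO} applies directly and $[\K]$ spans an extremal ray of $NE(X)$; since $n \leq 5$ this inequality holds as soon as $-K_X.\K \geq 3$. So the heart of the argument is the remaining low-degree cases $-K_X.\K \in \{1,2\}$ (the degree is at least $1$ because $-K_X$ is positive on the covering family of lines, so $-K_X.\K \geq 2$ always unless $n$ is small, and in fact $-K_X.\K \geq 2$ by freeness of a general member, since $T_X|_l$ contains the ample $\mathscr{O}_{\PP^1}(2)$ summand). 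Thus the only genuinely open case is $-K_X.\K = 2$, with $n \in \{3,4,5\}$, where $\frac{n+1}{2} \geq 2$ fails only for $n=4,5$.

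For these remaining cases I would use the structure of the variety of minimal rational tangents $\C_x \subset \PP_{\ast}(T_xX)$, whose dimension $p$ satisfies $-K_X.\K = p+2$, so here $p = 0$, i.e.\ $\C_x$ is a finite set of points. Then one invokes the generalization of Theorem~\ref{NO} alluded to in the excerpt (``a generalization of Theorem~\ref{NO}''): rather than requiring $-K_X.\K$ large outright, one argues that a covering family of lines on a manifold of dimension $\leq 5$ cannot fail to span an extremal ray, by analyzing the possible non-extremal decompositions $[\K] = \alpha_1 + \alpha_2$ with $\alpha_i$ effective. Concretely, if $[\K]$ were not extremal, there would be another family of rational curves whose classes, together with $[\K]$, generate a $2$-dimensional face; one bounds the degrees and the dimension of the loci swept out, and in dimension $\leq 5$ this forces a contradiction with the covering property of $\K$ (e.g.\ via Proposition~\ref{Wis}, estimating $\dim E \geq \dim X - 1 + l(R) - \dim F$ for the putative contraction, combined with the fact that through a general point of $X$ there pass $\K$-lines filling up a positive-dimensional—or in the $p=0$ case, at least nonempty and mobile—family).

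The main obstacle is precisely the case $p = 0$ (equivalently $-K_X.\K = 2$) in dimensions $4$ and $5$, where Theorem~\ref{NO} gives nothing and one has the least geometric information: $\C_x$ is just finitely many points, so the VMRT machinery of Proposition~\ref{VMRT} does not apply. One must instead rely on a finer deformation-theoretic or Mori-theoretic argument—this is exactly what \cite[Corollary~4.4]{NO2} supplies via its generalization of \cite[Theorem~3.3]{NO}—to rule out the existence of a nontrivial effective decomposition of $[\K]$. Since the excerpt permits citing results stated earlier, and Theorem~\ref{NO2} is itself quoted as a black box from \cite{NO2}, a self-contained proof here would amount to reproving their generalization; the plan above indicates where the difficulty concentrates and which auxiliary results (Theorem~\ref{NO}, Proposition~\ref{Wis}, the VMRT dimension count) carry the load.
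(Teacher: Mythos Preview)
The paper does not prove Theorem~\ref{NO2}; it is quoted verbatim as \cite[Corollary~4.4]{NO2} and used as a black box, exactly as you yourself observe in your final paragraph. There is therefore no ``paper's own proof'' to compare your proposal against.

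That said, your sketch of how one might try to recover the result is reasonable in outline but not a proof. The reduction to Theorem~\ref{NO} is correct for $-K_X.\K \geq 3$ (and for $n\leq 3$ even $-K_X.\K=2$ suffices since $\frac{n+1}{2}\leq 2$), so the genuine content is the case $-K_X.\K=2$ in dimensions $4$ and $5$. Your plan for this case (``analyze non-extremal decompositions $[\K]=\alpha_1+\alpha_2$ and derive a contradiction via Proposition~\ref{Wis}'') is only a gesture: Proposition~\ref{Wis} bounds fibers of an extremal contraction, but if $[\K]$ is not extremal there is no contraction attached to $\RR_{\geq 0}[\K]$ to which that inequality applies, and the $0$-dimensional VMRT gives almost no leverage. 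The actual argument in \cite{NO2} goes through a chain-connectedness and rationally connected quotient analysis specific to covering families of lines (building on their generalization of \cite[Theorem~3.3]{NO}), not through a direct decomposition-and-dimension-count as you suggest. So your plan correctly isolates where the difficulty lies, but the mechanism you propose to resolve it would not work as stated.
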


\begin{rem} \rm Theorem~\ref{NO} and Theorem~\ref{NO2} hold in a slightly more general situation \cite{NO}. We do not mention the original result precisely, because we do not need it. %Proposition~\ref{NO3} and Theorem~\ref{NO2} below also hold in a more general situation. 

\end{rem}

%\begin{pro}\label{NO3} Under the assumption of Theorem~\ref{NO}, if $L$ is not nef and $n \geq 3$, then $X$ is a Fano manifold and the contraction of the extremal ray $\varphi _R$ is a linear $\PP^m$-bundle. 
%\end{pro}

%\begin{proof} By Theorem~\ref{NO}, there exists an extremal ray $R \subset NE(X)$ satisfying $M.R<0$ and $NE(X)=\langle [\K],R \rangle$. Moreover every non-trivial fiber $F$ of the contraction of the extremal ray $\varphi _R$ has dimension $m=\frac{n+1}{2}$. Since $-M$ is non-negative on $NE(X)=\langle [\K],R \rangle$, it is nef. Therefore $-K_X=-M+mH$ is very ample. The length $l(R)$ is greater than $m$ because $M.R<0$. Applying Proposition~\ref{Wis}, we see that 
%\begin{eqnarray}
%\dim E \geq (\dim X -1)+ (l(R) - \dim F) > \dim X -1. \nonumber
%\end{eqnarray}
%Hence the contraction of the extremal ray $\varphi_R$ is of fibering type. 

%We also see that 
%\begin{eqnarray}\label{e} 
%-K_F=-K_X|_F=-M|_F+mH|_F. 
%\end{eqnarray}
%This implies that the pseudo-index $i_F$ is at least $m+1$. It follows from Theorem~\ref{Wi} that $\rho_F=1$. More strictly, we see that ${\rm Pic}(F) \cong \ZZ$ because $F$ is a Fano manifold. As is well known, the index $r_F$ is at most $\dim F+1$ from Theorem~\ref{KO}. By the relation~(\ref{e}), we obtain $r_F = m+1$ and ${\rm Pic}(F) \cong \ZZ[H|_F]$. Theorem~\ref{KO} again implies that $(F,{\mathscr{O}}_H(1)|_F)$ is isomorphic to $(\PP^m,{\mathscr{O}}_{\PP^m}(1))$. By Proposition~\ref{ein}, $\varphi _R$ is a linear projective bundle.
%\end{proof}

\section{Projective manifolds swept out by cubic hypersurfaces}  

In this section, we prove Theorem~\ref{hyp}. 

\begin{lem}\label{famcubic} Let $X \subset \PP^N$ be as in Theorem~\ref{hyp}. Then there exists a covering family of lines $\K$ and a smooth cubic hypersurface $S_x$ through a general point $x \in X$ such that any line lying in $S_x$ is a member of $\K$.
\end{lem}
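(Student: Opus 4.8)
\emph{Proof plan.} The plan is to take $\K$ to be the irreducible component of $F_1(X)$ swept out by the varieties of lines of the smooth cubic $s$-folds contained in $X$, the whole incidence being organized through a flag Hilbert scheme. First I set up the parameter space of cubics. Let $P(t)$ be the Hilbert polynomial (with respect to $\O_X(1)$) of a smooth cubic hypersurface of dimension $s$. Since smoothness is an open condition on ${\rm Hilb}_{P(t)}(X)$, every smooth cubic $s$-fold contained in $X$ lies on an irreducible component of ${\rm Hilb}_{P(t)}(X)$ whose general member is again a smooth cubic $s$-fold; as such cubics cover a dense subset of $X$ by hypothesis and ${\rm Hilb}_{P(t)}(X)$ has finitely many irreducible components, one such component, call it $\mathcal{H}$, satisfies ${\rm Locus}(\mathcal{H})=X$. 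Let $\mathcal{H}^{\circ}\subseteq\mathcal{H}$ be the dense open subscheme parametrizing smooth cubic $s$-folds; a standard argument with Chevalley's theorem applied to the universal family shows that ${\rm Locus}(\mathcal{H}^{\circ})$ still contains a dense open subset of $X$.

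Next I pass to lines. Consider the incidence variety $\mathcal{F}:=\{([l],[S])\in F_1(X)\times\mathcal{H}^{\circ}\mid l\subset S\}$, a locally closed subscheme of ${\rm FH}_{\PP(t)}(X)$ with $\PP(t):=(t+1,P(t))$, together with the projections $\alpha\colon\mathcal{F}\to F_1(X)$ and $\beta\colon\mathcal{F}\to\mathcal{H}^{\circ}$. The fibre of $\beta$ over $[S]$ is the variety of lines $F_1(S)$, which for a smooth cubic hypersurface of dimension $s\geq 3$ is irreducible of dimension $2s-4$. Since $\mathcal{H}^{\circ}$ is irreducible and all fibres of $\beta$ are irreducible of the same dimension, $\mathcal{F}$ is irreducible, hence so is $\alpha(\mathcal{F})\subseteq F_1(X)$; I let $\K$ be an irreducible component of $F_1(X)$ containing $\alpha(\mathcal{F})$. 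Then $\K$ is a covering family: since $s\geq 2$, each smooth cubic $s$-fold is covered by its lines, so ${\rm Locus}(\K)\supseteq{\rm Locus}(\alpha(\mathcal{F}))=\bigcup_{[S]\in\mathcal{H}^{\circ}}S$, which is dense in $X$ by the previous paragraph, and ${\rm Locus}(\K)$ is closed. Finally, for a general $x\in X$ there is $[S_x]\in\mathcal{H}^{\circ}$ with $x\in S_x$, and $S_x$ is then a smooth cubic $s$-fold through $x$ every line $l$ of which satisfies $([l],[S_x])\in\mathcal{F}$, hence $[l]\in\alpha(\mathcal{F})\subseteq\K$, which is exactly what is claimed.

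The step I expect to require the most care is the first one: one must isolate a single component $\mathcal{H}$ of the Hilbert scheme that is at the same time generically a smooth cubic and dominant onto $X$, and then verify that a general point of $X$ still lies on a \emph{smooth} member of $\mathcal{H}$ — this is precisely where the constructibility of ${\rm Locus}(\mathcal{H}^{\circ})$ is used. Everything after that is formal, the only non-elementary input being the classical irreducibility of the variety of lines on a smooth cubic hypersurface of dimension at least $3$ (here $s>[\frac n2]+2\geq 4$). For later use in the proof of Theorem~\ref{hyp}, I also record that the variety of minimal rational tangents of $\K$ at a general point contains that of $S_x$, of dimension $s-3\geq[\frac n2]$, which is the largeness needed to apply Theorem~\ref{NO} and Proposition~\ref{VMRT}.
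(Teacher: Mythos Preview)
Your proof is correct and follows essentially the same route as the paper: both set up the flag Hilbert scheme of pairs $(\text{line},\text{cubic})$, pick an irreducible family $\mathcal{H}^{\circ}$ of smooth cubic $s$-folds sweeping out $X$, and use the classical irreducibility of $F_1(S)$ for a smooth cubic hypersurface $S$ of dimension $\geq 3$ (the paper cites \cite{AK}). The only organizational difference is that you deduce irreducibility of the whole incidence $\mathcal{F}$ from the equidimensional irreducible fibres of $\beta$ over the irreducible base $\mathcal{H}^{\circ}$ and then pass to a component of $F_1(X)$, whereas the paper first chooses an irreducible component $\K^0$ of the image $p_1(p_2^{-1}(\mathcal{H}^{\circ}))$ and then checks, via a generically chosen pair $(l_x,S_x)$, that the irreducible $F_1(S_x)$ must land inside $\K^0$; your packaging is slightly more streamlined but the content is the same.
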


\begin{proof} Let $P_1(t)$, $P_2(t)$ be the Hilbert polynomials of a line, an $s$-dimensional smooth cubic hypersurface, respectively and set $\PP(t):=(P_1(t),P_2(t))$. We denote the natural projections by 
\begin{eqnarray}
p_i: {\rm FH}_{\PP(t)}(X) \rightarrow {\rm Hilb}_{P_i(t)}(X),~{\rm where}~i=1,2. \nonumber 
\end{eqnarray} 

Let $\H_2$ be the open subscheme of ${\rm Hilb}_{P_2(t)}(X)$ parametrizing smooth subvarieties of $X$ with Hilbert polynomial $P_2(t)$. 
According to \cite{X}, a smooth projective $s$-fold with Hilbert polynomial $P_2(t)$ is a cubic hypersurface. 
Hence $\H_2$ parametrizes $s$-dimensional cubic hypersurfaces lying on $X$. 
 From our assumption, there is an irreducible component $\H_2^0$ of $\H_2$ such that $\overline{{\rm Locus}(\H_2^0)}=X$. Then $p_1(p_2^{-1}(\H_2^0))$ parametrizes lines lying on smooth cubic hypersurfaces in $\H_2^0$. Take an irreducible component $\K^0$ of $p_1(p_2^{-1}(\H_2^0))$ such that $\overline{{\rm Locus}(\K^0)}=X$. 
Through a general point $x$ on $X$, there is a  $\K^0$-line $l_x$ which is not contained in any irreducible component of $p_1(p_2^{-1}(\H_2^0))$ except $\K^0$. Furthermore there is also a smooth cubic hypersurface $S_x$ which satisfies with $[S_x]$ in $\H_2^0$ and $l_x \subset S_x$. Because $p_1(p_2^{-1}([S_x]))$ is the Hilbert scheme of lines on $S_x$, it is irreducible (see \cite[Theorem~1.10, Theorem~1.16, Proposition~1.19]{AK}). Therefore $p_1(p_2^{-1}([S_x]))$ is contained in an irreducible component of $p_1(p_2^{-1}(\H_2^0))$. Since $p_1(p_2^{-1}([S_x]))$ contains $[l_x]$, this implies that $p_1(p_2^{-1}([S_x]))$ is contained in $\K^0$. 
Thus we set $\K$ to be an irreducible component of $F_1(X)={\rm Hilb}_{P_1(t)}(X)$ containing $\K^0$.
\end{proof}

\begin{pro}[{\cite[Chap. I. Proposition~2.16]{Za}}]\label{Zak} Let $X \subset \PP^N$ be a nondegenerate smooth projective $n$-fold and $Y$ an $r$-dimensional subvariety of $X$ such that ${\rm codim}_{\langle Y \rangle}(Y)<{\rm codim}_{\PP^N}(X)=N-n$, where $\langle Y \rangle$ is the linear span of $Y$. Then $r \leq {\rm min} \{ n-1, [\frac{N-1}{2}] \}$. 
\end{pro}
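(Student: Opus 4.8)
The plan is to run Zak's classical "linear projection" argument adapted to the subvariety $Y \subset X \subset \PP^N$. First I would set $m := \dim\langle Y\rangle$ and $r := \dim Y$, so the hypothesis reads $m - r < N - n$, i.e. $m < N - n + r$. Pick a general point $y \in Y \subset \langle Y\rangle$ and consider the projection $\pi \colon \PP^N \dashrightarrow \PP^{N-1}$ from $y$, together with its restriction to $X$. The key point is a dimension count on the fibers of $\pi|_X$: the locus in $X$ where $\pi|_X$ fails to be finite is governed by lines through $y$ contained in $X$, and one studies the entry locus / secant behavior of $X$ relative to the subvariety $Y$. The engine here is the Fulton–Hansen connectedness theorem (which the paper has already invoked, \cite{FH}): applied to the map $X \times X \to \PP^N \times \PP^N$ or to an appropriate incidence correspondence, it forces that if $X$ has small codimension then the "bad" locus where the projection from a general point of $X$ drops dimension must be connected of large dimension, and similarly it controls how $Y$ sits inside its own span.

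The main steps, in order, would be: (1) reduce to the generic projection from a general point $y \in Y$ and record that $\langle Y\rangle$ spans a $\PP^m$ with $m \le N-1$ after projecting, while $Y$ projects to a variety whose span drops by exactly one if $m - r < \dots$; (2) invoke Zak's theorem on tangencies / the Fulton–Hansen-type connectedness to show that the entry locus $\Sigma_y(X)$ of a general point through $Y$ — that is, the union of lines through $y$ meeting $X$ again — has dimension $\ge 2n - N + (\text{correction from } Y)$; (3) combine with the hypothesis $\operatorname{codim}_{\langle Y\rangle}(Y) < \operatorname{codim}_{\PP^N}(X)$ to conclude that this entry locus would be positive-dimensional, hence $X$ would contain lines through a general point of $Y$ in a controlled way, forcing either $X$ to be degenerate (contradicting nondegeneracy, giving $r \le n-1$ automatically) or giving the numerical bound $2r + 1 \le N - 1$, i.e. $r \le [\frac{N-1}{2}]$; (4) the bound $r \le n-1$ is immediate since $Y \subsetneq X$ ($Y$ cannot be all of $X$ as $\operatorname{codim}_{\langle Y\rangle} Y < \operatorname{codim}_{\PP^N} X$ would then fail, or because $X$ is nondegenerate). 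Taking the minimum of the two bounds yields the claim.

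Since this is Zak's Proposition~2.16 from \cite{Za}, the honest thing in the paper is to cite it, but if a proof sketch is wanted the skeleton above is what I would write out. The hard part — and the step I expect to be the genuine obstacle — is step (2): making precise the dimension estimate on the entry locus of a general point of $Y$ inside $X$, because one must carefully separate the contribution of "lines through $y$ inside $\langle Y\rangle$" from "lines through $y$ inside $X$", and the connectedness theorem only gives a lower bound on a union of loci that one then has to disentangle. One also has to be slightly careful that $y$ general in $Y$ is also general enough in $X$ for the tangency/projection statements about $X$ to apply; this is where nondegeneracy of $X$ and smoothness enter. Everything else (the final numerics combining $m - r < N - n$ with the entry-locus dimension) is a routine inequality chase.
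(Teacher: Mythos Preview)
The paper gives no proof of this proposition at all: it is stated with the attribution \cite[Chap.~I, Proposition~2.16]{Za} and used as a black box. So there is nothing to compare your argument against beyond the citation, and you yourself correctly note that ``the honest thing in the paper is to cite it.'' On that point you and the author agree.

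As for the sketch you add on top of the citation: the ingredients you name (projection from a general point, Fulton--Hansen connectedness, Zak's theorem on tangencies) are indeed the tools Zak uses in Chapter~I of \cite{Za}, so the flavor is right. But the sketch does not actually connect the hypothesis ${\rm codim}_{\langle Y\rangle}(Y) < {\rm codim}_{\PP^N}(X)$ to the conclusion $2r \le N-1$. Your step~(2) promises an entry-locus bound ``$\ge 2n - N + (\text{correction from }Y)$'' without ever saying what the correction is, and step~(3) asserts the final inequality without showing how $m - r < N - n$ enters the chase. In Zak's actual argument the hypothesis is used through the observation that for a smooth point $y \in Y$ one has $T_yY \subset T_yX \cap \langle Y\rangle$, so the span $\langle T_yX, \langle Y\rangle\rangle$ has dimension at most $n + (m - r) < N$; this produces a \emph{proper} linear space containing both $\langle Y\rangle$ and a tangent space of $X$, to which the tangency theorem is then applied. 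Your sketch never isolates this step, and without it the numerics do not close. The bound $r \le n-1$, on the other hand, you handle correctly: $Y = X$ would force $\langle Y\rangle = \PP^N$ by nondegeneracy, contradicting the strict inequality.
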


\begin{proof}[Proof of Theorem~\ref{hyp}] We employ the notation as in Lemma~\ref{famcubic}. We denote $-K_X.\K$ by $m$ and a hyperplane section of $X \subset \PP^N$ by $H$. Set $L:=K_X+mH$. From a  fundamental deformation theory \cite[Theorem~4.3.5]{Ser}, we see that $\dim F_1(x, S_x) \geq s-3$.  
Since $F_1(x,S_x)$ is contained in $\K_x:=\{[l] \in \K| x \in l\}$ for a general point $x$ on $X$, we have  
\begin{eqnarray}\label{mi} 
m = \dim \K_x +2 \geq \dim F_1(x,S_x) +2 \geq s -1 \geq [\frac{n}{2}] +2 \geq \frac{n+3}{2}.  
\end{eqnarray}
As we noted in Section~\ref{l}, $\K_x$ is a disjoint union of smooth varieties of dimension $m-2$ and it is embedded into $\PP_{\ast}(T_xX)$ by the tangent map. From the inequality (\ref{mi}), we have $2(m-2) \geq n-1=\dim \PP_{\ast}(T_xX)$. Hence $\K_x$ is irreducible by the projective dimension theorem. 

By Theorem~\ref{NO}, $[\K]$ spans an extremal ray of $NE(X)$ and $L$ is the pull-back of a divisor by the extremal contraction $\varphi_{\K}:X \rightarrow Y$ associated to $\RR_{\geq 0}[\K]$.
For a general fiber $F$ of $\varphi_{\K}$, we have $K_F=K_X|_F=-mH|_F$. Hence the pseudo-index $i_F$ of $F$  satisfies $i_F \geq m \geq \frac{n+3}{2}$. By virtue of Theorem~\ref{Wi}, we conclude that $\rho_F=1$. 

We show that $r_F \geq f-1$, where $f:= \dim F$ and $r_F$ is the index of $F$. 
 We define $\K_F$ as the set of $\K$-lines which meet $F$, that is, $\K_F:=\{[l] \in \K|l\cap F\neq  \emptyset\}$. Since every $\K$-line is contracted to a point by $\varphi_{\K}$, we see that $\K_F=\{[l] \in \K| l \subset F\}$. From the construction, it follows that $\K_F$ is proper and ${\rm Locus}(\K_F)=F$. This implies that there is an irreducible component $\K_F^0$ of $\K_F$ such that ${\rm Locus}(\K_F^0)=F$. Therefore there exists a covering family ${\mathscr{L}}$ of lines on $F$ containing $\K_F^0$. 
Let $x$ be a general point on $F$ and set ${\mathscr{L}}_x:=\{[l] \in {\mathscr{L}}|x \in l\}$. Given any ${\mathscr{L}}_x$-line, it is free in $F$. This implies that the Hilbert scheme of lines $F_1(F)$ is smooth at $[l_x]$. In particular, there exists a unique irreducible component of $F_1(F)$ containing $[l_x]$. $\K_x$ is contained in ${\mathscr{L}}$, because it is irreducible and $\K_x \cap {\mathscr{L}}$ is not empty. So we have $\dim {\mathscr{L}}_x \geq \dim \K_x \geq \frac{n-1}{2}$. 
From Proposition~\ref{VMRT}, it follows that ${\mathscr{L}}_x \subset \PP_{\ast}(T_xF)$ is a nondegenerate projective manifold. Let $Z$ be an irreducible component of $F_1(x,S_x)$ with $\dim Z= \dim F_1(x,S_x)$. Remark that $\dim Z \geq s-3$ and $\dim {\mathscr{L}}_x=r_F-2$.
Now we apply Proposition~\ref{Zak} to $Z \subset {\mathscr{L}}_x \subset \PP_{\ast}(T_xF)$. It implies that 
\begin{eqnarray}\label{} 
2={\rm codim}_{\langle Z \rangle} Z \geq {{\rm codim}_{\langle {\mathscr{L}}_x \rangle}}{\mathscr{L}}_x= f +1 -r_F. \nonumber
\end{eqnarray}

According to Theorem~\ref{KO}, it follows that $r_F \leq f+1$ and if $r_X$ is  equal to $f +1$, respectively $f$, then $F$ is $\PP^f$, respectively a smooth quadric $Q^f$. In the case where $F$ is $\PP^f$, we see that $m=f+1$. %Hence $(X,H)$ is a scroll in the sense of Section~\ref{extcont}.  
We have 
\begin{eqnarray}\label{} 
f \geq s+1 \geq   [\frac{n}{2}]+4 \geq \frac{n+7}{2}.  \nonumber
\end{eqnarray}
It follows from Theorem~\ref{ein} that $(X,H)$ is a linear projective bundle.

We show that $F$ is not isomorphic to $Q^f$. Assume the contrary. 
Then $Q^f$ contains a smooth cubic hypersurface $S_x$ of dimension $s > [\frac{n}{2}]+2$. If $Q^f$ does not contain the linear span $\langle S_x \rangle$, an $s$-dimensional quadric $Q^f \cap \langle S_x \rangle$ contains a cubic $S_x$, a contradiction. Hence $Q^f$ contains a linear subspace $\langle S_x \rangle$ of dimension $s+1>[\frac{n}{2}]+3$. However an $f$-dimensional smooth quadric does not contain a linear subspace of dimension $> [\frac{f}{2}]$. So this also gives rise to a contradiction. As a consequence, $F$ is not isomorphic to $Q^f$.

In the case where $r_F=f-1$, $F$ is one of manifolds in Theorem~\ref{del}.  Since $F$ contains a smooth cubic hypersurface $S_x$ of dimension $s > [\frac{n}{2}]+2$, it follows from Proposition~\ref{Zak} that $F$ is neither a complete intersection of two quadric hypersurfaces nor a linear section of the Grassmann variety $G(2,\CC^5) \subset \PP_{\ast}(\bigwedge^2\CC^5)$. If $F$ is a cubic hypersurface, then $\varphi_{\K}:X \rightarrow Y$ is a cubic fibration.

Thus we have completed the proof of Theorem~\ref{hyp}.
\end{proof}

\section{Projective $5$-folds swept out by planes}

In this section, we prove the following:

\begin{them}\label{plane} Let $X \subset \PP^N$ be a smooth projective $5$-fold such that there is a plane $\PP^2 \subset X$ through a general point of $X$. Then $X$ is one of the following:
\begin{enumerate}
\item a linear $\PP^d$-bundle $\varphi :X \rightarrow Y$, where $d \geq 2$,
\item a smooth quadric $Q^5$, 
\item a complete intersection of two quadrics in $\PP^7$,
\item a hyperplane section of the Grassmann variety $G(2,\CC^5) \subset \PP_{\ast}(\bigwedge^2\CC^5)$, 
\item $X$ admits a contraction $\varphi :X \rightarrow C$ of an extremal ray to a smooth curve and, for the locally free sheaf $\E:=\varphi _{\ast}{\mathscr{O}}_X(1)$, $X$ embeds over $C$ into $\PP(\E)$ as a divisor of relative degree $2$, or
\item $X$ admits a contraction $\varphi  :X \rightarrow C$ of an extremal ray to a smooth curve whose general fiber is isomorphic to $\PP^2 \times \PP^2$.
\end{enumerate}

\end{them}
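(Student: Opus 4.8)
The plan is to follow the strategy announced in the introduction: produce a covering family of lines on $X$ coming from lines lying in the planes that sweep out $X$, show it spans an extremal ray, and then analyze the resulting extremal contraction fiber by fiber. First I would argue, as in Lemma~\ref{famcubic} but with a plane $\PP^2$ in place of a cubic hypersurface, that there is a covering family of lines $\K$ on $X$ such that through a general point $x\in X$ there is a plane $S_x\subset X$ all of whose lines are $\K$-lines; here the role played by the irreducibility of $F_1(S_x)$ in Lemma~\ref{famcubic} is trivial since $F_1(x,\PP^2)\cong\PP^1$. From the deformation estimate $\dim F_1(x,S_x)\ge 2\cdot 2-(2-1)-2 = 1$ (lines in $\PP^2$), we get $\dim\K_x\ge 1$, hence $m:=-K_X.\K = \dim\K_x+2\ge 3 = \frac{5+1}{2}$, so by Theorem~\ref{NO} (or directly Theorem~\ref{NO2}, since $\dim X=5$) the class $[\K]$ spans an extremal ray $R$ of $NE(X)$. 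Let $\varphi:=\varphi_R:X\to Y$ be its contraction; it is of fibering type because $\K$ is a covering family.

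Next I would study a general fiber $F$ of $\varphi$, of dimension $f:=\dim F = 5-\dim Y$. We have $K_F=K_X|_F=-mH|_F$, so $F$ is Fano with $i_F\ge m\ge 3$ and index $r_F\ge m\ge 3$. The main dividing line is the value of $\dim Y$.
\textbf{Case $\dim Y=0$:} Then $X=F$ is Fano of dimension $5$ with $i_X\ge 3 > \frac{5}{2}$, so $\rho_X=1$ by Theorem~\ref{Wi}(i); as in the proof of Theorem~\ref{hyp} one runs $Z\subset\C_x\subset\PP_*(T_xX)$ through Proposition~\ref{Zak} (with $Z$ a component of $F_1(x,S_x)$, so $\dim Z\ge 1$ and $\mathrm{codim}_{\langle Z\rangle}Z\le 2$) to get $r_X\ge f-1 = 4$. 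Theorem~\ref{KO} and Theorem~\ref{del} then leave exactly $\PP^5$, $Q^5$, a complete intersection of two quadrics in $\PP^7$, and a hyperplane section of $G(2,\CC^5)$; $\PP^5$ is a (trivial) linear $\PP^d$-bundle, giving cases (i)--(iv). (One should check $\rho_X=1$ forces $\mathrm{Pic}(X)$ generated by $\O_X(1)$, which is needed to apply Proposition~\ref{VMRT} and Theorem~\ref{del}.)
\textbf{Case $\dim Y\ge 1$, so $f\le 4$:} Combine $r_F\ge m\ge 3$ with $f\le 4$. If $f=4$: $r_F\in\{3,4,5\}$; $r_F=5$ gives $F=\PP^4$, $r_F=4$ gives $F=Q^4$, and $r_F=3=f-1$ puts $F$ among the del Pezzo fourfolds of Theorem~\ref{del}. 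If $f=3$: $r_F\ge 3$ forces $r_F\in\{3,4\}$, i.e. $F=Q^3$ ($r_F=3=f$) or $F=\PP^3$ ($r_F=4=f+1$). If $f\le 2$, then $r_F\ge 3 > f$ is impossible unless $f\le 1$; $f=1$ forces $m\ge 3$ but $-K_{\PP^1}.\PP^1=2$, contradiction, and $f=0$ is excluded since $\varphi$ is of fibering type — so $f\ge 3$. When $F=\PP^f$ with $f\ge 3$ and $m=f+1$, I would try to promote this to a global linear $\PP^d$-bundle structure: if $\dim Y=1$ ($f=4$) this should follow from Proposition~\ref{fujita} after checking all fibers are $\PP^4$ (or from Theorem~\ref{ein} when the numerics permit), landing in case (i); if $\dim Y=2$ ($f=3$, $m=4$) similarly. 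When $F=Q^f$ I expect to be able to promote to a quadric fibration over a curve when $\dim Y=1$, i.e. case (v) with $X\hookrightarrow\PP(\E)$ of relative degree $2$; the case $F=Q^3$ over a surface would need to be shown not to occur or absorbed elsewhere. When $F$ is a del Pezzo fourfold (necessarily over a curve), Theorem~\ref{del} lists a cubic fourfold, a $(2,2)$-complete intersection, or a linear section of $G(2,\CC^5)$ — I would need to rule these out or show they force a global product structure; and the remaining genuinely new output is $F\cong\PP^2\times\PP^2$ over a curve, case (vi), which by Theorem~\ref{Wi}(ii) is exactly the $\rho_F\ge 2$ situation with $r_F=f/2+1=3$.

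The main obstacle, as flagged in the introduction, is the $\dim Y\ge 1$ analysis: passing from "the general fiber $F$ is $\PP^f$ / $Q^f$ / $\PP^2\times\PP^2$" to a genuine global structure statement (linear bundle, quadric fibration of relative degree $2$, or a fibration onto a \emph{curve} with those fibers), and correctly handling the base dimensions $\dim Y\in\{1,2\}$. This requires controlling the singular fibers (e.g. via Proposition~\ref{Wis} with $l(R)\ge m$ to bound their dimension) and, in the $\PP^2\times\PP^2$ case, ruling out base surfaces so that one genuinely lands on a curve; it also requires excluding the spurious del Pezzo fourfold fibers, for which the presence of the sweeping plane $S_x$ together with Proposition~\ref{Zak} (as in the proof of Theorem~\ref{hyp}) should again be the tool. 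A secondary subtlety is verifying that in the Picard-number-one cases the hyperplane bundle generates the Picard group, which is what makes Theorem~\ref{del} and Proposition~\ref{VMRT} applicable; this uses that $X$ is nondegenerate and swept out by planes, hence linearly normal considerations force $\O_X(1)$ to be primitive.
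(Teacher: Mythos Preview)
Your overall strategy matches the paper's, but there are two genuine gaps.

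\textbf{First gap: ruling out $r_X=3$ (and del Pezzo $4$-fold fibers) via Proposition~\ref{Zak}.} To apply Proposition~\ref{Zak} to $Z\subset\C_x\subset\PP_*(T_xX)$ you need $\C_x$ to be a \emph{nondegenerate smooth irreducible} subvariety of $\PP_*(T_xX)$. In the proof of Theorem~\ref{hyp} this is supplied by Proposition~\ref{VMRT}, whose hypothesis is $\dim\C_x\ge\frac{n-1}{2}$. Here, if $r_X=3$ then $\dim\C_x=r_X-2=1<2=\frac{5-1}{2}$, so Proposition~\ref{VMRT} does not apply and you have no control over the irreducibility or nondegeneracy of $\C_x$; a priori $\C_x$ could simply be the line $\PP_*(T_xS_x)$, in which case Zak's inequality is vacuous. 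The paper circumvents this by invoking Mukai's classification of Fano $5$-folds of index $3$ with very ample fundamental divisor and checking, case by case (complete intersections, the Gushel--Mukai type, linear sections of homogeneous spaces), that $F_1(x,X)$ is an irreducible curve of degree $>1$; since it contains the line $\PP_*(T_xS_x)$ and has dimension $1$, this is a contradiction. The same obstruction, and the same style of remedy (now running through the short list of Theorem~\ref{del}), applies to your proposed exclusion of del Pezzo $4$-fold fibers. Note also that in the $\rho_X=1$ case the cubic $5$-fold is on the list of Theorem~\ref{del} and must be excluded separately; the paper does this by a dimension count on the Hilbert scheme of planes in a cubic $5$-fold.

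\textbf{Second gap: the case $f=2$.} Your sentence ``If $f\le 2$, then $r_F\ge 3>f$ is impossible unless $f\le 1$'' is wrong: when $f=2$ one has $r_F\le f+1=3$ by Theorem~\ref{KO}, so $r_F=3$ and $F\cong\PP^2$ is perfectly consistent. You have therefore omitted precisely the case where the general fiber of $\varphi$ is a plane over a $3$-dimensional base. This is in fact the most delicate step of the whole proof (cf.\ Remark~\ref{takagi}): one must show that \emph{every} fiber of $\varphi$ is $2$-dimensional, so that Proposition~\ref{fujita} gives a linear $\PP^2$-bundle. Theorem~\ref{ein} is of no help since $n=5>2f-1=3$, and Proposition~\ref{Wis} with $l(R)\ge 3$ only bounds jumping fibers by $\dim F_0\le 4$. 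The paper argues by contradiction: a $4$-dimensional jumping fiber is excluded by an elementary negativity-of-self-intersection argument after slicing, while a $3$-dimensional jumping fiber is excluded by slicing $X$ with two general hyperplanes to obtain a birational contraction from a smooth $3$-fold with relatively ample $-K$, invoking Cutkosky's classification of such contractions to see it is a single blow-up along a smooth curve, and then deriving a contradiction via \cite[Proposition~3.2.3]{BSW}.
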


\subsection{Family of lines}

\begin{lem}\label{fam} Let $X \subset \PP^N$ be as in Theorem~\ref{plane}. Then there exists a covering family of lines $\K$ and a plane $P_x$ through a general point $x\in X$ such that any line lying in $P_x$ is a member of $\K$.
\end{lem}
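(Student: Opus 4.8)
The plan is to mimic the proof of Lemma~\ref{famcubic}, replacing the role of $s$-dimensional cubic hypersurfaces by planes $\PP^2$. First I would set up the relevant Hilbert schemes: let $P_1(t)$ be the Hilbert polynomial of a line and $P_2(t)$ the Hilbert polynomial of a plane $\PP^2 \subset \PP^N$, put $\PP(t):=(P_1(t),P_2(t))$, and consider the flag Hilbert scheme ${\rm FH}_{\PP(t)}(X)$ with its two projections $p_i : {\rm FH}_{\PP(t)}(X) \rightarrow {\rm Hilb}_{P_i(t)}(X)$. A smooth projective surface with Hilbert polynomial $P_2(t)$ is necessarily a plane, so if $\H_2$ denotes the open subscheme of ${\rm Hilb}_{P_2(t)}(X)$ parametrizing smooth subvarieties, then $\H_2$ parametrizes exactly the planes contained in $X$.

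Next I would use the hypothesis that a plane passes through a general point of $X$ to produce an irreducible component $\H_2^0 \subseteq \H_2$ with $\overline{{\rm Locus}(\H_2^0)} = X$. Then $p_1(p_2^{-1}(\H_2^0))$ parametrizes lines lying on planes of $\H_2^0$, and I would choose an irreducible component $\K^0$ of it with $\overline{{\rm Locus}(\K^0)} = X$. For a general point $x \in X$ there is then a $\K^0$-line $l_x$ not contained in any other irreducible component of $p_1(p_2^{-1}(\H_2^0))$, together with a plane $P_x$ with $[P_x] \in \H_2^0$ and $l_x \subset P_x$. The crucial observation is that $p_1(p_2^{-1}([P_x]))$ is the Hilbert scheme of lines on $P_x \cong \PP^2$, which is $\check{\PP}^2$ and in particular irreducible; hence it lies in a single irreducible component of $p_1(p_2^{-1}(\H_2^0))$, and since it contains $[l_x]$ that component must be $\K^0$. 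Therefore every line in $P_x$ lies in $\K^0$. Finally I would take $\K$ to be the irreducible component of $F_1(X) = {\rm Hilb}_{P_1(t)}(X)$ containing $\K^0$; then $\K$ is a covering family of lines and every line in $P_x$ is a $\K$-line.

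The only real point requiring care — and the analogue of the step invoking \cite{AK} in Lemma~\ref{famcubic} — is the irreducibility of the Hilbert scheme of lines on the fiber $P_x$; but here this is immediate since $P_x$ is a plane and its lines form $\check{\PP}^2$, which is considerably simpler than the cubic case. The remaining subtleties are the standard generic-flatness and generic-smoothness arguments used to guarantee the existence of $l_x$ avoiding the other components and of $P_x$ passing through $x$ with $l_x \subset P_x$; these are routine and identical in spirit to those in the proof of Lemma~\ref{famcubic}, so I would not belabor them. Thus the proof is essentially a transcription of that argument with $\PP^2$ in place of the smooth cubic $s$-fold.
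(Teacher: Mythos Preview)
Your proposal is correct and matches the paper's own proof essentially verbatim: the paper simply remarks that the Hilbert scheme of lines on a plane is $G(2,\CC^3)$, hence irreducible, and then says the lemma follows from the same argument as in Lemma~\ref{famcubic}. Your write-up spells out that argument in full and identifies the same key point (the irreducibility of $F_1(P_x)\cong\check\PP^2$), so there is nothing to add.
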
 

\begin{proof} Remark that the Hilbert scheme of lines on a plane is isomorphic to $G(2,\CC^3)$. In particular, it is irreducible. This lemma follows from the same argument as in Lemma~\ref{famcubic}.
\end{proof} 

\begin{nota}\label{not} \rm Let $X$, $\K$ and $P_x$ be as in Lemma~\ref{fam}. Let $\C_x$ be the variety of minimal rational tangents $\C_x$ with respect to $\K$ at a general point $x$ on $X$. We denote $-K_X.\K$ by $m$ and $K_X+mH$ by $L$, where $H$ is a hyperplane section of $X \subset \PP^N$. Throughout this section, we use these notation. 
\end{nota}

\begin{lem}\label{m} $\C_x$ contains a projective line $\PP^1$. In particular, $m \geq 3$.
\end{lem}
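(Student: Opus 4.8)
The plan is to exploit the fact, recorded in Lemma~\ref{fam}, that through a general point $x \in X$ there is a plane $P_x$ with $x \in P_x \subset X$ all of whose lines belong to $\K$. First I would observe that the lines of $P_x$ through $x$ sweep out $P_x$, so they form a pencil, i.e. a copy of $\PP^1$ inside $\K_x$; pushing this forward by the tangent map $\tau_x$ (which is an embedding by \cite[Proposition~1.5]{Hw2}) gives a projective line $\PP^1 \subset \C_x \subset \PP_{\ast}(T_xX)$. This is essentially immediate from the construction of $P_x$ and $\K$.

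For the numerical consequence $m \geq 3$, recall from Section~\ref{l} that for a general $x$ the family $\K_x$ is a disjoint union of smooth varieties of dimension $m-2$, embedded via $\tau_x$, and $\C_x = \tau_x(\K_x)$. Since $\C_x$ contains a $\PP^1$, we get $\dim \C_x \geq 1$, hence $m - 2 \geq 1$, that is $m \geq 3$. Alternatively, and perhaps more transparently, for a line $\ell$ through $x$ lying in $P_x$ one has the inclusion $T_{P_x}|_\ell \hookrightarrow T_X|_\ell$; since $P_x \cong \PP^2$ we have $T_{P_x}|_\ell \cong \O_{\PP^1}(2) \oplus \O_{\PP^1}(1)$, and for a general $x$ the line $\ell$ is free, so $T_X|_\ell \cong \O_{\PP^1}(2) \oplus \O_{\PP^1}(1)^p \oplus \O_{\PP^1}^{n-1-p}$ with $p \geq 1$. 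Then $-K_X.\K = -K_X.\ell = \deg(T_X|_\ell) = 2 + p \geq 3$.

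I do not anticipate a serious obstacle here; the only point requiring a word of care is the genericity of $x$, needed to guarantee that $\ell$ is a free line (so that the splitting type of $T_X|_\ell$ has the stated shape and $\C_x$ is the VMRT at $x$). Both of these are already set up in Section~\ref{l}. The first argument (producing the $\PP^1$ in $\C_x$ directly) and the second (the normal bundle / degree computation) each suffice, and I would present the geometric one first and then deduce $m\geq 3$ from $\dim\C_x\geq 1$, mentioning the splitting-type computation as the bookkeeping that turns $\dim\C_x\geq 1$ into the inequality $m\geq 3$.
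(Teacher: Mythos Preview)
Your proposal is correct and follows essentially the same approach as the paper: the paper observes that the pencil of lines in $P_x$ through $x$ has image $\PP_{\ast}(T_xP_x)$ under the tangent map, giving a projective line inside $\C_x$, and then concludes from $m=\dim\C_x+2$. Your splitting-type computation for $T_X|_\ell$ is an unnecessary but harmless addendum; the paper dispenses with it since $m=\dim\C_x+2$ already yields $m\geq 3$.
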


\begin{proof} From Lemma~\ref{fam}, it follows that $\C_x \subset \PP_{\ast}(T_xX)$ contains a line $\PP_{\ast}(T_xP_x)$. On the other hand, $m$ is equal to $\dim \C_x +2$. So our assertion holds. 
\end{proof}

\begin{lem}\label{contractions} The following holds.
\begin{enumerate}
\item $[\K]$ spans an extremal ray of $NE(X)$.
\item For the contraction $\varphi _\K: X \rightarrow Y$ of the extremal ray $\RR_{\geq 0}[\K]$, a general fiber $F$ of $\varphi _\K$ is a Fano manifold of index $r_F \geq m \geq 3$. 
\end{enumerate}
\end{lem}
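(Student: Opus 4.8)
The plan is to establish (i) and (ii) in Lemma~\ref{contractions} by reducing to the machinery already assembled in Section~2, exactly in the spirit of the proof of Theorem~\ref{hyp}. For part (i), the goal is to apply Theorem~\ref{NO2}: since $X$ has dimension $5$ and $\K$ is a covering family of lines on $X$ by Lemma~\ref{fam}, Theorem~\ref{NO2} immediately gives that $[\K]$ spans an extremal ray of $NE(X)$. (One could alternatively invoke Theorem~\ref{NO} once $m = -K_X.\K \geq 3 = \frac{n+1}{2}$ is known from Lemma~\ref{m}, but Theorem~\ref{NO2} is cleaner and covers the borderline case $n=5$ without any numerical hypothesis.)

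For part (ii), once we have the extremal contraction $\varphi_{\K}: X \to Y$ associated to $\RR_{\geq 0}[\K]$, I would first argue it is of fibering type: the lines of $\K$ cover $X$ and are contracted by $\varphi_{\K}$, so $\dim Y < \dim X$ and the general fiber $F$ is positive-dimensional. Writing $f := \dim F$, restriction of the canonical bundle gives $K_F = K_X|_F$, and since $\RR_{\geq 0}[\K] = \varphi_{\K}^*(\text{a curve class})$ pulls back to zero, $L = K_X + mH$ restricts trivially to $F$, so $K_F = -mH|_F$ with $H|_F$ ample. Hence $-K_F$ is ample, i.e.\ $F$ is a Fano manifold, and every rational curve $C$ on $F$ satisfies $-K_F.C = m(H.C) \geq m$, so the pseudo-index $i_F \geq m$; a fortiori the index $r_F \geq i_F \geq m$. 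Finally $m \geq 3$ is Lemma~\ref{m}. This yields the stated conclusion $r_F \geq m \geq 3$.

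The only genuinely delicate point is verifying that $L = K_X + mH$ is indeed a pullback of a ($\QQ$-)Cartier divisor from $Y$ under $\varphi_{\K}$, equivalently that $L$ is trivial on the fibers of $\varphi_{\K}$, rather than merely trivial on the one class $[\K]$. This follows because $\varphi_{\K}$ is the contraction of the extremal ray $R = \RR_{\geq 0}[\K]$, so a curve is contracted precisely when its class lies in $R$, and $L.[\K] = K_X.\K + mH.\K = -m + m = 0$ forces $L$ to be numerically trivial on all contracted curves; by the Contraction Theorem $L = \varphi_{\K}^*L'$ for some $\QQ$-Cartier divisor $L'$ on $Y$. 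This is exactly the argument used already in the proof of Theorem~\ref{hyp}, so it is routine here. Nothing in this lemma requires new ideas; it is purely an assembly of Theorem~\ref{NO2}, the Contraction Theorem, adjunction, and Lemma~\ref{m}, and I would present it in three or four lines.
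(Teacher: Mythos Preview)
Your overall strategy matches the paper's: part (i) via Novelli--Occhetta (the paper cites Theorem~\ref{NO} together with Lemma~\ref{m}, you cite Theorem~\ref{NO2}; either works), and part (ii) via adjunction using that $L=K_X+mH$ is a pullback from $Y$, giving $-K_F=mH|_F$.

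There is, however, a genuine slip in your part (ii). You write ``the pseudo-index $i_F \geq m$; a fortiori the index $r_F \geq i_F \geq m$.'' The inequality $r_F \geq i_F$ is the wrong way round: for any Fano manifold one has $i_F \geq r_F$, since $-K_F = r_F H'$ with $H'$ ample forces $-K_F.C = r_F(H'.C) \geq r_F$ for every rational curve $C$. So the detour through the pseudo-index does not yield $r_F \geq m$. The correct (and shorter) argument is the one the paper gives and which you in fact already wrote down just before the slip: from $-K_F = m\,(H|_F)$ with $H|_F$ ample, the very definition of the index gives $r_F \geq m$ immediately. Drop the pseudo-index sentence and your proof is fine.
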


\begin{proof} $\rm (i)$ By Lemma~\ref{m}, our setting satisfies the assumption of Theorem~\ref{NO}. Hence $[\K]$ spans an extremal ray.

$\rm (ii)$ Since $L$ is the pull-back of a divisor of $Y$, we have 
\begin{eqnarray}\label{} 
-K_F=-K_X|_F=mH|_F. \nonumber
\end{eqnarray} 
Hence our assertion holds.
\end{proof}

\begin{pro}\label{i3} We have $r_F \geq \dim F-1$. 
\end{pro}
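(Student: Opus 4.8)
The plan is to mimic the argument already used in the proof of Theorem~\ref{hyp}, now with the cubic hypersurface $S_x$ replaced by the plane $P_x$ of Lemma~\ref{fam}. Let $F$ be a general fiber of the contraction $\varphi_{\K}$, set $f:=\dim F$, and let $\mathscr{L}$ be a covering family of lines on $F$ containing an irreducible component $\K_F^0$ of $\K_F$ (the $\K$-lines lying in $F$); this family exists exactly as in the proof of Theorem~\ref{hyp}. For a general $x\in F$, the family $\mathscr{L}_x$ of $\mathscr{L}$-lines through $x$ is a smooth projective variety of dimension $r_F-2$ embedded by the tangent map into $\PP_{\ast}(T_xF)$, and since $F$ is a general fiber we may also arrange that $x$ is general on $X$ so that $\C_x\subset\mathscr{L}_x$.

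Next I would locate inside $\mathscr{L}_x$ the variety $F_1(x,P_x)$ of lines through $x$ contained in the plane $P_x$. By Lemma~\ref{fam} every line in $P_x$ is a $\K$-line, and every $\K$-line through a general point lies in $F$ (since $\K$-lines are $\varphi_{\K}$-contracted), so $P_x\subset F$ and $F_1(x,P_x)\subset\mathscr{L}_x\subset\PP_{\ast}(T_xF)$. Now $F_1(x,P_x)$ is a line $\PP^1$ in $\PP_{\ast}(T_xF)$ — indeed, via the tangent map it is $\PP_{\ast}(T_xP_x)$. Thus we have a linear $\PP^1 = \PP_{\ast}(T_xP_x)$ sitting inside $\mathscr{L}_x$, which is itself a nondegenerate smooth subvariety of $\PP_{\ast}(T_xF)\cong\PP^{f-1}$ (nondegeneracy and smoothness of $\mathscr{L}_x$ follow from Proposition~\ref{VMRT} once $\dim\mathscr{L}_x\geq\frac{f-1}{2}$, which holds because $r_F\geq m\geq 3$ forces $\dim\C_x\geq 1$ and one checks the dimension bound just as in the cubic case; if the bound were to fail one still gets the conclusion directly since then $r_F$ is already large).

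The point is then to apply Proposition~\ref{Zak} to the chain $\PP_{\ast}(T_xP_x)\subset\mathscr{L}_x\subset\PP_{\ast}(T_xF)$, exactly as Proposition~\ref{Zak} was applied to $Z\subset\mathscr{L}_x\subset\PP_{\ast}(T_xF)$ in the proof of Theorem~\ref{hyp}. Here the ``small'' subvariety is the line $\PP_{\ast}(T_xP_x)$, which spans a $\PP^1$, so it has codimension $0$ in its own linear span, giving $0\geq{\rm codim}_{\langle\mathscr{L}_x\rangle}\mathscr{L}_x = (f-1)-\dim\mathscr{L}_x = (f-1)-(r_F-2) = f+1-r_F$, i.e. $r_F\geq f+1$. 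Since by Theorem~\ref{KO} one always has $r_F\leq f+1$, this would actually force $r_F=f+1$ and $F\cong\PP^f$, which is much stronger than claimed and suspicious; so the correct bookkeeping must be that a line cannot serve as the ``$Y$'' in Proposition~\ref{Zak} (its codimension in its span is $0$, not $<{\rm codim}$), and instead one argues more carefully: a general point of $\mathscr{L}_x$ lies on lines of $\mathscr{L}_x$ through the two points cut out on $\PP_{\ast}(T_xP_x)$, or one uses that $\mathscr{L}_x$ contains a linear $\PP^1$ together with a Zak-type / Scorza-type bound on the dimension of the span of varieties containing a line. The honest statement to invoke is that a nondegenerate irreducible variety $\mathscr{L}_x\subset\PP^{f-1}$ of dimension $r_F-2$ that contains a line has its ambient dimension bounded by roughly $2(r_F-2)$, i.e. $f-1\leq 2(r_F-2)$ when ${\rm codim}$ is $2$, yielding $r_F\geq\frac{f+3}{2}\geq f-1$ in the relevant range, and then refining. \textbf{I expect this last dimension-count — extracting $r_F\geq f-1$ from the presence of a line in the nondegenerate VMRT $\mathscr{L}_x$ — to be the main obstacle}, and it is the step where one must be careful to use the right form of Proposition~\ref{Zak} (applied to an appropriate secant or cone construction over the line rather than to the line itself) so as not to over-conclude; everything else is a transcription of the proof of Theorem~\ref{hyp}.
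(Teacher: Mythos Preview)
Your approach has a genuine gap, and it lies exactly where you flagged it. The only case with any content is $F=X$, i.e.\ $f=5$ and $r_X=m$; when $F\neq X$ one has $f\le 4$ and Lemma~\ref{contractions}(ii) already gives $r_F\ge m\ge 3\ge f-1$, so there is nothing to do. In the case $F=X$ you must rule out $r_X=3$. But then $\dim\mathscr{L}_x=\dim\C_x=r_X-2=1$, while Proposition~\ref{VMRT} needs $\dim\C_x\ge\tfrac{f-1}{2}=2$ (and also ${\rm Pic}(F)\cong\ZZ$) before you can conclude that $\mathscr{L}_x$ is irreducible and nondegenerate. So precisely in the case you have to exclude, you have no way to verify the hypotheses that make the Zak argument run. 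Your parenthetical ``if the bound were to fail one still gets the conclusion directly since then $r_F$ is already large'' has the inequality backwards: failure of $\dim\mathscr{L}_x\ge\tfrac{f-1}{2}$ means $r_F$ is \emph{small}, not large. Your later attempts (secant/cone constructions, or the claim that a nondegenerate variety containing a line has ambient dimension $\le 2\dim$) are not correct as stated and do not salvage the argument.

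The paper's proof is entirely different in the hard case $F=X$. After observing that $X$ is then Fano with ${\rm Pic}(X)=\ZZ[H]$ and $r_X=m\ge 3$, one supposes $r_X=3$ and invokes Mukai's classification of index-$3$ Fano $5$-folds with very ample fundamental class: there are eight types (a quartic hypersurface, two complete-intersection types, a section of a cone over $G(2,\CC^5)$, and linear sections of $S_4$, $G(2,\CC^6)$, $LG(3,\CC^6)$, and the $G_2$-variety). For each type the VMRT $\C_x$ is computed explicitly --- as a complete intersection in the first three cases, via a direct cone argument in the fourth, and via Proposition~\ref{sechomog} in the remaining homogeneous cases --- and shown to be an irreducible curve of degree $>1$. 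Such a curve cannot contain the line $\PP_\ast(T_xP_x)$ guaranteed by Lemma~\ref{m}, a contradiction. There is no Zak-type step at all; the work is the case-by-case VMRT computation.
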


\begin{proof} If $F$ does not coincide with $X$, then our assertion follows from Lemma~\ref{contractions} $\rm (ii)$. So we assume that $F=X$. Since $X \subset \PP^N$ is uniruled by lines and $\rho_X=1$, it is easy to see that $X$ is a Fano manifold whose Picard group is generated by a hyperplane section $H$. By virtue of Lemma~\ref{contractions}, we see that the index $r_X$ is equal to $m \geq 3$. Hence it is enough to prove that $r_X>3$. 

Assume that $r_X=3$. Then $X \subset \PP^N$ is a Mukai $5$-fold whose Picard group is generated by a very ample line bundle. We also see that the dimension $p$ of the variety of minimal rational tangents at a general point is $1$. From \cite{Muk}, $X \subset \PP^N$ is projectively equivalent to one of the following: 
\begin{enumerate}
\item a hypersurface of degree $4$,
\item a complete intersection of a quadric hypersurface and a cubic hypersurface,
\item a complete intersection of three quadric hypersurfaces,
\item a hyperplane section of ${\sum} \subset \PP^{10}$, where ${\sum} \subset \PP^{10}$ is a smooth section of the cone $\tilde{G}:={\rm Cone}(\{o\},G(2,\CC^5)) \subset \PP^{10}$ over the Grassmann variety $G(2,\CC^5) \subset \PP_{\ast}(\bigwedge^2 \CC^5)$ by a quadric hypersurface,  
\item a linear section of $S_4$, where $S_4$ is the spinor variety which is an irreducible component of the Fano variety of $4$-planes in $Q^{8}$,
\item a linear section of the Grassmann variety $G(2,\CC^6) \subset \PP_{\ast}(\bigwedge^2 \CC^6)$,
\item a hyperplane section of $LG(3,\CC^6)$, where $LG(3,\CC^6)$ is the Lagrangian Grassmann variety which is the variety of isotropic $3$-planes for a non-degenerate skew-symmetric bilinear form on $\CC^6$, or
\item the $G_2$-variety which is the variety of isotropic $5$-planes for a non-degenerate skew-symmetric $4$-linear form on $\CC^7$.

\end{enumerate}
By Lemma~\ref{m}, it is sufficient to show that $\C_x$ is an irreducible variety which is not a line.   

\begin{flushleft}
\underline{Cases (i)-(iii)} 
\end{flushleft}

Let $X \subset \PP^N$ be a smooth complete intersection of hypersurfaces of degree $d_1, \cdots, d_m$, where $p:=N-1- \sum d_i$ is positive. Then $X$ is covered by lines. 
For a general point $x$ on $X$, the Hilbert scheme $F_1(x,X)$ can be embedded into $\PP_{\ast}(T_xX)$ via the tangent map. Then the defining equations of $F_1(x,X) \subset \PP_{\ast}(T_x X)$ are given by derivatives at $x$ of the defining equations of $X$ (c.f.\cite[1.4.2]{Hw2}). So $F_1(x,X)$ is
a smooth complete intersection of dimension $p>0$. Since a smooth complete intersection of positive dimension is connected, we see that $F_1(x,X)$ is irreducible. According to Proposition~\ref{fano}, this implies that there exists a unique covering family of lines and $F_1(x,X) \subset \PP_{\ast}(T_xX)$ is its variety of minimal rational tangents $\C_x$. 

In our cases, if $X$ is a hypersurface of degree $4$, then $\C_x$ is a complete intersection of degrees $(4,3,2)$. If $X$ is a complete intersection of degrees $(3,2)$, then $\C_x$ is a complete intersection of degrees $(3,2,2)$. If $X$ is a complete intersection of degrees $(2,2,2)$, then so is $\C_x$. In particular, $\C_x$ is an irreducible variety which is not a line.

\begin{flushleft}
\underline{Case (iv)} 
\end{flushleft}

Let $X$ be a smooth hyperplane section ${\sum} \cap M \subset \PP^{10}$, where $M$ is a hyperplane of $\PP^{10}$. 

\begin{cl}\label{a} For ${\sum} \subset \PP^{10}$, there exists a unique covering family of lines $\K_{\sum}$ on ${\sum}$ and its variety of minimal rational tangents $\C_{\sum,x}$ at a general point $x$ is an intersection of a cone over a Segre $3$-fold $\PP^1 \times \PP^2$ and a $7$-dimensional quadric hypersurface. In particular, $\C_{\sum,x}$ is irreducible variety of degree $>1$. 
\end{cl}

\begin{proof}[Proof of Claim~\ref{a}] Let $\tilde{G}:={\rm Cone}(\{o\},G(2,\CC^5)) \subset \PP^{10}$ be the cone over the Grassmann variety $G(2,\CC^5) \subset \PP_{\ast}(\bigwedge^2 \CC^5)$ with vertex $o \in \PP^{10}$ and $\pi$ the associated projection $\PP^{10} \setminus \{o\} \rightarrow \PP^9$. For a point $x \in \tilde{G} \setminus \{o\}$, we denote by $\vec{xo} \in \PP_{\ast}(T_x\PP^{10})$ the point corresponding to the line passing through $x$ and $o$. Then we have a natural projection $\PP_{\ast}(T_x\PP^{10}) \setminus \{\vec{xo}\} \rightarrow \PP_{\ast}(T_{\pi(x)}\PP^9)$. As we mentioned before, $F_1(x, \tilde{G})$ can be embedded into $\PP_{\ast}(T_x\PP^{10})$ via the tangent map. Then $F_1(x, \tilde{G}) \subset \PP_{\ast}(T_x\PP^{10})$ is the cone with vertex $\vec{xo}$ over $F_1(\pi(x),G(2,\CC^5))$. 

Remark that $F_1(\pi(x),G(2,\CC^5)) \subset \PP_{\ast}(T_{\pi(x)}\PP^9)$ is projectively equivalent to the Segre $3$-fold $\PP^1 \times \PP^2 \subset \PP^5$ (see \cite{E}, \cite{HM}, \cite{LM}). On the other hand, for $x \in Q^9$, $F_1(x, Q^9)$ is a $7$-dimensional quadric hypersurface $Q^7 \subset \PP_{\ast}(T_x\PP^{10})$. Hence $F_1(x, {\sum})$ is the intersection of the cone of the Segre $3$-fold $\PP^1 \times \PP^2$ and $Q^7$. Thus we see that $F_1(x, {\sum})$ is connected. On the other hand, it follows from the generality of $x$ on ${\sum}$ that $F_1(x, {\sum})$ is smooth. Consequently, $F_1(x, {\sum})$ is irreducible. By virtue of Proposition~\ref{fano}, this implies that there exists a unique covering family of lines on $\sum$ and $F(x, {\sum})$ is its variety of minimal rational tangents. 
\end{proof}

Let $\K_X$ be a covering family of lines on $X$. From the uniqueness of a covering family of lines on ${\sum}$, it follows that $\K_X$ is contained in $\K_{\sum}$. By virtue of Proposition~\ref{nf}, $X \setminus Z_{\K_{\sum}}$ is a non-empty open subset of $X$. Denote by $\C_{X,x}$ the variety of minimal rational tangents with respect to $\K_X$ at $x$ on $X$. Fix a general point $x$ on $X$ (i.e., it is on $X \setminus (Z_{\K_{\sum}} \cup  Z_{\K_X})$). Then we obtain $\C_{X,x}=\C_{\sum,x} \cap \PP_{\ast}(T_xM)$. By the above Claim~\ref{a} and the same argument as in its proof, we see that $\C_{X,x}$ is irreducible variety of degree $>1$.

\begin{flushleft}
\underline{Cases (v)-(viii)} 
\end{flushleft}

In these cases, $X$ is a linear section of a rational homogeneous variety $W$. From Proposition~\ref{sechomog}, it follows that $\C_x$ is an irreducible variety of degree $>1$.

\end{proof}

\subsection{Case where $F$ coincides with $X$} 

Assume that a general fiber $F$ coincides with $X$.  As we mentioned in Proposition~\ref{i3} and its proof, $X$ is a Fano manifold whose Picard group is generated by a hyperplane section $H$ and the index $r_X$ is equal to $m > 3$.
From Theorem~\ref{KO}, it follows that $r_X \leq n+1=6$ and if $r_X$ is $6$, respectively $5$, then $X$ is isomorphic to $\PP^5$, respectively $Q^5$. If $X$ is isomorphic to $\PP^5$ or $Q^5$, then $X$ is swept out by planes. So we consider the case where $r_X=4$. In this case, $X$ is a del Pezzo $5$-fold whose Picard group is generated by a very ample line bundle. From Theorem~\ref{del}, $X \subset \PP^N$ is projectively equivalent to a cubic hypersurface, a complete intersection of two quadrics or a hyperplane section of the Grassmann variety $G(2,\CC^5) \subset \PP_{\ast}(\bigwedge^2\CC^5)$.

\begin{pro} A $5$-dimensional cubic hypersurface $X \subset \PP^6$ is not swept out by planes.
\end{pro}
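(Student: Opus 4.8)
The plan is to suppose, for contradiction, that the cubic fivefold $X\subset\PP^{6}$ is swept out by planes, and to feed a plane through a general point into the variety of minimal rational tangents. By Lemma~\ref{fam}, applied with ``cubic hypersurfaces'' replaced by ``planes'', there is a covering family of lines $\K$ on $X$ together with a plane $P_{x}\subset X$ through a general point $x$ all of whose lines are members of $\K$; since $X$ is a cubic hypersurface and $\rho_{X}=1$, $\K=F_{1}(X)$ is the unique family of lines on $X$. The first step is to identify $\C_{x}$: exactly as in the analysis of complete intersections in the proof of Proposition~\ref{i3} (cases (i)--(iii)) and \cite[1.4.2]{Hw2}, the scheme $F_{1}(x,X)\subset\PP_{\ast}(T_{x}X)=\PP^{4}$ of lines through $x$ is the complete intersection of a quadric and a cubic, namely the second and third derivatives of the defining equation at $x$; by generality of $x$ it is smooth, and, being a positive--dimensional complete intersection, it is irreducible, so by Proposition~\ref{fano} it coincides with the variety of minimal rational tangents $\C_{x}$ with respect to $\K$. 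Note that $\C_{x}$ is a $K3$ surface of degree $6$, since $K_{\C_{x}}=(K_{\PP^{4}}+2H+3H)|_{\C_{x}}=0$; and the pencil of lines of $P_{x}$ through $x$ gives a line $\ell_{0}:=\PP_{\ast}(T_{x}P_{x})\subset\C_{x}$.

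The second step is the dichotomy forced by the fact that a $K3$ surface is not uniruled. Let $\mathcal{P}$ be an irreducible family of planes covering $X$; since the universal family over $\mathcal{P}$ dominates $X$, we have $\dim\mathcal{P}\ge 3$. If $\dim\mathcal{P}\ge 4$, then through a general point $x\in X$ the planes of $\mathcal{P}$ containing $x$ form a family of positive dimension, and distinct such planes give distinct lines $\PP_{\ast}(T_{x}P)\subset\C_{x}$; hence these lines form a positive--dimensional family contained in the irreducible surface $\C_{x}$, so their union is all of $\C_{x}$, and $\C_{x}$ is uniruled---impossible for a $K3$ surface. Therefore $\dim\mathcal{P}=3$, so only finitely many planes of $\mathcal{P}$ pass through a general point of $X$.

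The remaining, and main, step is to rule out $\dim\mathcal{P}=3$ by showing that the Hilbert scheme $F_{2}(X)$ of planes in a smooth cubic fivefold has dimension exactly $2$. For a plane $P\subset X$, smoothness of $X$ along $P$ yields $0\to N_{P/X}\to\O_{\PP^{2}}(1)^{\oplus 4}\to\O_{\PP^{2}}(3)\to 0$, whence $H^{2}(N_{P/X})=0$ and $\chi(N_{P/X})=12-10=2$, so that $2\le\dim_{[P]}F_{2}(X)\le h^{0}(N_{P/X})=2+h^{1}(N_{P/X})$; here $h^{1}(N_{P/X})$ is the cokernel of the multiplication map $H^{0}(\O_{\PP^{2}}(1))^{\oplus 4}\to H^{0}(\O_{\PP^{2}}(3))$ determined by the four conics $G_{3}|_{P},\dots,G_{6}|_{P}$ obtained by writing the cubic form as $F=\sum_{j=3}^{6}x_{j}G_{j}$ with $P=\{x_{3}=\dots=x_{6}=0\}$. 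When this map is surjective, $\dim_{[P]}F_{2}(X)=2$ and we are done; I expect the genuine obstacle to be the remaining cases, where one must show that the extra first--order deformations of $P$ in $X$ are obstructed by computing the quadratic obstruction in $H^{1}(N_{P/X})$ directly from $F$, so that $\dim_{[P]}F_{2}(X)=2$ in all cases---equivalently, that the Fano scheme of planes of a smooth cubic fivefold always has the expected dimension. Granting this, $\dim\mathcal{P}=2<3$, contradicting that $\mathcal{P}$ covers $X$. Alternatively, one may try to reach a contradiction already when $\dim\mathcal{P}=3$ by observing that the class supporting $\ell_{0}$ on the $K3$ surface $\C_{x}$ would vary flatly over an open subset of $X$, hence yield a monodromy--invariant integral $(1,1)$--class other than a multiple of the polarization, whose self--intersection $6$ is incompatible with that of a line.
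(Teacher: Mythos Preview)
Your argument has a genuine gap at the decisive step. The paper's proof is a one-line citation: by \cite[Proposition~3.4]{I} (Izadi), the Hilbert scheme of planes on a smooth cubic fivefold has dimension exactly $2$, so no family of planes can cover the $5$-fold $X$. This is precisely the fact you isolate as the ``remaining, and main, step'' but do not establish.

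Your K3 detour is correct and attractive: $\C_{x}$ is indeed a smooth $(2,3)$ complete intersection in $\PP_{\ast}(T_{x}X)\cong\PP^{4}$, hence a non-uniruled K3 surface, and this cleanly rules out $\dim\mathcal{P}\ge 4$. But it does not touch the case $\dim\mathcal{P}=3$: a $3$-dimensional covering family would place only finitely many lines on each $\C_{x}$, and degree-$6$ K3 surfaces in $\PP^{4}$ can certainly contain finitely many lines. So you are thrown back onto $\dim F_{2}(X)\le 2$. Your normal-bundle computation correctly yields the expected dimension $\chi(N_{P/X})=2$, but you explicitly leave open the planes with $h^{1}(N_{P/X})>0$; the proposed obstruction analysis is only a hope, and the alternative monodromy sketch is not a proof either---the class of a single $\ell_{0}$ need not be monodromy-invariant, only the sum over the finitely many planes through $x$ would be, and you give no self-intersection contradiction for that sum. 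In short, you have correctly located the heart of the matter but not supplied it; the paper simply imports it from Izadi.
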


\begin{proof} Since the dimension of the Hilbert scheme of planes in $X$ is $2$ by \cite[Proposition~3.4]{I}, $X$ is not swept out by planes.
\end{proof}

\begin{pro}[{\cite[Lemma~4.9]{R}}]\label{reid} A $(2n+1)$-dimensional smooth complete intersection $X \subset \PP^{2n+3}$ of two quadrics is swept out by $n$-dimensional linear subspaces. In particular, a $5$-dimensional smooth complete intersection $X \subset \PP^{7}$ of two quadrics is swept out by planes.
\end{pro}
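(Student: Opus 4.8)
The plan is to reduce the assertion, by an analysis of the lines through a general point, to a classical fact about even-dimensional complete intersections of two quadrics. Write $X=Q_1\cap Q_2\subset\PP^{2n+3}$; since $X$ is smooth and covered by lines, it suffices to exhibit through a general point $x\in X$ a linear $\PP^n$ contained in $X$. First I would fix such an $x$ and choose coordinates $[x_0:\cdots:x_{2n+3}]$ with $x=[1:0:\cdots:0]$, so that $Q_i=x_0\ell_i+q_i$ with $\ell_i$ linear and $q_i$ quadratic in $x_1,\dots,x_{2n+3}$; smoothness of $X$ at $x$ forces $\ell_1,\ell_2$ to be linearly independent. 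A line through $x$ with tangent direction $[v]$ lies on $X$ precisely when $\ell_1(v)=\ell_2(v)=q_1(v)=q_2(v)=0$, so the Hilbert scheme $F_1(x,X)$ of lines on $X$ through $x$ is identified with
\[
V:=\{\,q_1=q_2=0\,\}\ \subset\ \{\,\ell_1=\ell_2=0\,\}=\PP_{\ast}(T_xX)\cong\PP^{2n},
\]
a complete intersection of two quadrics of dimension $2(n-1)$ inside a $\PP^{2(n-1)+2}$.

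The key point is that a linear $\PP^n\subset X$ through $x$ is the same datum as a linear $\PP^{n-1}$ contained in $V$. Indeed, given such a $\PP^n$ the tangent directions at $x$ of the lines it contains through $x$ form a $\PP^{n-1}\subset V$; conversely the cone over any $\PP^{n-1}\subset V$ with vertex $x$ is a $\PP^n$ all of whose lines through $x$ lie on $X$, hence is contained in $X$, because a plane is covered by the lines through any one of its points. So I am reduced to showing that $V$ contains a linear subspace of half its dimension. For general $x$ the variety $V=F_1(x,X)$ is smooth, this being the generic smoothness of the variety of minimal rational tangents already used elsewhere in the paper, and, being a complete intersection of two quadrics of dimension $2(n-1)$, it is then an irreducible smooth such intersection when $n\ge2$ and a reduced set of four points when $n=1$. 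Now I would invoke the classical structure theory of pencils of quadrics (see \cite{R}): a smooth complete intersection $V^{2m}\subset\PP^{2m+2}$ of two quadrics has squarefree discriminant and contains linear subspaces of dimension $m$, in fact finitely many and exactly $2^{2m+2}$ of them. Taking $m=n-1$ produces a $\PP^{n-1}\subset V$, hence a $\PP^n\subset X$ through $x$; since $x$ was general this shows $X$ is swept out by $n$-planes, and the final assertion of the proposition is the case $n=2$, where one in fact finds sixteen planes through a general point of $X$.

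The step I expect to be the main obstacle is this classical input: the non-emptiness of the Fano scheme of $m$-planes on a smooth even-dimensional complete intersection of two quadrics, together with the verification that for general $x$ the variety $V$ is genuinely a smooth complete intersection of two quadrics of the expected dimension rather than something degenerate. Both are standard, the first belonging to the classical theory of nets and pencils of quadrics and the second following from generic smoothness in characteristic zero applied to the universal family of lines on $X$, but they carry the real content. An alternative, less explicit route would bypass the reduction above and cite instead that the Fano scheme $F_n(X)$ of $n$-planes on $X^{2n+1}\subset\PP^{2n+3}$ is a torsor under the Jacobian of the associated hyperelliptic curve of genus $n+1$, hence of dimension $n+1$; then the incidence variety $\{(P,y):y\in P\in F_n(X)\}$ has dimension $(n+1)+n=2n+1=\dim X$, and the point to settle becomes the dominance of its projection to $X$.
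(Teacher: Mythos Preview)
The paper does not supply its own proof of this proposition; it simply records the statement with a citation to Reid's thesis \cite[Lemma~4.9]{R}. Your argument is correct and provides a self-contained justification: you identify $F_1(x,X)$ with a complete intersection of two quadrics $V^{2(n-1)}\subset\PP^{2n}$, reduce the existence of a $\PP^n\subset X$ through $x$ to that of a $\PP^{n-1}\subset V$, and then invoke the classical fact (itself in \cite{R}) that a smooth even-dimensional complete intersection of two quadrics contains linear subspaces of half its dimension. The check that $V$ has the expected dimension for general $x$ follows from the deformation-theoretic count $\dim F_1(x,X)=-K_X\cdot\ell-2=2n-2$, and smoothness from generic smoothness in characteristic zero, as you say. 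The alternative route you sketch via the description of $F_n(X)$ as a torsor under the intermediate Jacobian is closer in spirit to the deeper structural part of \cite{R}; your inductive reduction is more elementary, though it still ultimately rests on \cite{R} for the even-dimensional base case.
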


\begin{pro} A hyperplane section of the Grassmann variety $G(2,\CC^5) \subset \PP_{\ast}(\bigwedge^2\CC^5)$ is swept out by planes. 
\end{pro}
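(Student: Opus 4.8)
The plan is to realize a plane through a general point of the hyperplane section as one of the linear $\PP^2$'s lying on $G(2,\CC^5)$, namely a sub-Grassmannian $G(2,V_3)$ for a suitable $3$-dimensional subspace $V_3 \subset \CC^5$. A hyperplane of $\PP_{\ast}(\bigwedge^2\CC^5)$ corresponds to a nonzero skew-symmetric bilinear form $\omega$ on $\CC^5$, and the associated hyperplane section is the variety
\begin{eqnarray}
X_\omega = \{[W] \in G(2,\CC^5) ~|~ \omega|_W = 0\} \nonumber
\end{eqnarray}
of $2$-dimensional $\omega$-isotropic subspaces of $\CC^5$. Since the section appearing in Theorem~\ref{plane} (iv) is smooth, I would first note that $\omega$ necessarily has rank $4$, so that $K := \ker \omega$ is a $1$-dimensional subspace and $\omega$ descends to a symplectic form on $\CC^5/K \cong \CC^4$.

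Next I would recall the description of the planes on $G(2,\CC^5)$: for any $3$-dimensional subspace $V_3 \subset \CC^5$ the sub-Grassmannian $G(2,V_3) = \{[W] ~|~ W \subset V_3\}$ is a $\PP^2$ embedded linearly in $\PP_{\ast}(\bigwedge^2\CC^5)$ via Plücker, and it is contained in $X_\omega$ precisely when $V_3$ is totally isotropic for $\omega$. Because $K$ is the radical of $\omega$, a totally isotropic $V_3$ of dimension $3$ must contain $K$ and corresponds to a Lagrangian plane of the symplectic space $\CC^5/K$; such subspaces form a $3$-dimensional family. To conclude, let $[W] \in X_\omega$ be a general point, so that $K \not\subset W$ and hence $K \cap W = 0$. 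Then $V_3 := W + K$ is $3$-dimensional, and $\omega|_{V_3} = 0$ since $\omega|_W = 0$ and $\omega(K, -) = 0$; thus $V_3$ is totally isotropic. Consequently $G(2,V_3) \cong \PP^2$ is a plane contained in $X_\omega$ and passing through $[W]$, since $W \subset V_3$. Hence $X_\omega$ is swept out by planes.

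The argument is short, and I do not expect a genuine obstacle once the right family of planes has been isolated; the steps needing (routine) verification are the rank-$4$ normal form of $\omega$ in the smooth case, the fact that the Plücker embedding of $G(2,V_3)$ is linear, and the genericity claim $K \cap W = 0$ (the locus where it fails is the single $\PP^3 = \{[W] ~|~ K \subset W\}$ inside the $5$-dimensional $X_\omega$). If one wished to drop the smoothness hypothesis and allow $\omega$ of rank $2$, essentially the same reasoning applies: every $3$-dimensional totally isotropic subspace meets $\ker\omega$, and through a general point one again finds a totally isotropic $V_3$, hence a plane $G(2,V_3)$.
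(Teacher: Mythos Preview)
Your argument is correct. The paper, however, takes a much shorter route via the \emph{other} family of linear spaces on $G(2,\CC^5)$: for each $1$-dimensional $\ell \subset \CC^5$ the Schubert variety $\{[W]\in G(2,\CC^5)\mid \ell\subset W\}\cong G(1,\CC^4)\cong\PP^3$ is linearly embedded, and these $\PP^3$'s cover $G(2,\CC^5)$; intersecting any one of them with the hyperplane $H$ gives a linear subspace of dimension $\geq 2$, so through every point of $X=G(2,\CC^5)\cap H$ there is a plane. No identification of $H$ with a skew form $\omega$, no rank discussion, and no smoothness hypothesis are needed.

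Your approach instead uses the dual family $G(2,V_3)\cong\PP^2$ and the description of $X_\omega$ as an isotropic Grassmannian. This is heavier machinery for the present statement, but it yields more: you explicitly parametrize the planes of this type lying on $X_\omega$ (they correspond to $3$-dimensional totally isotropic subspaces, i.e., Lagrangians in $\CC^5/K$), and your construction $V_3=W+K$ is canonical. The paper's trick of slicing a covering family of $\PP^3$'s is the minimal argument for ``swept out by planes,'' while yours is closer in spirit to the later Proposition~\ref{sch}, where one really wants specific subvarieties (Segre $3$-folds) rather than arbitrary hyperplane sections of linear spaces.
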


\begin{proof} The Grassmann variety $G(2,\CC^5) \subset \PP_{\ast}(\bigwedge^2\CC^5)$ is covered by linear subspaces of dimension $3$ which are contained as subgrassmann varieties $G(1,\CC^4)$. Hence our assertion holds.
\end{proof}

\subsection{Case where $F$ does not coincides with $X$}\label{planetwo}

%\begin{lem}\label{fiber} Under the assumption of Lemma~\ref{contractions}, if $L$ is nef, $(F, {\mathscr{O}}_X(1)|_F)$ is isomorphic to one of the following:\begin{enumerate}
%\item $(\PP^d, {\mathscr{O}}_{\PP^d}(1))$, where $2 \leq d \leq 4$,
%\item $(Q^4, {\mathscr{O}}_{Q^4}(1))$,
%\item $\PP^2 \times \PP^2$.
%\end{enumerate}
%\end{lem}

%\begin{proof}  
%\end{proof}

\begin{pro}\label{P2} If a general fiber $F$ does not coincide with $X$, then $X$ is one of the following:
\begin{enumerate}
\item a linear projective bundle,
\item $X$ admits a contraction $\varphi  :X \rightarrow C$ of an extremal  ray to a smooth curve and, for the locally free sheaf $\E:=\varphi _{\ast}{\mathscr{O}}_X(1)$, $X$ embeds over $C$ into $\PP(\E)$ as a divisor of relative degree $2$, or 
\item $X$ admits a contraction $\varphi  :X \rightarrow C$ of an extremal  ray to a smooth curve whose general fiber is isomorphic to $\PP^2 \times \PP^2$.
\end{enumerate}
\end{pro}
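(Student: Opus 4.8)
The plan is first to determine the isomorphism type of a general fibre $F$ of $\varphi_\K:X\to Y$, and then to read off the global structure of $X$. Since $L=K_X+mH$ is the pull-back of a divisor on $Y$, we have $-K_F=-K_X|_F=mH|_F$, so $F$ is a smooth Fano manifold whose index satisfies $r_F\geq m\geq 3$ (Lemma~\ref{m}, Lemma~\ref{contractions}), and in fact $m=r_F$ since a $\K$-line $\ell\subset F$ has $H|_F\cdot\ell=1$. Moreover the plane $P_x$ of Lemma~\ref{fam} through a general point $x$ is contracted by $\varphi_\K$, hence lies in the fibre through $x$; letting $x$ vary shows that a general fibre $F$ is swept out by planes, so $\PP^2\subseteq F$ and $2\leq f:=\dim F\leq 4$ (the upper bound because $F\neq X$).

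Next I would classify $F$. By Theorem~\ref{KO} and Theorem~\ref{Wi}, together with $-K_F=r_FH|_F$ and $f\leq 4$, the only possibilities are $F\cong\PP^f$ with $f\in\{2,3,4\}$, $F\cong Q^4$, $F\cong\PP^2\times\PP^2$ (the case $r_F=\frac{f}{2}+1$ with $\rho_F\geq 2$), or $F$ a del Pezzo $4$-fold of Picard number $1$; here $Q^3$ is discarded because a smooth $3$-dimensional quadric contains no plane, whereas $Q^4$ does. The del Pezzo $4$-folds of Theorem~\ref{del} are then excluded using that $F$ is swept out by planes: for general $x\in X$ one has $\PP_{\ast}(T_xP_x)\subseteq\C_x\subseteq F_1(x,F)$, but for $F$ a cubic $4$-fold or a linear section of $G(2,\CC^5)$ the variety $F_1(x,F)$ is an irreducible nondegenerate curve of degree $>1$ (computed as a smooth complete intersection in $\PP_{\ast}(T_xF)$, respectively via Proposition~\ref{sechomog}), which cannot contain the line $\PP_{\ast}(T_xP_x)$; and a smooth complete intersection of two quadrics in $\PP^6$ carries only finitely many planes (\cite{R}), hence is not swept out by planes.

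With $F$ determined I would derive the structure of $X$. If $F\cong\PP^f$ with $f\in\{3,4\}$, then $H|_F=\O_{\PP^f}(1)$, the general fibre is a linearly embedded $\PP^f$ with $N_{F/X}\cong\O_F^{\,5-f}$ (being the fibre over a smooth point of $Y$), and $5\leq 2f-1$, so Theorem~\ref{ein} yields $X\cong\PP_Y(\E)$; this is case (i). If $F\cong Q^4$, then $\dim Y=1$, so $Y=:C$ is a smooth curve, $H|_F=\O_{Q^4}(1)$, and the general fibre is a smooth quadric $4$-fold embedded in a $\PP^5$; hence $\E:=(\varphi_\K)_{\ast}\O_X(1)$ is locally free of rank $6$, $\O_X(1)$ is relatively very ample, and $X$ embeds over $C$ into $\PP(\E)$ as a divisor of relative degree $2$ (see \cite{BS}); this is case (ii). If $F\cong\PP^2\times\PP^2$, then again $\dim Y=1$, $Y=:C$ is a smooth curve and a general fibre is isomorphic to $\PP^2\times\PP^2$; this is case (iii).

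The remaining possibility $F\cong\PP^2$ is the main obstacle: then $\dim Y=3$ and $H|_F=\O_{\PP^2}(1)$, but Ein's numerical hypothesis $n\leq 2f-1$ fails for $n=5$, $f=2$, so Theorem~\ref{ein} is unavailable. One must instead prove that $\varphi_\K$ is equidimensional and then apply Fujita's Proposition~\ref{fujita} to obtain the linear $\PP^2$-bundle in case (i); a fibre $F_0$ of dimension $>2$ is itself covered by planes (limits of the $P_x$, by properness of the relevant Hilbert scheme), and ruling such a fibre out uses the adjunction-theoretic structure theory of scrolls (\cite{BS}). A secondary subtlety is the exclusion of the complete intersection of two quadrics, for which $F_1(x,F)$ is a surface and so can a priori contain the line $\PP_{\ast}(T_xP_x)$, forcing one to use instead the finiteness of its family of planes.
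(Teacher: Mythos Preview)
Your overall strategy coincides with the paper's: classify the general fibre $F$ via Theorems~\ref{Wi} and~\ref{KO}, exclude $Q^3$ and the del Pezzo $4$-folds using the VMRT, treat $F\cong\PP^3,\PP^4$ by Ein's theorem, $F\cong Q^4$ by the relative embedding into $\PP(\E)$, and $F\cong\PP^2\times\PP^2$ directly. One small correction: for a $4$-dimensional smooth complete intersection of two quadrics the index is $3$, so $F_1(x,F)$ has dimension $r_F-2=1$, not $2$; it is an irreducible degree-$4$ curve in $\PP_{\ast}(T_xF)\cong\PP^3$. Hence the VMRT argument (an irreducible nondegenerate curve of degree $>1$ cannot contain the line $\PP_{\ast}(T_xP_x)$) applies uniformly to all three del Pezzo types, exactly as the paper does by invoking ``the same argument as in Proposition~\ref{i3}''. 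Your detour through the finiteness of planes is unnecessary, and would in any case require further justification since you need the statement for \emph{every} such $F$, not merely a general one.

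The genuine gap is the case $F\cong\PP^2$. You correctly flag that Theorem~\ref{ein} fails and that one must prove equidimensionality before invoking Proposition~\ref{fujita}, but what you write is not a proof. The claim that a jumping fibre $F_0$ is covered by planes ``by limits of the $P_x$'' is neither used nor obviously true, and the vague appeal to adjunction theory in \cite{BS} does not substitute for an argument. The paper's proof is concrete. First, $\dim F_0=4$ is ruled out by slicing: cut $X$ and a $4$-dimensional irreducible component $E\subset F_0$ by three general members of $|H|$ to obtain a smooth surface $S$ and a curve $C\subset S$; then $\varphi_\K|_S$ is birational onto its image and contracts $C$, so $E\cdot C=C^2<0$, contradicting the numerical proportionality of $C$ with a curve $D$ in another fibre (where $E\cdot D=0$). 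Second, for $\dim F_0=3$, slice by two general members of $|H|$ to get a smooth $3$-fold $X'$ and a curve $F_0'$; the induced map $\phi=\varphi_\K|_{X'}$ is birational with $-K_{X'}$ $\phi$-ample and fibres of dimension $\leq 1$ near $y=\varphi_\K(F_0)$. By Cutkosky's classification \cite{Cut}, $\phi$ is near $y$ a composition of blow-ups along smooth curves; a short intersection computation on strict transforms forces the composition to have length one, so $F_0'\cong\PP^1$ with $N_{F_0'/X'}\cong\O_{\PP^1}\oplus\O_{\PP^1}(-1)$, and the contradiction then follows from \cite[Proposition~3.2.3]{BSW}. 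The essential idea you are missing is this reduction, via hyperplane slicing, to the structure theory of extremal contractions from smooth $3$-folds.
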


\begin{proof} Since $\varphi _\K$ contracts planes and $F$ does not coincide with $X$, we see that $2 \leq \dim F \leq 4$. From Lemma~\ref{contractions}, the index $r_F$ of $F$ satisfies $r_F \geq m \geq 3$. By Theorem~\ref{Wi}, it holds that $\rho_F=1$ except the case $F$ is isomorphic to $\PP^2 \times \PP^2$. So we assume that $\rho_F=1$. Then $F$ is a linear subspace $\PP^d$, a quadric hypersurface $Q^d$ or a del Pezzo $4$-fold such that ${\rm Pic}(X) \cong \ZZ$ which is generated by a very ample line bundle. Since any smooth quadric of dimension $3$ does not contain a plane, $F$ is not isomorphic to $Q^3$. Furthermore, we see that $F$ is not a del Pezzo $4$-fold. In fact, if this is the case, then $F$ is isomorphic to one of manifolds as in Theorem~\ref{del}. However because $F$ is also swept out by planes, the variety of minimal rational tangents of $F$ at a general point contains a line. So by the same argument as in Proposition~\ref{i3}, we get a contradiction. As a consequence, $F$ is isomorphic to $(\PP^d, {\mathscr{O}}_{\PP^d}(1))$, $(Q^4, {\mathscr{O}}_{Q^4}(1))$ or $\PP^2 \times \PP^2$.

First, assume that $F$ is isomorphic to $\PP^d$, where $2 \leq d \leq 4$. If $F$ is isomorphic to $\PP^3$ or $\PP^4$, then it follows from Theorem~\ref{ein} that $\varphi _\K:X \rightarrow Y$ is a linear projective bundle. So we assume that $F$ is isomorphic to $\PP^2$. Then we show that $\varphi _\K:X \rightarrow Y$ is a linear $\PP^2$-bundle.

Since $L$ is the pull-back of a divisor on $Y$ by $\varphi_\K$ and a general fiber of $\varphi_\K$ is isomorphic to $\PP^2$, we see that $m=3$. By virtue of Proposition~\ref{fujita}, it is enough to show that every fiber of $\varphi_\K$ has dimension $2$. We assume the contrary and derive a contradiction. So assume that there exists a fiber $F_0$ of $\varphi_\K$ whose dimension is at least $3$. If $\dim F_0=4$, then we can take an irreducible component $E \subset F_0$ of dimension $4$. Slice $X$ and $E$ with $3$ general elements of $|H|$ and denote these by $S$ and $C$, respectively. We may assume that $S$ is a smooth surface and $C$ is a curve. The restriction of $\varphi_\K$ to $S$ is birational onto its image and contracts $C$. These imply that $C^2<0$. Thus we have $E.C=E|_S.C=C^2<0$. On the other hand, let $D$ be a curve contained in a fiber of $\varphi_\K$ which is not $F_0$. Then we have $E.D=0$. However, since $\varphi_\K$ is an extremal contraction, $C$ and $D$ are numerically proportional. This is a contradiction. Hence $\dim F_0=3$.

Slice $X$ and $F_0$ with $2$ general elements of $|H|$ and denote these by $X'$ and $F_0'$, respectively. Then we may assume that $X'$ is a smooth $3$-fold and $F_0'$ is a curve. Set $y:=\varphi_\K(F_0)$ and denote  by $\phi$ the restriction of $\varphi_\K$ to $X'$. Remark that $\phi$ is a birational morphism and its fiber has dimension at most $1$ near $y$. Since we have $-K_{X'}=(-K_X-2H)|_{X'}$ and $L$ is the pull-back of a divisor on $Y$ by $\varphi_\K$, we see that $-K_{X'}$ is $\phi$-ample. From the classification of extremal contractions from a smooth $3$-fold (see \cite{Cut} and remark that the argument as in \cite{Cut} holds even if the target space of an extremal contraction is affine), $\phi$ is a composition of divisorial contractions which contract a divisor to a smooth curve near $y$. 
We denote the composition by
\begin{eqnarray}
X_0':=X' \to X_1' \to X_2' \to \cdots \to X_s':=Y. \nonumber
\end{eqnarray}
Assume that $s \geq 2$. Denote by $l$ the fiber of $X_{s-1}' \to X_s':=Y$ over $y$ and by $\tilde{l} \subset X_{s-2}'$ its strict transform. For the blow-up $X_{s-2}' \to X_{s-1}'$, denote by $C \subset X_{s-1}'$ the blow-up center and by $E \subset X_{s-2}'$ the exceptional divisor. Since $l \cap C$ is not empty, we have $-K_{X_{s-2}'}.\tilde{l}=1-E.\tilde{l} \leq 0$. This is a contradiction. Hence we have $s=1$. So we see that $\phi$ is the blow-up along a smooth curve near $y$. This implies that $F_0'$ is isomorphic to $\PP^1$ and $N_{F_0'/X'} \cong {\mathscr{O}}_{\PP^1} \oplus {\mathscr{O}}_{\PP^1}(-1)$. Here, by the same argument as in \cite[Proposition~3.2.3]{BSW}, we get a contradiction. As a consequence, if a general fiber of $\varphi _\K$ is isomorphic to $\PP^2$, then $\varphi _\K:X \rightarrow Y$ is a linear $\PP^2$-bundle.

%Let $T$ be a general hyperplane section of $Y$ through $y$ and $S$ its pull-back by $\phi$. Then we may assume that $S$ is smooth near the fiber over $y$. Furthermore, since $\phi|_S: S \rightarrow T$ is a birational morphism and $-K_S$ is $\phi|_S$-ample, $\phi|_S$ contracts a unique $(-1)$-curve. Hence there exists a unique $3$-dimensional component $D$ of $F_0$ and $C:=D \cap X'$ is just the fiber of $\phi$ over $y$. 

Second, assume that $F$ is isomorphic to $Q^4$. Then $C:=Y$ is a smooth curve and $\varphi _\K$ is flat. By the Grauert's theorem \cite[III, Corollary~12.9]{H}, $\E:={\varphi _\K}_{\ast}{\mathscr{O}}_X(1)$ is locally free of rank $6$. Since ${\mathscr{O}}_X(1)$ is very ample, ${\varphi _\K}^{\ast}{\varphi _\K}_{\ast}{\mathscr{O}}_X(1) \rightarrow {\mathscr{O}}_X(1)$ is surjective. This yields the embedding of $X$ into $\PP(\E)$ over $C$. Finally, if $F$ is isomorphic to $\PP^2 \times \PP^2$, then there is nothing to prove. 
\end{proof}

%\begin{pro} Under the assumption of Lemma~\ref{contractions}, if $L$ is not nef, $X$ is isomorphic to $\PP^2 \times \PP^3$. In particular, in this case, $X$ is a linear $\PP^2$-bundle.
%\end{pro}

%\begin{proof} By Theorem~\ref{NO}, there exists an extremal ray $R \subset NE(X)$ satisfying $L.R<0$ and $NE(X)=<[\K],R>$. From Proposition~\ref{NO3}, $\varphi _R :X \rightarrow Z$ is a linear $\PP^3$-bundle. On the other hand, every fiber of $\varphi _{\K} :X \rightarrow Y$ has dimension $2$, since two rays $\RR_{\geq 0}[\K]$, $R$ are different. In particular, a general fiber of $\varphi _\K$ is a plane $\PP^2$. So Proposition~\ref{fujita} implies that $\varphi _{\K} :X \rightarrow Y$ is a linear $\PP^2$-bundle. 
%Then by a theorem of Lazarsfeld \cite[Theorem~4.1]{Laz}, we see that $Z$ (respectively $Y$) is isomorphic to $\PP^2$ (respectively $\PP^3$). So $\varphi _\K \times \varphi _R : X \rightarrow Y \times Z$ is bijective. Hence it is an isomorphism by Zariski's Main Theorem. \end{proof}

\begin{rem}\label{takagi} \rm Professor Hiromichi Takagi taught the author the above proof such that $\varphi _\K:X \rightarrow Y$ is a linear $\PP^2$-bundle if  
 $F$ is isomorphic to $\PP^2$. 
\end{rem}

We have thus completed the proof of Theorem~\ref{plane}.

\begin{rem}\label{NOQ} \rm A $(2n+1)$-dimensional smooth complete intersection of two quadrics $X$ gives a negative answer to the following question raised by Novelli-Occhetta \cite[Question~6.2]{NO}: 

{\it Let $Y \subset \PP^N$ be a smooth variety of dimension $2n+1$ such that through every point of $Y$ there is a linear subspace of dimension $n$. ... Is it true that there exists an $n$-dimensional linear subspace $\Lambda$ with nef normal bundle?}

Actually, $X$ is swept out by $n$-dimensional linear subspaces (see Proposition~\ref{reid}). However there does not exist an $n$-dimensional subspace with nef normal bundle in $X$ except for the case where $n=1$ (See \cite[Theorem~1.1, Theorem~7.1]{NO}).

\end{rem}

\section{Projective manifolds swept out by copies of the Segre $3$-fold $\PP^1 \times \PP^2 \subset \PP^5$}

In this section, we show Theorem~\ref{Segre3}. 

Let $X \subset \PP^N$ be as in Theorem~\ref{Segre3}. Then $X \subset \PP^N$ is also swept out by planes. If $n=3$, there is nothing to prove. Suppose that $n=4$. By virtue of \cite[Main Theorem]{S}, $X$ is a linear projective bundle or a quadric hypersurface $Q^4$. However a $4$-dimensional smooth quadric hypersurface does not contain a copy of the Segre $3$-fold $\PP^1 \times \PP^2$. Hence Theorem~\ref{Segre3} holds if $n < 5$.  Therefore we assume that $n=5$ from now on. 

\subsection{Families of lines}

The Segre $3$-fold $S=\PP^1 \times \PP^2$ contains two families of lines. We say that a line on $S$ is {\it of the first type} (resp. {\it{of the second type}}) if it is contracted to a point by the second projection $S=\PP^1 \times \PP^2 \rightarrow \PP^2$ (resp. the first projection $S=\PP^1 \times \PP^2 \rightarrow \PP^1$). 

Let $P_1(t)$, $P_2(t)$, $P_3(t)$ be the Hilbert polynomials of a line, a plane, the Segre $3$-fold $\PP^1 \times \PP^2$, respectively. Moreover, set $\PP_{123}(t):=(P_1(t), P_2(t), P_3(t))$ and $\PP_{13}(t):=(P_1(t), P_3(t))$. Then we have the following commutative diagram: 

\[\xymatrix{
& {\rm FH}_{\PP_{123}(t)}(X) \ar[dl]_{\overline{p_1}} \ar[d] \ar[dr]^{\overline{p_3}} & \\
{\rm F}_1(X) & {\rm FH}_{\PP_{13}(t)}(X) \ar[l]_{p_1} \ar[r]^{p_3} & {\rm Hilb}_{P_3(t)}(X) \\
} \]

In this subsection, we prove the following:  

\begin{pro}\label{summing} Let $X \subset \PP^N$ be a smooth projective $5$-fold such that there is a copy of the Segre $3$-fold $\PP^1 \times \PP^2$ through a general point of $X$ in $X$. Then $X$ admits two covering families of lines $\K_1$ and $\V_1$ which define extremal contractions $\varphi_{\K_1}$ and $\varphi_{\V_1}$ respectively. Furthermore, through a general point of $X$ there is a copy  $S_x \subset X$ of the Segre $3$-fold $\PP^1 \times \PP^2$ such that $\varphi_{\K_1}$ (resp. $\varphi_{\V_1}$) contracts lines in $S_x$ of the second type (resp. of the first type).
\end{pro}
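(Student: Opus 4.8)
The plan is to produce the two covering families of lines $\K_1$ and $\V_1$ by a Hilbert-scheme argument entirely analogous to Lemma~\ref{famcubic} and Lemma~\ref{fam}, separating the two rulings of $S=\PP^1\times\PP^2$, and then to verify that each family spans an extremal ray using Theorem~\ref{NO2}. First I would work inside the commutative diagram relating ${\rm FH}_{\PP_{123}(t)}(X)$, ${\rm FH}_{\PP_{13}(t)}(X)$ and ${\rm Hilb}_{P_3(t)}(X)$. Let $\H_3^0$ be an irreducible component of the open locus in ${\rm Hilb}_{P_3(t)}(X)$ parametrizing smooth subvarieties with Hilbert polynomial $P_3(t)$ — by \cite{X} these are exactly copies of $S$ — with $\overline{{\rm Locus}(\H_3^0)}=X$; this exists by hypothesis. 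Pulling back to ${\rm FH}_{\PP_{13}(t)}(X)$ and pushing forward by $p_1$, the locus $p_1(p_3^{-1}(\H_3^0))$ parametrizes lines lying on the Segre threefolds in $\H_3^0$. The key point is that for a fixed smooth $S_x$, the Hilbert scheme of lines $F_1(S_x)$ has exactly two irreducible components, one for each ruling (since $F_1(\PP^1\times\PP^2)=F_1(\PP^1)\times(\PP^2)^{\vee} \sqcup (\PP^1)\times F_1(\PP^2)$, both irreducible). So $p_1(p_3^{-1}(\H_3^0))$ has (at least) two components dominating $X$; picking irreducible components $\K_1^0$ and $\V_1^0$ with $\overline{{\rm Locus}}=X$, one containing lines of the second type and the other lines of the first type through a general $S_x$, I let $\K_1$ and $\V_1$ be the irreducible components of $F_1(X)$ containing them.

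Next I would carry out the genericity step exactly as in Lemma~\ref{famcubic}: through a general point $x\in X$ pick a copy $S_x$ with $[S_x]\in\H_3^0$ such that the (unique, up to ruling) lines of each type in $S_x$ through $x$ avoid every irreducible component of $p_1(p_3^{-1}(\H_3^0))$ except the chosen ones. Since the Hilbert scheme of lines of a fixed ruling type on $S_x\cong\PP^1\times\PP^2$ is irreducible, $p_1(p_3^{-1}([S_x]))$ splits into two irreducible pieces each of which, containing the appropriate line through $x$, must lie in $\K_1^0$ resp.\ $\V_1^0$. This gives the final clause of the Proposition: $\varphi_{\K_1}$ contracts lines of $S_x$ of the second type, $\varphi_{\V_1}$ those of the first type. (One should be slightly careful with labelling which ruling is which, but this is a matter of convention; the content is that the two rulings give rise to two genuinely different covering families.)

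Then, since $\dim X=5$, Theorem~\ref{NO2} applies verbatim to each covering family: $[\K_1]$ and $[\V_1]$ each span an extremal ray of $NE(X)$, and hence each determines a contraction $\varphi_{\K_1}:X\to Y_1$ and $\varphi_{\V_1}:X\to Y_2$ of that extremal ray. This completes the proof.

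The main obstacle I anticipate is not any single deep input but the bookkeeping needed to guarantee that $\K_1$ and $\V_1$ are genuinely distinct covering families — i.e.\ that a general $S_x$ really does contribute lines of both rulings as members of two different components of $F_1(X)$, rather than all lines of $S_x$ degenerating into a single family of $X$. This is handled by the irreducibility of each ruling's Hilbert scheme together with the fact that lines of the first and second type are not numerically proportional on $S_x$ (the first type has self-intersection-theoretic behaviour distinguishing it from the second); once one knows $\K_1\ne\V_1$ as classes in $N_1(X)$, everything else is the routine Hilbert-scheme and deformation-theory machinery already used in Section~3 and the first part of Section~4. A secondary subtlety is merely verifying that the dominance (covering) property passes from $\H_3^0$ down through $p_3^{-1}$ and $p_1$ to the two chosen components, which follows because a general point of $X$ lies on some $S_x$ and through a general point of $S_x$ there pass lines of both rulings.
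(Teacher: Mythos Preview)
Your overall strategy matches the paper's, but there is a genuine gap and a misconception worth flagging.

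The misconception: you worry that the main obstacle is showing $\K_1\neq\V_1$ as numerical classes. The proposition does not assert this; $\K_1=\V_1$ is perfectly allowed (and occurs, e.g., when $\rho_X=1$). Relatedly, your claim that $p_1(p_3^{-1}(\H_3^0))$ has ``at least two components dominating $X$'' is false in general (take $X=\PP^5$). The actual subtlety you need to address is the \emph{coupling}: producing a \emph{single} Segre $S_x$ through a general point whose second-type lines lie in $\K_1$ \emph{and} whose first-type lines lie in $\V_1$. Your genericity step, modelled on Lemma~\ref{famcubic}, chooses $S_x$ to contain a generic $\K_1^0$-line; there is no reason the first-type lines on \emph{that} $S_x$ land in the independently chosen $\V_1^0$. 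As written, you would get two a priori different Segres $S_x^{(2)}$ and $S_x^{(1)}$, one for each family.

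The paper fixes this by an explicit coupling: after building $\K_1$ from second-type lines (and invoking Theorem~\ref{NO}, using the plane to get $-K_X.\K_1\ge 3$), it sets $\M:=\bar p_3(\bar p_1^{-1}(\K_1))\cap\H_3^0$, i.e.\ the sublocus of Segres already carrying a $\K_1$-line of second type, picks an irreducible component $\M^0$ with dense locus, and defines $\V_1$ from first-type lines on Segres in $\M^0$; Theorem~\ref{NO2} then gives the extremal ray for $\V_1$. Any $S_x$ arising in this construction automatically carries a $\K_1$-line, and the numerical-proportionality argument of Lemma~\ref{planecontract} finishes. Your more direct approach can in fact be repaired by the single observation that the second-type and first-type universal families over $\H_3^0$ are each irreducible (irreducible base $\H_3^0$, irreducible fibers of distinct dimensions, hence no monodromy), so \emph{every} $S_x\in\H_3^0$ has all its second-type lines in $p_1(A^{(2)})\subset\K_1$ and all its first-type lines in $p_1(A^{(1)})\subset\V_1$; but this replaces, rather than is supplied by, your genericity argument.
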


\begin{lem} Let $\H_3$ be the open subscheme of ${\rm Hilb}_{P_3(t)}(X)$ parametrizing smooth subvarieties of $X$ with Hilbert polynomial $P_3(t)$. 
 Then $\H_3$ parametrizes copies of the Segre $3$-fold $\PP^1 \times \PP^2$ on $X$.
\end{lem}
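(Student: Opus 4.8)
The plan is to identify which smooth projective threefolds can have the Hilbert polynomial $P_3(t)$ of the Segre threefold $\PP^1 \times \PP^2 \subset \PP^5$, and then to show that the Segre threefold is the only one. First I would compute the relevant numerical invariants of $S = \PP^1 \times \PP^2$ under the Segre embedding: its degree is $3$ and its dimension is $3$, so any $[Z] \in \H_3$ parametrizes a smooth projective threefold $Z \subset \PP^N$ of degree $3$ whose Hilbert polynomial agrees with that of $S$ (in particular $Z$ is nondegenerate in its linear span, which has dimension $5$, since $P_3(t)$ records $h^0(\O_Z(1)) = 6$ for a projectively normal embedding — and one checks $S$ is projectively normal, so $P_3(1) = 6$).

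The key input is the classification of varieties of minimal degree and, more specifically, the classification of embedded projective manifolds of degree $3$ already quoted in the introduction of the paper: an embedded projective manifold of degree $3$ is a cubic hypersurface, the Segre threefold $\PP^1 \times \PP^2 \subset \PP^5$, or one of its linear sections (a cubic scroll or a twisted cubic curve), by \cite{X}. So I would invoke this directly: $Z$ has degree $3$, hence $Z$ is one of these. A threefold on this list that is nondegenerate in a $\PP^5$ must be $\PP^1 \times \PP^2 \subset \PP^5$ itself — the cubic hypersurface of dimension $3$ sits in $\PP^4$ (wrong codimension / wrong Hilbert polynomial, since its $h^0(\O(1)) = 5 \ne 6$), the cubic scroll and twisted cubic are not threefolds, and a linear section of $\PP^1\times\PP^2$ of dimension $3$ is $\PP^1\times\PP^2$ itself. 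Thus the only possibility is $Z \cong \PP^1 \times \PP^2$ embedded by the Segre embedding, i.e. a copy of the Segre threefold. Conversely, every copy of $\PP^1\times\PP^2 \subset \PP^5$ sitting inside $X$ is by definition a smooth subvariety with Hilbert polynomial $P_3(t)$, so it is parametrized by $\H_3$.

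The main obstacle, such as it is, is purely bookkeeping: one must be careful that matching the Hilbert \emph{polynomial} $P_3(t)$ (not just the degree and dimension) genuinely pins down the embedding line bundle and forces nondegeneracy in $\PP^5$ rather than a degenerate embedding into a larger space; this is exactly why the coefficient $P_3(1) = h^0(\O_S(1)) = 6$ and projective normality of $S$ matter, ruling out, say, a degree-$3$ threefold spanning only a $\PP^4$. Once the numerical data is seen to force the situation into the scope of \cite{X}, the conclusion is immediate. I would therefore phrase the proof as: by \cite{X} a smooth subvariety of $X$ with Hilbert polynomial $P_3(t)$ is a threefold of degree $3$ nondegenerate in a $\PP^5$, hence projectively equivalent to $\PP^1 \times \PP^2 \subset \PP^5$; conversely such copies have Hilbert polynomial $P_3(t)$, so $\H_3$ is exactly the parameter space of copies of the Segre threefold on $X$.
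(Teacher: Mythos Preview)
Your proposal is correct and follows essentially the same route as the paper: invoke the classification \cite{X} of smooth embedded threefolds of degree $3$, then observe that the Hilbert polynomial distinguishes the Segre threefold from the cubic hypersurface in $\PP^4$. The paper's proof consists of exactly these two sentences; your discussion of $P_3(1)=6$ versus $5$ and projective normality is a valid (if slightly over-elaborate) way to certify that the two Hilbert polynomials disagree.
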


\begin{proof} According to \cite{X}, a smooth cubic $3$-fold is projectively equivalent to a cubic hypersurface or the Segre $3$-fold $\PP^1 \times \PP^2$. The Hilbert polynomial of a cubic hypersurface does not coincide with $P_3(t)$. Hence our assertion holds.
\end{proof}

Because $X$ is swept out by copies of the Segre $3$-fold $\PP^1 \times \PP^2$, we see that $\overline{{\rm Locus}(\H_3)}=X$. 
Let $\H_3^0$ be an irreducible component of $\H_3$ such that $\overline{{\rm Locus}(\H_3^0)}=X$.

\begin{lem}\label{k1} There exists a covering family of lines $\K_1 \subset F_1(X)$ such that 
,through a general point $x$ on $X$, there exists a  $\K_1$-line $l$ of the second type on some $S$ with $[S] \in \H_3^0$. Furthermore $[\K_1]$ spans an extremal ray of $NE(X)$.
\end{lem}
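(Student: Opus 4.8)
The plan is to repeat, \emph{mutatis mutandis}, the construction of Lemma~\ref{famcubic}; the one new feature is that the Fano scheme of lines on a smooth Segre $3$-fold is reducible, so one has to keep track of which ruling a line belongs to.

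First I would record the geometry of a single smooth Segre $3$-fold $S \cong \PP^1 \times \PP^2$. By definition a second-type line on $S$ is one contracted by $\mathrm{pr}_1 : \PP^1 \times \PP^2 \to \PP^1$, hence of the form $\{p\} \times \ell$ with $p \in \PP^1$ and $\ell \subset \PP^2$ a line; thus the Hilbert scheme $F_1^{\mathrm{II}}(S)$ of second-type lines on $S$ is isomorphic to $\PP^1 \times (\PP^2)^{\vee}$. In particular $F_1^{\mathrm{II}}(S)$ is irreducible, and through every point of $S$ there passes a pencil of second-type lines, so $F_1^{\mathrm{II}}(S)$ sweeps out all of $S$.

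Next, inside ${\rm FH}_{\PP_{13}(t)}(X)$ consider the locus $\mathcal W$ of pairs $([l],[S])$ with $[S] \in \H_3^0$ and $l$ a second-type line on $S$; this is a constructible set whose fiber over any $[S] \in \H_3^0$ is $F_1^{\mathrm{II}}(S)$. Since ${\rm Locus}(\H_3^0)=\bigcup_{[S]\in\H_3^0}S$ is dense in $X$ and each such $S$ is covered by its second-type lines, the set $p_1(\mathcal W) \subseteq F_1(X)$ has locus dense in $X$. Now I would argue exactly as in the proof of Lemma~\ref{famcubic}, with $F_1^{\mathrm{II}}(S)$ playing the role there played by the irreducible Fano scheme of lines on a smooth cubic hypersurface: choose an irreducible component $\K_1^0$ of $p_1(\mathcal W)$ with $\overline{{\rm Locus}(\K_1^0)} = X$; for general $x \in X$ pick a $\K_1^0$-line $l_x$ through $x$ which lies on some $S_x$ with $[S_x] \in \H_3^0$, is of the second type on $S_x$, and is not contained in any irreducible component of $p_1(\mathcal W)$ other than $\K_1^0$; then $F_1^{\mathrm{II}}(S_x)$, being irreducible, contained in $p_1(\mathcal W)$, and meeting $\K_1^0$ at $[l_x]$, lies entirely in $\K_1^0$. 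Letting $\K_1$ be the irreducible component of $F_1(X)$ containing $\K_1^0$, we get that ${\rm Locus}(\K_1)$ is closed and contains $\overline{{\rm Locus}(\K_1^0)} = X$, so $\K_1$ is a covering family of lines, and by construction, through a general $x$ the family $\K_1$ contains every second-type line of $S_x$ — in particular at least one, which is what the first assertion demands.

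Finally, since $X$ has dimension $5$ and $\K_1$ is a covering family of lines on $X$, Theorem~\ref{NO2} applies verbatim and yields that $[\K_1]$ spans an extremal ray of $NE(X)$. The only step requiring genuine care is the middle one, namely ensuring that the component $\K_1$ singled out really records the second-type ruling and is covering; but this is formally identical to Lemma~\ref{famcubic}, with the irreducibility of $F_1^{\mathrm{II}}(S)\cong\PP^1\times(\PP^2)^{\vee}$ substituting for the irreducibility of the Fano scheme of lines on a smooth cubic hypersurface. The extremal-ray claim is then immediate.
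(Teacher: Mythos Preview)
Your proof is correct and follows essentially the same approach as the paper's. The only differences are cosmetic: the paper extracts second-type lines scheme-theoretically via the flag Hilbert scheme ${\rm FH}_{\PP_{123}(t)}(X)$ (using that second-type lines are exactly those lying in a plane of $S$, so $\bar{p_1}(\bar{p_3}^{-1}(\H_3^0))$ is your $p_1(\mathcal W)$), and for the extremal-ray claim it invokes Theorem~\ref{NO} after observing $-K_X.\K_1 \geq 3$ via the argument of Lemmas~\ref{fam} and~\ref{m}, rather than Theorem~\ref{NO2}.
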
 

\begin{proof} Remark that any plane contained in a copy $S$ of the Segre $3$-fold $\PP^1 \times \PP^2$ is contracted to a point by the first projection $S=\PP^1 \times \PP^2 \rightarrow \PP^1$. Hence $\bar{p_1}(\bar{p_3}^{-1}(\H_3^0))$ parametrizes lines of the second type in copies of the Segre $3$-fold $\PP^1 \times \PP^2$ contained in $\H_3^0$. Since ${\rm Locus}(\bar{p_1}(\bar{p_3}^{-1}(\H_3^0)))={\rm Locus}(\H_3^0)$, we can take an irreducible component $\K_1^0$ of $\bar{p_1}(\bar{p_3}^{-1}(\H_3^0))$ such that $\overline{{\rm Locus}(\K_1^0)}=X$. Thus set $\K_1$ as an irreducible component of $F_1(X)$ containing $\K_1^0$. 

By the same argument as in Lemma~\ref{fam} and Lemma~\ref{m}, we see that $-K_X.\K_1 \geq 3$. Hence it follows from Theorem~\ref{NO} that $[\K_1]$ spans an extremal ray of $NE(X)$. 
\end{proof}

Denote by $\varphi_{\K_1}$ the contraction of the extremal ray $\RR_{\geq 0}[\K_1]$. 
From the construction of the family $\K_1$, $\varphi_{\K_1}$ satisfies the following property: 

\begin{lem}\label{planecontract}
Through a general point $x$ on $X$, there exists a copy $S$ of the Segre $3$-fold $\PP^1 \times \PP^2$ such that $[S] \in \H_3^0$ and every plane on $S$ is contracted by $\varphi_{\K_1}$.
\end{lem}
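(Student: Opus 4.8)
The plan is to show that the contraction $\varphi_{\K_1}$ built in Lemma~\ref{k1} actually contracts \emph{every} plane contained in a general copy $S$ of the Segre $3$-fold, not merely the one plane witnessing membership of a $\K_1^0$-line. First I would fix a general point $x \in X$ and, using Lemma~\ref{k1}, pick a copy $S = S_x \subset X$ with $[S] \in \H_3^0$ carrying a $\K_1$-line $l$ of the second type through $x$. The key observation is that on $S = \PP^1 \times \PP^2$ the lines of the second type are precisely the rulings $\{p\} \times (\text{line in }\PP^2)$, and these sweep out $S$: every point of $S$ lies on such a line, and moreover any two lines of the second type on $S$ are algebraically equivalent inside $S$ (they form a single irreducible family, parametrized by $\PP^1 \times G(2,\CC^3)$). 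Hence all lines of the second type on $S$ have the same numerical class in $X$ up to the deformation within $\K_1$, so they all lie in $\K_1$ (invoking that $\K_1$ is an irreducible component of $F_1(X)$ through which an irreducible, locus-covering family of such lines passes). Consequently each such line is contracted by $\varphi_{\K_1}$ to a point.

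Next I would argue that this forces every plane on $S$ to be contracted as well. A plane $P \subset S = \PP^1 \times \PP^2$ is exactly a fiber $\{p\} \times \PP^2$ of the first projection, and $P$ is swept out by lines of the second type lying on $S$ (namely the lines in that $\PP^2$). Since each of these lines is a $\K_1$-line and is therefore contracted to a point by $\varphi_{\K_1}$, and since $P$ is connected and covered by such curves through any given point, the image $\varphi_{\K_1}(P)$ is a single point. Thus every plane on $S$ is contracted by $\varphi_{\K_1}$, which is precisely the assertion of Lemma~\ref{planecontract}.

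To make this airtight I would include the elementary verification that on $S=\PP^1\times\PP^2$ the lines of the second type are numerically equivalent to one another in $S$ (hence a fortiori in $X$), which is immediate from $\operatorname{Pic}(\PP^1\times\PP^2)\cong\ZZ^2$ and the description of such a line as a fiber-class curve; and the verification that such lines belong to the component $\K_1$ rather than some other component of $F_1(X)$ — this follows because the family of all lines of the second type in copies $S$ with $[S]\in\H_3^0$ is irreducible (it maps onto $\H_3^0$ with irreducible fibers $\PP^1\times G(2,\CC^3)$) and by construction meets $\K_1^0$, hence is contained in $\K_1$. I expect the main obstacle to be the careful bookkeeping of which component of the Hilbert scheme of lines one lands in: one must ensure that as $S$ varies over a general point of $\H_3^0$ and the line of the second type varies over the whole of $S$, the resulting family is irreducible and stays inside the single component $\K_1$; this is handled exactly as in the proof of Lemma~\ref{famcubic} and Lemma~\ref{fam}, using the irreducibility of the relevant flag Hilbert scheme fibers, together with the fact that a general such line is free (so lies in a unique component of $F_1(X)$).
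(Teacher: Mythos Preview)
Your argument is correct, but it works harder than necessary. The paper's proof is a two-line observation: since $\varphi_{\K_1}$ is the contraction of the extremal ray $\RR_{\geq 0}[\K_1]$, it contracts \emph{every} curve whose numerical class lies in that ray, regardless of which irreducible component of $F_1(X)$ the curve belongs to. On $S=\PP^1\times\PP^2$ all curves contained in planes are numerically proportional to one another (they are positive multiples of the class of a line of the second type), and one of these lines---the $\K_1$-line $l$ supplied by Lemma~\ref{k1}---is contracted. Hence every such curve is contracted, and each plane on $S$ collapses to a point.

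The difference is that you expend effort establishing that every line of the second type on $S$ is actually a member of the family $\K_1$, which requires the irreducibility argument for $\bar{p_1}(\bar{p_3}^{-1}(\H_3^0))$ that you sketch in your third paragraph. The paper sidesteps this entirely: it never claims those lines lie in $\K_1$, only that they lie in the ray $\RR_{\geq 0}[\K_1]\subset NE(X)$, which follows immediately from numerical equivalence with $l$. Your route buys a slightly stronger intermediate statement (all second-type lines on a general $S$ are $\K_1$-lines), but that strength is not needed for the lemma, and the paper's route avoids the component-tracking you flag as the ``main obstacle.'' Incidentally, your sentence ``all lines of the second type on $S$ have the same numerical class in $X$\ldots, so they all lie in $\K_1$'' conflates numerical equivalence with Hilbert-scheme membership; you correct for this later, but the paper's approach shows that the numerical statement alone already suffices.
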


\begin{proof} Note that all the curves contained in planes on a copy $S$ of the Segre $3$-fold $\PP^1 \times \PP^2$ are numerically proportional to each other and a contraction of an extremal ray contracts all curves which are numerically proportional to a contracted curve. From Lemma~\ref{k1}, through a general point $x$ on $X$, there exists a  $\K_1$-line $l$ of the second type on some $S$ with $[S] \in \H_3^0$. Because $l$ is contracted by $\varphi_{\K_1}$, every plane on $S$ is contracted by $\varphi_{\K_1}$.
\end{proof}

\begin{lem} There exists a covering family of lines $\V_1 \subset F_1(X)$ such that, 
through a general point $x$ on $X$, there exists a $\V_1$-line $l$ of the first type on some $S$ with $[S] \in \H_3^0$, and $S$ contains a $\K_1$-line of the second type. Furthermore $[\V_1]$ spans an extremal ray of $NE(X)$ and every line on $S$ in $\H_3^0$ of the first type is contracted by $\varphi_{\V_1}$, where $\varphi_{\V_1}$ is the contraction of the extremal ray $\RR_{\geq 0}[\V_1]$.  

\end{lem}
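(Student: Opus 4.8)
The plan is to mimic the construction of $\K_1$ in Lemma~\ref{k1}, but starting from the locus of lines of the first type rather than the second type, and then using Lemma~\ref{planecontract} to pin down the behaviour of $\varphi_{\V_1}$. First I would observe that, since any plane on a copy $S$ of $\PP^1 \times \PP^2$ is contracted by the first projection $S \to \PP^1$, the scheme $p_1(p_3^{-1}(\H_3^0))$ parametrizes lines on copies $S$ with $[S] \in \H_3^0$; filtering by type, the lines of the first type in such copies form a subvariety whose locus still covers $X$ (since a single line of the first type together with the planes through its points already sweeps out $S$, so the union over $\H_3^0$ covers $X$). Thus I can pick an irreducible component $\V_1^0$ of that locus with $\overline{{\rm Locus}(\V_1^0)} = X$, and set $\V_1$ to be an irreducible component of $F_1(X)$ containing $\V_1^0$. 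By the same computation as in Lemma~\ref{fam} and Lemma~\ref{m} (the variety of minimal rational tangents at a general point contains a line, coming from a plane on $S$ that meets both a first-type and a second-type line), one gets $-K_X.\V_1 \geq 3$, and hence Theorem~\ref{NO} applies to give that $[\V_1]$ spans an extremal ray of $NE(X)$; let $\varphi_{\V_1}$ be its contraction.

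Next I would arrange the compatibility between $\V_1$ and $\K_1$: for a general $x \in X$, Lemma~\ref{k1} gives a copy $S$ with $[S] \in \H_3^0$ carrying a $\K_1$-line of the second type through $x$, and I want the same $S$ to carry a $\V_1$-line of the first type. This should follow because $\V_1^0$ was built from \emph{all} copies in $\H_3^0$, so a general such $S$ simultaneously realizes a second-type line in $\K_1^0$ and a first-type line in $\V_1^0$; more precisely, I would work on the flag scheme ${\rm FH}_{\PP_{13}(t)}$ restricted over $\H_3^0$, noting that the two type-loci are irreducible over a general point of $\H_3^0$ (the lines of each type on $\PP^1 \times \PP^2$ form an irreducible family), so the component of $\V_1$ and the component of $\K_1$ selected at a general $S$ are forced.

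Finally, for the last clause I would invoke Lemma~\ref{planecontract}: for a general $x$ there is $S$ with $[S] \in \H_3^0$ all of whose planes are contracted by $\varphi_{\K_1}$; a line of the first type on $S$ is contracted by the first projection $S \to \PP^1$ exactly when it lies in a plane of $S$, but in fact every line of the first type on $\PP^1 \times \PP^2$ is $\{pt\} \times (\text{line in } \PP^2)$ and hence lies in the plane $\{pt\} \times \PP^2$, so each first-type line on $S$ is numerically proportional to a curve in a plane of $S$. Since all curves in planes on $S$ are numerically proportional to one another and, as $\V_1^0 \subset \V_1$ contains a first-type line on such an $S$, that class is $[\V_1]$; therefore every first-type line on every $S$ with $[S] \in \H_3^0$ has numerical class on $\RR_{\geq 0}[\V_1]$ and is contracted by $\varphi_{\V_1}$.

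I expect the main obstacle to be the second step, namely forcing a single general copy $S$ to carry simultaneously a $\K_1$-line of the second type \emph{and} a $\V_1$-line of the first type, since $\K_1$ and $\V_1$ were chosen as (a priori only) components of $F_1(X)$ containing the respective type-loci; one must check that the irreducible components $\K_1^0$ and $\V_1^0$ can be taken to dominate a common component $\H_3^0$ of $\H_3$ and that a general point of $\H_3^0$ then sees both, which is where a little care with the fibre structure of $\overline{p_1}$ and $\overline{p_3}$ over $\H_3^0$ is needed.
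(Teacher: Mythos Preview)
Your proposal has two genuine gaps.

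\textbf{First, you have reversed the two types of lines.} In the paper's convention, a line \emph{of the first type} on $S=\PP^1\times\PP^2$ is one contracted by the second projection $S\to\PP^2$; concretely it is $\PP^1\times\{q\}$. A line \emph{of the second type} is $\{p\}\times\ell$ for $\ell\subset\PP^2$ a line, and these are exactly the lines lying in the planes $\{p\}\times\PP^2$. Your third paragraph asserts that first-type lines lie in planes of $S$, and uses this to conclude they are numerically proportional to $\K_1$-lines; this is false. First-type lines meet each plane of $S$ in a single point and are \emph{not} numerically proportional (in $S$) to second-type lines. The correct argument for the last clause is the one in Lemma~\ref{planecontract}: all first-type lines in a fixed $S$ are fibers of $S\to\PP^2$, hence mutually numerically proportional in $X$; once one of them is a $\V_1$-line, all of them lie in $\RR_{\geq 0}[\V_1]$ and are contracted by $\varphi_{\V_1}$.

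\textbf{Second, your appeal to Theorem~\ref{NO} is unjustified.} You claim $-K_X.\V_1\geq 3$ by saying the variety of minimal rational tangents for $\V_1$ contains a projective line ``coming from a plane on $S$''. But through a point $x\in S$ there is exactly \emph{one} first-type line in $S$, so a single copy $S$ contributes only a point to $(\V_1)_x$, not a $\PP^1$. There is no evident lower bound $-K_X.\V_1\geq 3$; the paper does not attempt one, and instead invokes Theorem~\ref{NO2}, which applies to any covering family of lines on a manifold of dimension at most five. You should do the same.

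On the compatibility issue you flag as the main obstacle, the paper handles it more directly than your post-hoc argument: it first forms $\M:=\bar p_3(\bar p_1^{-1}(\K_1))\cap\H_3^0$, the locus of copies $S\in\H_3^0$ that already contain a $\K_1$-line of the second type, checks $\overline{{\rm Locus}(\M)}=X$ (since ${\rm Locus}(\K_1^0)\subset{\rm Locus}(\M)$), picks a dominating component $\M^0$, and only \emph{then} extracts $\V_1^0$ from the first-type lines on copies in $\M^0$. This bakes the required compatibility into the construction and avoids the component-matching problem you anticipated.
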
 

\begin{proof} We use the notation as in the proof of Lemma~\ref{k1}. Set $\M:=\bar{p_3}(\bar{p_1}^{-1}(\K_1))\cap \H_3^0$. Given any $S$ which is a member of $\M$, there exists a $\K_1$-line $m$ of the second type in $S$. From the definition of $\K_1^0$, we see that ${\rm Locus}(\K_1^0) \subset {\rm Locus}(\M)$ and $\overline{{\rm Locus}(\K_1^0)}=X$. It implies that $\overline{{\rm Locus}(\M)}=X$. So there exists an irreducible component $\M^0$ of $\M$ such that $\overline{{\rm Locus}(\M^0)}=X$. Then ${p_1}({p_3}^{-1}(\M^0))$ parametrizes lines in copies of the Segre $3$-fold $\PP^1 \times \PP^2$ contained in $\M^0$. On the other hand, $\bar{p_1}(\bar{p_3}^{-1}(\M^0))$ parametrizes lines of the second type in copies of the Segre $3$-fold $\PP^1 \times \PP^2$ contained in $\M^0$. Furthermore $\bar{p_1}(\bar{p_3}^{-1}(\M^0))$ is a proper closed subset of ${p_1}({p_3}^{-1}(\M^0))$ and ${\rm Locus}(\M^0)={\rm Locus}({p_1}({p_3}^{-1}(\M^0)) \setminus \bar{p_1}(\bar{p_3}^{-1}(\M^0))$. So we have an irreducible component $\V_1^0$ of ${p_1}({p_3}^{-1}(\M^0)) \setminus \bar{p_1}(\bar{p_3}^{-1}(\M^0))$ such that $\overline{{\rm Locus}(\V_1^0)}=X$. Thus set $\V_1$ as an irreducible component of $F_1(X)$ containing $\V_1^0$. The second assertion is derived from Theorem~\ref{NO2} and the same argument as in Lemma~\ref{planecontract}.  
\end{proof}

Hence Proposition~\ref{summing} holds.

We may consider $\varphi_{\K_1}$ as the contraction of an extremal ray $\varphi_{\K}$ appeared in Section~\ref{planetwo}. Now $X$ is isomorphic to one of manifolds as in Theorem~\ref{plane}. We study $X$ according to each case.

\subsection{Cases $\rm (ii)-(iv)$}

In this subsection, we study the case where $X$ is isomorphic to one of manifolds as in $\rm (ii)-(iv)$ of Theorem~\ref{plane}. 
 
\begin{pro}\label{sch} Every smooth hyperplane section of the Grassmann variety $G(2,\CC^5) \subset \PP_{\ast}(\bigwedge^2\CC^5)$ is swept out by copies of the Segre $3$-fold $\PP^1 \times \PP^2$.
\end{pro}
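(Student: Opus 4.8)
The plan is to exhibit, through a general point of a smooth hyperplane section $X = G(2,\CC^5) \cap M$, an explicit copy of the Segre $3$-fold $\PP^1 \times \PP^2$ lying on $X$. The natural candidates come from the family of sub-Grassmannians inside $G(2,\CC^5)$: for a flag $V_1 \subset V_4 \subset \CC^5$ with $\dim V_i = i$, the set of $2$-planes $U$ with $V_1 \subset U \subset V_4$ is a copy of $G(1, V_4/V_1) \cong \PP^2$, and the set of $2$-planes with $V_1 \subset U \subset V_5 = \CC^5$ that lie in some fixed $V_4$ sweeps out a larger locus. More usefully, the classical description of linear spaces on $G(2,\CC^5)$ gives, for each such flag, $3$-dimensional subvarieties which are Segre threefolds: fixing a hyperplane $V_4$ and a line $V_1 \subset V_4$ one already has a $\PP^2$; to get a $\PP^1 \times \PP^2$ one takes the incidence variety $\{U : V_1 \subset U,\ U \subset V_4 \text{ or } \dim(U \cap V_4) \geq 1\}$ appropriately, or — cleanest — one recalls that $G(2,\CC^5)$ is swept out by copies of $\PP^1 \times \PP^2$ coming from the Segre embedding $\PP(V_2) \times \PP(\CC^5/V_1') $ type constructions realized inside the Plücker embedding. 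I would therefore first set up the parameter space of such Segre threefolds $S \subset G(2,\CC^5)$ and check it has dimension large enough that through a general point of $G(2,\CC^5)$ there is an at least $(\geq \operatorname{codim})$-dimensional family of them.

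The key point is then a Fulton–Hansen / Bertini style dimension count: a hyperplane $M$ cuts each such Segre threefold $S \subset \PP^5 \cong \langle S\rangle$ (or $S$ spans only a $\PP^5$, so $M \cap \langle S\rangle$ is a hyperplane of that $\PP^5$) in a hyperplane section, which is generically a cubic scroll, not a $\PP^1 \times \PP^2$. So the naive "restrict and it survives" argument fails; instead I want to argue that the family of Segre threefolds through a general point $x \in X$ that are \emph{entirely contained in $M$} is still nonempty. Concretely: fix $x \in X$; inside $G(2,\CC^5)$ the Segre threefolds through $x$ form a family $\Sigma_x$ of some dimension $\delta$; the condition "$S \subset M$" is the vanishing of finitely many linear conditions on the Plücker coordinates, and since $S$ spans a fixed $\PP^5$, $S \subset M$ is cut out by at most $\dim \PP^9 - 5 = 4$ conditions on the span, i.e. by at most $4$ linear conditions on the parameters of $\Sigma_x$ (one checks this using that $\langle S \rangle$ varies in a family of the expected dimension). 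If $\delta \geq 5$ — which I would verify by a direct count in the Grassmannian — then these Segre threefolds contained in $M$ and passing through $x$ form a nonempty family, and a general member is smooth by generic smoothness since $x$ is general. The referee-simplified argument alluded to in the acknowledgements presumably replaces this count by the cleaner observation that $G(2,\CC^5)$ has a covering family of $\PP^1 \times \PP^2$'s of the \emph{right} dimension so that $\dim \Sigma_x$ exceeds $\operatorname{codim}_{\PP^9} M = 1$, but one needs enough room to kill the span conditions, not just one; I expect the actual proof pins down $\dim \Sigma_x$ precisely.

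I would carry it out in three steps. Step 1: describe explicitly the Segre threefolds on $G(2,\CC^5)$ — e.g. for a subspace decomposition $\CC^5 = \CC^2 \oplus \CC^3$ the locus $\{U : \dim(U \cap \CC^2) \geq 1\}$ is such a threefold, and translating by $GL_5$ gives an orbit of them — and compute the dimension of the family together with the dimension of the subfamily through a fixed general point $x$. Step 2: show that the condition of being contained in a given hyperplane $M$ imposes at most four independent linear conditions on that pointed family (using that each such threefold spans exactly a $\PP^5$, and that the span map has fibers of the expected dimension), and conclude the subfamily is nonempty. Step 3: invoke generic smoothness / Bertini at the general point $x$ to get that a general member of this nonempty subfamily is a \emph{smooth} $\PP^1 \times \PP^2$ on $X$ through $x$; since $x$ was general, $X$ is swept out by such copies. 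The main obstacle is Step 1–2: getting the dimension count exactly right so that enough conditions remain to force $S \subset M$, rather than merely $S \cap M$ being a threefold. Everything after the existence of one smooth copy through a general point is formal.
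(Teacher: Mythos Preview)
Your proposal contains a concrete dimension error that, once corrected, collapses the whole argument to the paper's two-line proof. The locus you single out in Step~1, namely $\{U \in G(2,\CC^5) : \dim(U\cap V_2)\ge 1\}$ for a fixed $2$-plane $V_2\subset\CC^5$, is \emph{not} a threefold: it is the Schubert variety $\Omega(1,4)$ of lines in $\PP^4$ meeting the fixed line $\PP(V_2)$, which has codimension~$2$ in the six-dimensional $G(2,\CC^5)$ and hence dimension~$4$. Concretely, with $V_2=\langle e_1,e_2\rangle$ it lies in the $\PP^6$ cut out by $p_{34}=p_{35}=p_{45}=0$ and is defined there by the $2\times 2$ minors of $\bigl(\begin{smallmatrix}p_{13}&p_{14}&p_{15}\\ p_{23}&p_{24}&p_{25}\end{smallmatrix}\bigr)$; thus $\Omega(1,4)$ is the cone with vertex $[V_2]$ over the Segre threefold $\PP^1\times\PP^2\subset\PP^5$.

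That observation \emph{is} the paper's proof. Any hyperplane section of a cone missing the vertex is projectively equivalent to the base, so $M\cap\Omega(1,4)\cong\PP^1\times\PP^2$ whenever $[V_2]\notin M$. As $V_2$ ranges over $G(2,\CC^5)$ these Schubert varieties cover $G$, hence their sections cover $X=G\cap M$, and for general $V_2$ the vertex avoids $M$. No parameter count, Fulton--Hansen, or Bertini is needed. Your Steps~1--2 instead try to locate Segre threefolds already sitting inside $G$ and then force $\langle S\rangle\subset M$; but the estimate of ``at most $4$ conditions'' is off (containment of a $\PP^5$ in a fixed hyperplane of $\PP^9$ is a priori six linear conditions, and one must still check they cut the pointed family transversally), and carrying that through is substantially more delicate than simply letting $M$ slice the cone.
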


\begin{proof} Let $\Omega(1,4) \subset G(1, \PP^4)$ be a Schubert variety parametrizing the lines in $\PP^4$ meeting a given line. Because a general hyperplane section of $\Omega(1,4)$ is a copy of the Segre $3$-fold $\PP^1 \times \PP^2$, our assertion holds. 
\end{proof}

We use a corollary of Zak's theorem on tangencies \cite[Corollary~1.8]{Za}:

\begin{them}[{\cite[Proposition~4.2.8, Corollary~4.2.10]{FOV}}]\label{zaktang} Let $X \subset \PP^N$ be a nondegenerate subvariety and $\Lambda \subset \PP^N$ a linear subspace of codimension $r$. Assume that 
\begin{eqnarray}
N-\dim X + \dim (\Lambda \cap {\rm Sing}(X)) +r-1 < \dim X -r, \nonumber
\end{eqnarray}
where ${\rm Sing}(X)$ stands for the singular locus of $X$. 

Then $\dim X \cap \Lambda =\dim X -r$ and 
\begin{eqnarray}
\dim {\rm Sing}(X \cap \Lambda) \leq  N-\dim X + \dim (\Lambda \cap {\rm Sing}(X)) +r-1. \nonumber
\end{eqnarray}

Furthermore, if $X$ is locally Cohen-Macaulay and 
\begin{eqnarray}
2 \dim X - N- \dim (\Lambda \cap {\rm Sing}(X)) -2r \geq 0, \nonumber
\end{eqnarray}

then $X \cap \Lambda$ is reduced.
\end{them}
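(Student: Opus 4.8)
The plan is to deduce all three assertions from Zak's theorem on tangencies, which bounds the locus along which a linear subspace is tangent to a nondegenerate variety. Write $n=\dim X$ and $e=\dim(\Lambda\cap{\rm Sing}(X))$ (convention: $\dim\emptyset=-1$). For a point $x\in X\setminus{\rm Sing}(X)$ let $\mathbb{T}_xX\subset\PP^N$ be the embedded projective tangent space, a linear subspace of dimension $n$, and set $\lambda(x):=\dim(\mathbb{T}_xX\cap\Lambda)$. Since $X$ is smooth at $x$, the Zariski tangent space of the scheme $X\cap\Lambda$ at $x$ is $\mathbb{T}_xX\cap\Lambda$; hence $X\cap\Lambda$ is smooth of dimension $n-r$ at $x$ precisely when $\lambda(x)=n-r$, while every local component of $X\cap\Lambda$ through $x$ has dimension $>n-r$ when $\lambda(x)\geq n-r+1$. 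Therefore
\[
{\rm Sing}(X\cap\Lambda)_{\rm red}\ \subseteq\ \bigl({\rm Sing}(X)\cap\Lambda\bigr)\cup T,\qquad T:=\{\,x\in X\setminus{\rm Sing}(X) : \lambda(x)\geq n-r+1\,\},
\]
and any irreducible component of $X\cap\Lambda$ of dimension $>n-r$ is contained in $({\rm Sing}(X)\cap\Lambda)\cup\overline{T}$.

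The crucial step is to bound $\dim T$. A smooth point $x$ lies in $T$ exactly when $\langle\mathbb{T}_xX,\Lambda\rangle\neq\PP^N$, i.e. when some hyperplane contains both $\mathbb{T}_xX$ and $\Lambda$. Since the hyperplanes through $\Lambda$ form a linear system $\PP^{r-1}$, I would introduce the incidence variety $\Phi=\{\,(x,H)\in(X\setminus{\rm Sing}(X))\times\PP^{r-1} : \mathbb{T}_xX\subseteq H\,\}$ with its two projections. The fibre of $\Phi\to\PP^{r-1}$ over $H$ is the contact locus $\{\,x\in X\setminus{\rm Sing}(X) : \mathbb{T}_xX\subseteq H\,\}$, and Zak's theorem on tangencies, applied to the nondegenerate $X$ and the hyperplane $H$, bounds its dimension by $N-1-n$. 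Hence $\dim\Phi\leq(r-1)+(N-1-n)=N-n+r-2$, and as $T$ is the image of $\Phi$ in $X\setminus{\rm Sing}(X)$, we get $\dim T\leq N-n+r-2$. This invocation of Zak's tangency theorem is the only non-formal ingredient, and is the step I expect to be the main obstacle: one needs the version of it valid for a possibly singular (but nondegenerate) $X$ and for an arbitrary hyperplane through $\Lambda$, not merely a general one.

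The rest is bookkeeping with the hypothesis $N-n+e+r-1<n-r$, which over the integers forces both $e<n-r$ and $N-n+r-2<n-r$ (using $e\geq-1$ and $N\geq n$), and also $n\geq r$. By the projective dimension theorem $X\cap\Lambda$ is nonempty and every component has dimension $\geq n-r$; if a component $W$ had dimension $>n-r$ then $W\subseteq({\rm Sing}(X)\cap\Lambda)\cup\overline{T}$ would give $\dim W\leq\max(e,N-n+r-2)<n-r$, a contradiction, so $\dim(X\cap\Lambda)=n-r$. The displayed inclusion then gives $\dim{\rm Sing}(X\cap\Lambda)\leq\max(e,N-n+r-2)\leq N-n+e+r-1$, which is the asserted inequality.

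For the reducedness statement, assume in addition that $X$ is locally Cohen--Macaulay (the hypothesis $2\dim X-N-\dim(\Lambda\cap{\rm Sing}(X))-2r\geq0$ being the previous numerical condition restated over the integers). Since $\dim(X\cap\Lambda)=\dim X-r$ and $X$ is Cohen--Macaulay, the $r$ linear forms cutting out $\Lambda$ form a regular sequence in the local rings of $X$ along $X\cap\Lambda$, so $X\cap\Lambda$ is Cohen--Macaulay; in particular it satisfies Serre's condition $S_1$ and has no embedded components. Moreover the bad locus $({\rm Sing}(X)\cap\Lambda)\cup T$ has dimension $<n-r=\dim(X\cap\Lambda)$, so the generic point of every component of $X\cap\Lambda$ is a smooth point of $X$ at which $\Lambda$ meets $X$ transversally, where $X\cap\Lambda$ is smooth; thus $X\cap\Lambda$ satisfies $R_0$. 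By Serre's criterion, $X\cap\Lambda$ is reduced.
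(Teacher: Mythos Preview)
The paper does not prove Theorem~\ref{zaktang}; it merely quotes it from \cite[Proposition~4.2.8, Corollary~4.2.10]{FOV}, announcing it as ``a corollary of Zak's theorem on tangencies''. So there is no in-paper argument to compare against.

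Your proposed argument is correct and is in fact the standard route to this statement (and, as far as I can tell, the one taken in \cite{FOV}). The key idea---bounding the locus $T$ of smooth points where $\Lambda$ fails to be transverse by fibering over the $\PP^{r-1}$ of hyperplanes through $\Lambda$ and invoking Zak's tangency bound $\dim\{x\in X_{\mathrm{sm}}:\mathbb{T}_xX\subset H\}\le (N-1)-n$ on each fibre---is exactly the intended mechanism, and you have isolated it cleanly. The version of Zak's theorem you need (for irreducible nondegenerate, possibly singular $X$ and an arbitrary linear space of dimension $\ge\dim X$) is precisely \cite[Corollary~1.8]{Za}, so the concern you flag is not an actual obstacle. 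The bookkeeping that $\max(e,\,N-n+r-2)\le N-n+e+r-1$ and that both quantities are $<n-r$ under the hypothesis is fine, as is your observation that over $\ZZ$ the second displayed inequality is a restatement of the first. For reducedness, the Cohen--Macaulay hypothesis plus the established equality $\dim(X\cap\Lambda)=n-r$ gives that the $r$ linear forms cut $X$ in a regular sequence, hence $X\cap\Lambda$ is Cohen--Macaulay and has no embedded points; together with generic smoothness (from $\dim\mathrm{Sing}(X\cap\Lambda)<n-r$) this yields $R_0+S_1$, hence reduced. One small wording issue: when $\lambda(x)\ge n-r+1$ you cannot conclude that every local component has dimension $>n-r$, only that the Zariski tangent space is too big; but you only use the converse direction (a component of excess dimension forces $\lambda(x)>n-r$ at its smooth points), which is correct.
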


\begin{pro} A $5$-dimensional smooth quadric hypersurface $Q^5 \subset \PP^6$ does not contain a copy of the Segre $3$-fold $\PP^1 \times \PP^2$.
\end{pro}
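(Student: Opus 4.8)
The plan is to derive a contradiction from the existence of a copy $S = \PP^1 \times \PP^2$ inside $Q^5 \subset \PP^6$ by analyzing linear spans. First I would observe that since $S$ is nondegenerate in its linear span $\langle S \rangle \cong \PP^5$ (the Segre embedding $\PP^1 \times \PP^2 \hookrightarrow \PP^5$ is linearly normal), the intersection $Q^5 \cap \langle S \rangle$ is a quadric hypersurface in $\PP^5 = \langle S \rangle$ — either all of $\PP^5$ (impossible, as $Q^5$ is irreducible and nondegenerate in $\PP^6$, so it cannot contain a hyperplane), or a genuine quadric $Q'$ of dimension $4$ in $\PP^5$. Thus $Q'$ is a (possibly singular) $4$-dimensional quadric containing $S = \PP^1 \times \PP^2$, which has dimension $3$. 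Since $\deg S = 3$ while $\deg Q' = 2$, the threefold $S$ cannot be a Cartier divisor cut out by a further hypersurface of $Q'$ in the naive degree count; the crucial point is that $S$ sits as a codimension-one subvariety of the irreducible quadric $Q'$ but with the wrong degree, which already forces $Q'$ to be reducible or non-reduced — impossible since $Q'$ is a quadric hypersurface, hence reduced and irreducible unless it is a union of two hyperplanes or a double hyperplane.

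So the real case to rule out is $Q' = H_1 \cup H_2$ a union of two distinct hyperplanes $\PP^4 \subset \langle S \rangle$, or $Q' = 2H$ a double hyperplane. In either situation $S$, being irreducible of dimension $3$, must lie inside one of these hyperplanes $\PP^4 \subset \PP^5 = \langle S \rangle$. But that contradicts the nondegeneracy of $S$ in $\langle S \rangle$. This disposes of all sub-cases, so no copy of $S$ lies in $Q^5$.

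Alternatively — and this may be cleaner to write — I would invoke Zak's theorem on tangencies in the form of Theorem~\ref{zaktang}, or directly Proposition~\ref{Zak}: apply Proposition~\ref{Zak} with $X = Q^5 \subset \PP^6$ (so $N = 6$, $n = 5$, codimension $1$) and $Y = S$ the Segre threefold. We have $\mathrm{codim}_{\langle S \rangle}(S) = 5 - 3 = 2$, which is \emph{not} less than $\mathrm{codim}_{\PP^6}(Q^5) = 1$, so Proposition~\ref{Zak} does not directly apply; hence I would instead argue via linear spans as above, noting that the hypothesis failure is precisely what signals the degree obstruction. The main obstacle is handling the degenerate quadric cases ($Q'$ reducible or a double hyperplane) carefully, but in each the irreducibility and nondegeneracy of $S = \PP^1 \times \PP^2$ in its $\PP^5$-span immediately yields the contradiction, so the argument closes cleanly.
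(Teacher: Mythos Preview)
Your setup matches the paper's: pass to the linear span $\langle S\rangle \cong \PP^5$ and study the hyperplane section $Q' = Q^5 \cap \langle S\rangle$. The gap is the assertion that a degree-$3$ threefold inside an \emph{irreducible} four-dimensional quadric already forces that quadric to be reducible or non-reduced. This is false: the Segre threefold $\PP^1\times\PP^2 \subset \PP^5$ is cut out by the three $2\times 2$ minors of a generic $2\times 3$ matrix of linear forms, and every nonzero linear combination of these minors is an irreducible quadric of rank exactly $4$, so $S$ certainly lies in irreducible quadrics and your degree count does not exclude them. What you are missing is that $Q'$, being a hyperplane section of the \emph{smooth} quadric $Q^5$, has corank at most one: it is either smooth or a cone over a smooth $Q^3$ with a single vertex. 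This is elementary (a hyperplane is either transverse to $Q^5$ or tangent at a unique point) and is precisely what the paper extracts from Theorem~\ref{zaktang}. Once you know this, your union-of-hyperplanes and double-hyperplane sub-cases are vacuous, and the real work is the two cases ${\rm rank}\,Q' \in \{5,6\}$.

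For smooth $Q'$ your parity idea is correct and is what the paper uses: ${\rm Pic}(Q^4)\cong\ZZ\cdot[H]$, so every effective divisor has even degree and $\deg S = 3$ is impossible. For the rank-$5$ cone the paper argues differently: it projects from the vertex $o$ onto the base $Q^3 \subset \PP^4$, notes that among the disjoint $\PP^2$-fibres of $S\to\PP^1$ at most one can pass through $o$, and observes that any other such plane maps isomorphically into $Q^3$ --- impossible since a smooth three-dimensional quadric contains no plane. Your degree argument can be made to work here too, since the projection shows ${\rm Cl}(Q')\cong {\rm Pic}(Q^3)\cong\ZZ\cdot[H]$, but this needs to be said; it is not automatic for an arbitrary irreducible quadric.
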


\begin{proof} Assume that a copy $S$ of the Segre $3$-fold $\PP^1 \times \PP^2$ is contained in $Q^5 \subset \PP^6$. Let $H$ be the linear span of 
$S$ which is a hyperplane of $\PP^6$. Denote by $Y$ the hyperplane section of $Q^5$ by $H$. Then we see that $Y$ is a quadric of dimension $4$. From Theorem~\ref{zaktang}, it follows that the dimension of the singular locus of $Y$ is at most $0$. Therefore $Y$ is smooth or a cone over a smooth quadric hypersurface of dimension $3$. If $Y$ is smooth, then $S$ is a member of the linear system corresponding to the hyperplane section of $Y$. This gives a contradiction. So we may assume that $Y$ is a cone with vertex $o \in \PP^5$. We denote by $\pi: \PP^5 \cdots \rightarrow \PP^4$ the projection from a point $o$. Then $\PP^1 \times \PP^2 \setminus \{o\}$ contains a plane $\PP^2$ and the image $\pi(\PP^2)$ is also of degree $1$. However it contradicts the fact that any $3$-dimensional smooth quadric does not contain a plane.
\end{proof} 

\begin{pro} A $5$-dimensional smooth complete intersection $X$ of two quadrics does not contain a copy of the Segre $3$-fold $\PP^1 \times \PP^2$. 
\end{pro}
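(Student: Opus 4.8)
The plan is to argue by contradiction. Suppose $S\subset X$ is a copy of the Segre $3$-fold $\PP^1\times\PP^2\subset\PP^5$, where $X\subset\PP^7$. Then the linear span $\Lambda:=\langle S\rangle$ is a $\PP^5$, a linear subspace of codimension $r=2$ in $\PP^7$. The first step is to slice $X$ with $\Lambda$. Since $X$ is smooth, $\mathrm{Sing}(X)=\emptyset$, so Theorem~\ref{zaktang} applies with $N=7$, $\dim X=5$, $r=2$: the hypothesis reads $7-5+(-1)+2-1=2<3=\dim X-r$, whence $Y:=X\cap\Lambda$ has dimension exactly $3$; moreover $X$ is locally Cohen--Macaulay and $2\cdot5-7-(-1)-2\cdot2=0\ge0$, so $Y$ is reduced. (Alternatively $\dim Y=3$ follows directly, since the two quadrics defining $X$ restrict to two distinct quadric hypersurfaces of $\Lambda$: no member of the pencil of quadrics through a smooth $X$ contains a $\PP^5$.)

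The second step is a degree count on $Y$. Being the intersection of two quadrics in $\Lambda\cong\PP^5$ of the expected dimension $3$, the scheme $Y$ is a complete intersection, hence pure of dimension $3$ and of degree $4$. As $Y$ is reduced and contains the irreducible threefold $S$, which has degree $3$ in $\PP^5$, we get $Y=S\cup\Pi$ with $\Pi$ pure of dimension $3$ and degree $4-3=1$; thus $\Pi$ is a linear subspace $\PP^3\subset\Lambda\subset X$. So it remains to show that $X$ contains no $\PP^3$.

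For this I would use the structure of the pencil of quadrics $\{Q_t\}_{t\in\PP^1}$ through $X$ (see \cite{R}). Since $X$ is smooth, the pencil has exactly $8$ singular members, each of corank one; after simultaneously diagonalizing the pencil they are the cones $C_0,\dots,C_7$ with vertices the coordinate points $e_0,\dots,e_7$ of $\PP^7$, each $C_i$ being a cone over a smooth quadric $Q^5\subset\PP^6$. Now $\Pi\subset X\subset C_i$ for every $i$. If $e_i\notin\Pi$, then projecting from $e_i$ embeds $\Pi$ as a $\PP^3$ inside the smooth quadric $Q^5$, which is impossible since the largest linear subspace of a smooth $5$-dimensional quadric is a $\PP^2$. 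Hence $e_i\in\Pi$ for every $i$; but $e_0,\dots,e_7$ span $\PP^7$, so they cannot all lie in the $3$-plane $\Pi$. This contradiction finishes the proof.

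I expect the last step to be the real difficulty: excluding a $\PP^3\subset X$ is not a consequence of Proposition~\ref{Zak} (which only gives dimension $\le 3$) nor of Proposition~\ref{reid}, so one genuinely needs the classification of pencils of quadrics with smooth base — specifically, that every singular member then has corank exactly one and that the vertices of the resulting cones are in linearly general position. The first two steps are routine once Theorem~\ref{zaktang} is at hand.
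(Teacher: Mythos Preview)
Your argument is correct and follows the same route as the paper: intersect with the span $\Lambda=\langle S\rangle$, use Theorem~\ref{zaktang} to get a reduced $3$-fold of degree $4$, split off $S$ to find a residual $\PP^3\subset X$, and derive a contradiction. The only difference is in the last step: the paper disposes of the $\PP^3$ by a direct citation to \cite[Corollary~2.4]{R}, whereas you reprove that fact via the corank-one singular members of the pencil and their vertices. Your argument for this is valid (and is essentially how Reid proves it), so what you flagged as ``the real difficulty'' is in fact already available as a black-box citation from the same reference you invoke for the pencil structure.
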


\begin{proof} Suppose that a copy $S$ of the Segre $3$-fold $\PP^1 \times \PP^2$ is contained in $X \subset \PP^7$. Denote by $H$ the linear span of $S$. According to Theorem~\ref{zaktang}, we see that $\dim (X \cap H)=3$ and $X \cap H$ is reduced. Since $X \cap H$ contains $S$, we obtain a $3$-dimensional linear subspace $L$ such that $X \cap H= S \cup L$. However this contradicts the fact that $X$ does not contain a $3$-dimensional linear subspace (see \cite[Corollary~2.4]{R}).
\end{proof}

\subsection{Cases $\rm (i), (v)$ and $\rm (vi)$}
If $\varphi_{\K_1}$ is a linear $\PP^d$-bundle with $d \geq 2$, there is nothing to prove. So we deal with the cases $\rm (v)$ and $\rm (vi)$.

\begin{pro}\label{fb}  Assume that $\varphi_{\K_1}:X \rightarrow Y$ is $\varphi$ as in $\rm (v)$ of Theorem~\ref{plane}. Then $X$ is $\PP^1 \times Q^4$.
\end{pro}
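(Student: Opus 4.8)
In case (v) of Theorem~\ref{plane} we have a contraction $\varphi_{\K_1}:X\to Y$ with $Y$ a smooth curve, $\E:=(\varphi_{\K_1})_\ast\mathscr{O}_X(1)$ locally free of rank $6$, and $X$ embedded over $Y$ into $\PP(\E)$ as a divisor of relative degree $2$; a general fiber of $\varphi_{\K_1}$ is a smooth quadric $Q^4$. I have at my disposal the second covering family $\V_1$ from Proposition~\ref{summing}, which spans an extremal ray of $NE(X)$ distinct from $\RR_{\ge0}[\K_1]$; so $\rho_X\ge 2$.

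\begin{proof} Since a general fiber $F\cong Q^4$ of $\varphi_{\K_1}$ is a $4$-dimensional smooth quadric, the families $\K_1$ and $\V_1$ restrict to $F$. Recall the construction of $\V_1$: through a general $x\in X$ there is a copy $S_x\subset X$ of $\PP^1\times\PP^2$ with $[S_x]\in\H_3^0$, and $\varphi_{\K_1}$ contracts the lines of $S_x$ of the second type while $\varphi_{\V_1}$ contracts those of the first type. A general such $S_x$ lies in a general fiber $F\cong Q^4$ of $\varphi_{\K_1}$: indeed, every plane on $S_x$ is contracted by $\varphi_{\K_1}$ (Lemma~\ref{planecontract}) and hence so is every curve on $S_x$, so $S_x$ sits inside a fiber of $\varphi_{\K_1}$; since $S_x$ is three-dimensional and $F$ is four-dimensional and irreducible, $S_x$ is a hyperplane section of $Q^4\cong F$ after restricting the Pl\"ucker embedding. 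In $Q^4\subset\PP^5$ the copies of $\PP^1\times\PP^2$ are exactly the hyperplane sections that are cones-free, i.e. the smooth hyperplane sections $Q^3$ — wait, these are quadric threefolds, not Segre threefolds. This forces a re-examination: a smooth $Q^4$ contains no copy of $\PP^1\times\PP^2$ at all (by the argument already used for $Q^5$ in the previous proposition, since $\PP^1\times\PP^2$ has degree $3$ while hyperplane sections of $Q^4$ have degree $2$). Therefore the general $S_x$ does \emph{not} lie in a fiber of $\varphi_{\K_1}$, and consequently $\V_1$-lines are \emph{not} contracted by $\varphi_{\K_1}$: the two extremal rays $\RR_{\ge0}[\K_1]$, $\RR_{\ge0}[\V_1]$ are genuinely independent, confirming $\rho_X\ge2$.

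The plan is now to analyze the contraction $\varphi_{\V_1}:X\to Y'$. First I claim $\varphi_{\V_1}$ is of fibering type with $1$-dimensional general fiber isomorphic to $\PP^1$: a general $\V_1$-line $l$ of the first type on $S_x\cong\PP^1\times\PP^2$ generates, together with the lines of the second type, the full $2$-dimensional cone of curves of $S_x$; since the lines of the first type sweep out $S_x$ and their deformations inside $X$ form the family $\V_1$, the variety of minimal rational tangents $\C_x^{\V_1}$ has dimension $-K_X.\V_1-2$. Computing $-K_X.\V_1$: a line $l$ of the first type on $S=\PP^1\times\PP^2$ has $N_{l/S}\cong\mathscr{O}_{\PP^1}(1)$, so $-K_S.l=2$; since $S$ is a divisor in $F$ with $N_{S/F}$ of relative degree $1$ along such $l$ (as $S$ is a hyperplane section of... ) one gets $-K_X.l$. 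The cleanest route: use that $\K_1$ and $\V_1$ restricted to a general $S_x$ recover the two rulings, and intersect with $H$. One finds $-K_X.\V_1=3$, hence $\dim\C_x^{\V_1}=1$ and the $\varphi_{\V_1}$-fibers have a one-dimensional VMRT, forcing a general fiber $\cong\PP^1$ by the same Fano-index analysis as in Lemma~\ref{contractions} and Theorem~\ref{KO}. By Proposition~\ref{fujita}, once every fiber of $\varphi_{\V_1}$ is $1$-dimensional, $Y'$ is smooth and $\varphi_{\V_1}$ makes $X$ a $\PP^1$-bundle $\PP(\mathscr{G})$ over $Y'$, with $\dim Y'=4$. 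The hard part here is excluding jumping fibers of dimension $\ge2$; this should follow exactly as in the $\PP^2$-bundle argument of Proposition~\ref{P2}, slicing by general hyperplanes and invoking the $3$-fold extremal-contraction classification, or more directly from Proposition~\ref{Wis} applied to $R=\RR_{\ge0}[\V_1]$ with $l(R)=3$.

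Now $X\cong\PP_{Y'}(\mathscr{G})$ with $Y'$ a smooth projective fourfold, and $X$ also carries the fibration $\varphi_{\K_1}$ to a curve $Y$ whose general fiber is $Q^4$. I would finish as follows: the two contractions, being the two extremal rays of $\overline{NE}(X)$ with $\rho_X=2$, give $\rho_X=2$ exactly, so $\rho_{Y'}=1$; moreover the general $\varphi_{\K_1}$-fiber $Q^4$ maps isomorphically onto $Y'$ under $\varphi_{\V_1}$ (it is a section-like image: $Q^4\to Y'$ is finite since $\V_1$-lines are not in $\varphi_{\K_1}$-fibers, and birational by dimension count, hence an isomorphism as $Q^4$ is normal and $Y'$ smooth). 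Thus $Y'\cong Q^4$. Then $X\cong\PP_{Q^4}(\mathscr{G})$ for a rank-$2$ bundle $\mathscr{G}$ on $Q^4$, and the second fibration $\varphi_{\K_1}:X\to Y$ to a curve with fiber $Q^4$ forces, by the rigidity of $Q^4$ and $\rho_X=2$, that $\mathscr{G}$ splits as $\mathscr{O}_{Q^4}\oplus\mathscr{O}_{Q^4}(a)$ and in fact $\mathscr{G}\cong\mathscr{O}_{Q^4}^{\oplus2}$ up to twist; equivalently $X\cong\PP^1\times Q^4$. Checking that the polarization matches $\PP^1\times Q^4\subset\PP^{11}$ (Segre of $\mathscr{O}_{\PP^1}(1)\boxtimes\mathscr{O}_{Q^4}(1)$, giving $N=2\cdot 6-1=11$) completes the proof. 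The main obstacle is the product-structure step: ruling out a nontrivial $\PP^1$-bundle over $Q^4$, which I expect to handle via the constraint that $X$ must simultaneously fiber over a curve with $Q^4$-fibers, together with $\mathrm{Pic}(X)=\ZZ H\oplus\ZZ(\text{fiber})$ and the classification of extremal contractions. \end{proof}
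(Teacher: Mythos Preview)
Your overall strategy---show $\varphi_{\V_1}$ is a $\PP^1$-bundle over a smooth fourfold $Z$, identify $Z$, then pin down the bundle---matches the paper's, but several of your key steps contain genuine errors.

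\textbf{The value of $-K_X.\V_1$.} You assert $-K_X.\V_1=3$ and base both the VMRT computation and the appeal to Proposition~\ref{Wis} (with $l(R)=3$) on this. In the target $X=\PP^1\times Q^4$ one has $-K_X=2H_1+4H_2$ and a first-type line $l=\PP^1\times\{\text{pt}\}$ satisfies $-K_X.l=2$, not $3$. With $l(R)=2$ Wi\'sniewski's inequality gives only $\dim F\ge 1$, so it does \emph{not} exclude jumping fibers. Your fallback ``exactly as in Proposition~\ref{P2}'' does not apply either: that argument used $m=3$ crucially to make $-K_{X'}$ $\phi$-ample after slicing.

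\textbf{Equidimensionality of $\varphi_{\V_1}$.} The paper's argument is cleaner and works: since $\RR_{\ge0}[\K_1]\neq\RR_{\ge0}[\V_1]$, the intersection of any $\varphi_{\K_1}$-fiber with any $\varphi_{\V_1}$-fiber contains no curve (such a curve would lie in both rays). Hence $\varphi_{\K_1}$ restricted to any $\varphi_{\V_1}$-fiber $G$ is finite onto its image in the curve $Y$, so $\dim G\le 1$; and $\dim G\ge1$ since $\varphi_{\V_1}$ contracts lines. Then Proposition~\ref{fujita} gives the $\PP^1$-bundle structure.

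\textbf{Identifying $Z$.} Your claim that $\varphi_{\V_1}|_F:Q^4\to Z$ is ``birational by dimension count'' is unjustified: a finite surjective morphism between smooth fourfolds can have any degree. The paper instead invokes \cite[Proposition~8]{PS} (Paranjape--Srinivas): a finite surjective map from $Q^4$ to a smooth variety forces the target to be $\PP^4$ or $Q^4$. Both possibilities must be carried forward.

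\textbf{The product step.} You acknowledge this as ``the main obstacle'' and leave it to vague rigidity considerations. The paper argues that $\rho_X=2$ and both contractions are $K_X$-negative, so $-K_X$ is ample; thus $X=\PP_Z(\mathscr{G})$ with $\mathscr{G}$ a rank-$2$ Fano bundle on $\PP^4$ or $Q^4$. The classification of such bundles in \cite[Main Theorem~2.4]{APW}, combined with the existence of a second extremal contraction onto a \emph{curve}, forces $X\cong\PP^1\times Q^4$. Without invoking this classification (or an equivalent), your sketch does not close.
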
 

\begin{proof} 
Because the Segre $3$-fold $\PP^1 \times \PP^2 \subset \PP^5$ is not contained in any smooth quadric $4$-fold, we see that $\RR_{\geq 0}[\K_1] \neq \RR_{\geq 0}[\V_1]$. 
Furthermore, the dimension of every fiber of $\varphi_{\K_1}$ is $4$.
Hence every fiber of $\varphi _{\V_1}$ is of dimension $1$ and then we see that a general fiber of $\varphi _{\V_1}$ is a $\V_1$-line. 
Hence it follows from Proposition~\ref{fujita} that $\varphi _{\V_1}: X \rightarrow Z$ is a linear $\PP^1$-bundle over a smooth variety $Z$. 

Here we show that $Z$ is isomorphic to $\PP^4$ or $Q^4$. 
Let $F$ be a general fiber of $\varphi _{\K_1}$, which is a smooth quadric of dimension $4$. Since $F$ is not contracted to a point by $\varphi _{\V_1}$, the dimension of $\varphi _{\V_1}(F)$ is $4$. It implies that $\varphi _{\V_1}: F \rightarrow Z$ is a finite surjective morphism. According to \cite[Proposition~8]{PS}, $Z$ is isomorphic to $\PP^4$ or $Q^4$. 

Since $\rho_X=2$, the Kleiman-Mori cone $NE(X)$ is generated by $\RR_{\geq 0}[\K_1]$ and $\RR_{\geq 0}[\V_1]$. This implies that $-K_X$ is ample. Hence $X$ is the projectivization of a rank two Fano bundle $\E$ on $\PP^4$ or $Q^4$. It follows that $X$ is isomorphic to $\PP^1 \times Q^4$ from the classification result of rank two Fano bundles in \cite[Main Theorem~2.4]{APW}, since $X$ has a contraction of an extremal ray onto a curve. Consequently, $X$ is projectively equivalent to $\PP^1 \times Q^4 \subset \PP^{11}$. 

\end{proof}

\begin{pro}\label{segre4} Assume that $\varphi_{\K_1}:X \rightarrow Y$ is $\varphi$ as in $\rm (vi)$ of Theorem~\ref{plane}. Then $\varphi _{\K_1} :X \rightarrow C$ is a contraction of an extremal ray to a smooth curve whose general fiber is projectively equivalent to the Segre $4$-fold $\PP^2 \times \PP^2 \subset \PP^8$.
\end{pro}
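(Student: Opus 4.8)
The part of the assertion saying that $\varphi_{\K_1}$ is the contraction of an extremal ray onto a smooth curve $C$ whose general fibre is abstractly isomorphic to $\PP^2\times\PP^2$ is exactly the hypothesis, namely that we are in case $\mathrm{(vi)}$ of Theorem~\ref{plane}. So the plan is to determine the induced polarization on a general fibre, i.e. to compute $H|_F$, where $H$ is a hyperplane section of $X\subset\PP^N$ and $F$ is a general fibre of $\varphi_{\K_1}$, and then to read off the projective embedding.

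First I would set $m:=-K_X.\K_1$. Since, as observed just before Section~\ref{planetwo}, we may regard $\varphi_{\K_1}$ as the contraction $\varphi_{\K}$ studied there, Lemma~\ref{contractions}~$\mathrm{(ii)}$ gives that $F$ is a Fano manifold with $r_F\geq m\geq 3$. On the other hand $F\cong\PP^2\times\PP^2$ has index $r_F=3$, because $-K_F=\O_{\PP^2\times\PP^2}(3,3)=3\,\O_{\PP^2\times\PP^2}(1,1)$ and $\O_{\PP^2\times\PP^2}(1,1)$ generates the relevant part of the ample cone. Hence $m=3$.

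Next, by construction $L:=K_X+mH$ is the pull-back via $\varphi_{\K_1}$ of a divisor on $C$, so $L|_F$ is numerically trivial and therefore $-K_F=-K_X|_F=m\,H|_F=3\,H|_F$. Comparing with $-K_F=\O_{\PP^2\times\PP^2}(3,3)$ and using that $\mathrm{Pic}(\PP^2\times\PP^2)\cong\ZZ^{\oplus 2}$ is torsion-free, I get $H|_F\cong\O_{\PP^2\times\PP^2}(1,1)$; that is, $(F,H|_F)\cong(\PP^2\times\PP^2,\O_{\PP^2\times\PP^2}(1,1))$.

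Finally I would note that $h^0(\PP^2\times\PP^2,\O(1,1))=9$ and that the secant variety of the Segre fourfold $\PP^2\times\PP^2\subset\PP^8$ equals $\PP^8$; consequently no non-trivial linear projection of the Segre fourfold is an embedding, so the morphism $F\hookrightarrow\PP^N$ cut out by $|H|$ is necessarily given by the complete linear system $|H|_F|$. Thus $F$ spans an $8$-dimensional linear subspace of $\PP^N$ on which it is projectively equivalent to the Segre fourfold $\PP^2\times\PP^2\subset\PP^8$, as claimed. The whole argument is short; the only point requiring any care is this last step, ruling out that $F$ could be embedded by a proper sub-system of $|\O(1,1)|$, which is precisely what the (non-defectivity and) dimension count for the secant variety of the Segre fourfold provides.
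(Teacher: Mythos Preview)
Your route to identifying the polarization on a general fibre differs from the paper's, but is correct: the paper argues that a general fibre $F$ contains a copy of the Segre $3$-fold $\PP^1\times\PP^2\subset\PP^5$ and reads off $H|_F$ from that inclusion, whereas you squeeze $m$ between $3$ and $r_F=3$ and then divide $-K_F=3H|_F$ in the torsion-free Picard group. Either way one obtains $(F,H|_F)\cong(\PP^2\times\PP^2,\O_{\PP^2\times\PP^2}(1,1))$, which is the substance of the proposition.

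Your final step, however, contains a genuine factual error. The secant variety of the Segre fourfold $\PP^2\times\PP^2\subset\PP^8$ is \emph{not} all of $\PP^8$: identifying $\PP^8$ with $3\times 3$ matrices up to scalars, the Segre variety is the locus of rank~$1$ matrices and its (second) secant variety is the locus of matrices of rank at most~$2$, i.e.\ the determinantal cubic hypersurface $\{\det=0\}$, which has dimension~$7$. Hence there \emph{do} exist isomorphic linear projections of $\PP^2\times\PP^2$ into $\PP^7$, and your argument for linear normality of $F$ in $\PP^N$ collapses. Note that the paper's one-line proof does not address linear normality either; the phrase ``projectively equivalent to the Segre $4$-fold $\PP^2\times\PP^2\subset\PP^8$'' is to be understood as the identification $(F,H|_F)\cong(\PP^2\times\PP^2,\O(1,1))$, which your first two steps already establish. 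So drop the secant-variety paragraph, or replace it with the honest remark that the embedding of $F$ factors through the Segre map $\PP^2\times\PP^2\hookrightarrow\PP^8$ followed by a linear projection that is an isomorphism onto its image.
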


\begin{proof} Since a general fiber contains a copy of the Segre $3$-fold $\PP^1 \times \PP^2$, it is easy to see that a general fiber is projectively equivalent to the Segre $4$-fold $\PP^2 \times \PP^2 \subset \PP^8$.
\end{proof}

Hence we have shown Theorem~\ref{Segre3}.

\begin{rem}\rm  An example of Theorem~\ref{Segre3}~{\rm (iv)} is given in \cite[Example~3.6]{NO}.
\end{rem}

\end{document}